\numberwithin{equation}{section}
\newtheoremstyle{theoremstyle}
  {10pt}      
  {5pt}       
  {\itshape}  
  {}          
  {\bfseries} 
  {:}         
  {.5em}      
  {}          
\newtheoremstyle{examplestyle}
  {10pt}      
  {5pt}       
  {}          
  {}          
  {\bfseries} 
  {:}         
  {.5em}      
  {}          
\theoremstyle{theoremstyle}
\newtheorem{theorem}{Theorem}[section]
\newtheorem*{theorem*}{Theorem}
\newtheorem{lemma}[theorem]{Lemma}
\newtheorem*{proposition*}{Proposition}
\newtheorem*{corollary*}{Corollary}
\theoremstyle{examplestyle}
\newtheorem{definition*}{Definition}
\newtheorem{remark}[theorem]{Remark}
\newtheorem{remark*}{Remark}
\newtheorem{convention*}{Convention}
\newtheorem{notation}[theorem]{Notation}
\newtheorem{notation*}{Notation}
\newtheorem{question*}{Question}
\newtheorem*{tha*}{Theorem A}
\newtheorem*{thb*}{Theorem B}
\newtheorem*{thc*}{Theorem C}
\newtheorem*{thd*}{Theorem D}
\newtheorem*{the*}{Theorem E}
\newtheorem*{thf*}{Theorem F}
\newtheorem*{thg*}{Theorem G}
\newtheorem*{thh*}{Theorem H}
\newcommand{\comment}[1]{}
\newcommand{\I}{\mbox{I}}
\newcommand{\x}{\negmedspace}
\newcommand{\N}{\mathbb{N}}
\newcommand{\Z}{\mathbb{Z}}
\newcommand{\R}{\mathbb{R}}
\newcommand{\Sph}{\mathbb{S}}
\newcommand{\bmat}{\left(\begin{smallmatrix}}
\newcommand{\emat}{\end{smallmatrix}\right)}
\newcommand{\SO}{\mathrm{SO}}
\newcommand{\Gtwo}{\mathrm{G}_2}
\begin{document}

\title[Infinite families of harmonic self-maps of spheres]{Infinite families of harmonic self-maps of spheres}
\author{Anna Siffert$^1$}%
\footnotetext[1]{
I would like to thank the Max Planck Institute for Mathematics for the support and for providing excellent working conditions. Furthermore, 
I would like to thank Deutsche Forschungsgemeinschaft for supporting me with the grant SI 2077/1-1 (Forschungsstipendium) while parts of this work were done. }
\subjclass[2010]{Primary 58E20; Secondary 34B15, 55M25}%
\address{Max Planck Institute for Mathematics\\
Vivatsgasse 7\\
53111 Bonn\\
Germany}
\email{siffert@mpim-bonn.mpg.de}

\begin{abstract}
For each of the spheres $\Sph^{n}$, $n\geq 5$, we construct a new infinite family of harmonic self-maps, and prove that their members have Brouwer degree $\pm1$ or $\pm3$. 
These self-maps are obtained by solving a singular boundary value problem. 
As an application we show that for each of the special orthogonal groups $\SO(4),\SO(5),\SO(6)$ and $\SO(7)$ there exists two infinite families of harmonic self-maps.
\end{abstract}

\maketitle

\section{Introduction}
Let $\varphi:(M,g)\rightarrow (N,h)$ be a smooth map between Riemannian manifolds and $U$ a domain of $M$ with piecewise $C^1$ boundary.
The energy functional of $\varphi$ over $U$ is given by $$E_U(\varphi)=\int_U\lvert d\varphi\rvert^2\omega_g.$$
A smooth map $f:M\rightarrow N$ is called harmonic if it is a critical point of the energy functional.
For the special case $M=N=\Sph^{n}$, where $\Sph^{n}$ is equipped with the standard metric, the Euler-Lagrange equations of the energy functional are given by the elliptic system
$$\Delta f+\lvert df\lvert^2f=0,$$
where $\Delta$ denotes the Laplace-Beltrami operator for the sphere $\Sph^{n}$.
Finding solutions of this partial differential equation is difficult in general.
By imposing symmetry conditions on the solution one can sometimes reduce this problem to finding solutions of an ordinary differential equation.

\smallskip

In this paper we restrict ourselves to self-maps of spheres which are equivariant with respect to the cohomogeneity one action
\begin{align*}
{\SO(m_0+1)\times\SO(m_1+1)}\times\Sph^{m_0+m_1+1}\rightarrow\Sph^{m_0+m_1+1},\hspace{1cm}(A,B,v)\mapsto  \left(\begin{array}{cc} A&0\\
      0&B\end{array}\right)v.
\end{align*}
In this case the Euler Lagrange equations reduce to the singular ordinary differential equation
\begin{align*}
\ddot r(t)=\left((m_1\!-\!m_0)\csc2t-(m_0\!+\!m_1)\cot2t\right)\dot r(t)-m_1\tfrac{\sin2r(t)}{2\cos^2t}+m_0\tfrac{\sin2r(t)}{2\sin^2t}.
\end{align*}
It was shown in \cite{ps} that each solution of this ordinary differential equation which satisfies
$r(0)=0$ and $r(\tfrac{\pi}{2})=(2\ell+1)\tfrac{\pi}{2}, \ell\in\Z$, yields a harmonic self-map of $\Sph^{m_0+m_1+1}$.
The above ordinary differential equation and boundary value problem are henceforth referred to as $(m_0,m_1)$-ODE and $(m_0,m_1)$-BVP, respectively. 

\vspace{0.5cm}

The goal of this paper is the construction of solutions of the $(m_0,m_1)$-BVP and the examination of their properties.

\paragraph{Initial value problem}
In order to find solutions of the $(m_0,m_1)$-BVP we use a shooting method at the degenerate point $t=0$. 
This is possible since for each $v\in\R$ there exists a unique solution $r_v$ of the $(m_0,m_1)$-ODE with $r(0)=0$ and $\dot r(0)=v$. This initial value problem is solved in Section\,\ref{sec2}.

\paragraph{The cases $2\leq m_0\leq 5$}
We show that for $2\leq m_0\leq 5$ there exist infinitely many solutions of the $(m_0,m_1)$-BVP.
These solutions are labeled by the number of intersections of $r$ and $\tfrac{\pi}{2}$, the so-called \textit{nodal number}.

\begin{tha*}
\label{inf}
Let $2\leq m_0\leq 5$ and $m_0\leq m_1$. For each $k\in\N$ there exists a solution of the $(m_0,m_1)$-BVP with nodal number $k$.
\end{tha*}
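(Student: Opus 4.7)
The plan is to apply a shooting method based on the initial value problem solved in Section~\ref{sec2}. For each $v\in\R$, let $r_v$ denote the unique solution of the $(m_0,m_1)$-ODE with $r_v(0)=0$ and $\dot r_v(0)=v$, and define the nodal number $N(v)$ to be the number of points $t\in(0,\pi/2)$ at which $r_v(t)=\pi/2$. The goal is to produce, for each $k\in\N$, a parameter $v_k$ such that $r_{v_k}$ extends continuously to $t=\pi/2$, takes the value $(2\ell+1)\pi/2$ there for some $\ell\in\Z$, and satisfies $N(v_k)=k$.

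First I would establish the basic continuity framework: solutions $r_v$ depend continuously on $v$ on compact subintervals of $[0,\pi/2)$; crossings of the equator are transversal, since $r\equiv\pi/2$ is itself a solution and two orbits can only meet tangentially by uniqueness, which would force coincidence; and the terminal map $v\mapsto r_v(\pi/2)$ is continuous on the open set where the solution extends to the boundary. Next I would analyse the equator $r\equiv\pi/2$ as an equilibrium of the $(m_0,m_1)$-ODE. Writing $r=\pi/2+s$, its linearization is
\[
\ddot s=\bigl((m_1-m_0)\csc 2t-(m_0+m_1)\cot 2t\bigr)\dot s+\Bigl(\tfrac{m_1}{\cos^2 t}-\tfrac{m_0}{\sin^2 t}\Bigr)s,
\]
and a Frobenius/indicial analysis at the singular point $t=\pi/2$ shows that solutions spiralling into the equator oscillate infinitely often precisely in the range $2\le m_0\le 5$; this is where the dimension hypothesis of the theorem enters.

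Using this oscillatory behaviour I would then exhibit a distinguished shooting parameter $v_*$ for which $r_{v_*}(t)\to \pi/2$ while crossing the equator infinitely often, so that $N(v)\to\infty$ along a sequence $v\to v_*$. For $v$ small the solution $r_v$ is monotone and small, giving $N(v)=0$, while the preceding continuity/compactness argument shows that $N$ can only change at parameters where $r_v$ touches $\pi/2$ tangentially or runs off to infinity. An intermediate-value argument therefore identifies, for each $k\in\N$, a parameter interval on which $N\equiv k$ and across which $r_v(\pi/2)$ sweeps through an odd multiple of $\pi/2$; this yields a solution of the $(m_0,m_1)$-BVP with nodal number exactly $k$.

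I expect the main obstacle to be the oscillation count near the equator, namely the quantitative statements that $N(v)\to\infty$ along a suitable approach to $v_*$ and that the nonlinear solution $r_v$ really inherits the oscillation predicted by the linearization. This requires the indicial analysis of the singular linearized equation at $t=\pi/2$ (which is sensitive to the precise value of $m_0$ and produces the cutoff $m_0\le 5$) together with a careful Gr\"onwall/phase-plane comparison to pass from the linearized oscillation to the true one; a secondary technical point is that the constraint $m_0\ge 2$ must be used at the other singular end $t=0$ to guarantee that the shooting family $\{r_v\}$ is parametrised smoothly by $v$ in a neighbourhood of $v=0$.
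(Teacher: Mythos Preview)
Your overall shooting strategy is the right one, and the paper follows the same outline: define $v_k=\sup\{v:\frak{N}(r_v)=k\}$, show these suprema are finite, and argue that each $r_{v_k}$ is a genuine solution of the $(m_0,m_1)$-BVP because new nodes can only be born at the endpoint. But the heart of the argument --- the step where the hypothesis $2\le m_0\le 5$ enters --- is misplaced in your proposal, and as written it would not go through.

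You locate the oscillation mechanism at $t=\pi/2$. Linearising the $(m_0,m_1)$-ODE around $r=\pi/2$ and passing to the asymptotic equation at $x\to+\infty$ (equivalently $t\to\pi/2$) gives
\[
s''-(m_1-1)s'-m_1 s=0,
\]
whose characteristic roots are real for every $m_1\ge 1$; the equilibrium $r=\pi/2$ is a \emph{saddle} at that end, so solutions approach it monotonically (cf.\ Theorem~\ref{gwert}). In particular there is no finite $v_*$ for which $r_{v_*}$ crosses $\pi/2$ infinitely often; Lemma~\ref{streifen} in fact shows every $r_v$ has finite nodal number. So the distinguished parameter $v_*$ you want does not exist, and the indicial analysis at $t=\pi/2$ cannot produce the cutoff $m_0\le 5$ (it would constrain $m_1$, if anything).

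The correct place for the oscillation analysis is the \emph{other} singular end $t=0$ (i.e.\ $x\to-\infty$), where the linearisation around $r=\pi/2$ reads
\[
s''+(m_0-1)s'+m_0 s=0,
\]
with discriminant $(m_0-1)^2-4m_0=m_0^2-6m_0+1$, negative exactly for $m_0\in\{1,\dots,5\}$. The mechanism is then $v\to\infty$ rather than $v\to v_*$: after the rescaling $x\mapsto x-\log v$ the solutions converge to a fixed orbit of the autonomous limit equation that spirals into $\pi/2$, and this forces $\frak{N}(r_v)\to\infty$ as $v\to\infty$. This is precisely Lemma~\ref{nullstellen}, which the paper imports from Gastel. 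Once you have that, your intermediate-value argument is essentially the paper's proof of Theorem~\ref{infam}, with Lemma~\ref{streifen} supplying the control (nodes appear one at a time, at infinity) that makes the induction on $k$ work.

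A minor correction: the constraint $m_0\ge 2$ is not what makes the shooting family smooth at $t=0$ --- Lemma~\ref{asy} gives that for all $m_0\ge 1$. The lower bound $m_0\ge 2$ is used instead in the a~priori estimates (e.g.\ Theorem~\ref{streifenneg}) that confine $r$ near the left endpoint.
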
 

For the special case that the multiplicities coincide, reflecting a solution of the $(m,m)$-BVP  on the point $(\tfrac{\pi}{4},\tfrac{\pi}{4})$ yields again a solution of the $(m,m)$-BVP.
We use this fact to show that for $2\leq m\leq 5$ there exist infinitely many solutions of the $(m,m)$-BVP with nodal number $0$.

\begin{thb*}
\label{nodal0}
If $m_0=m_1=:m$ and $2\leq m\leq 5$ there exists a countably infinite family of solutions of the $(m,m)$-BVP with nodal number $0$.
\end{thb*}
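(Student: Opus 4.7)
The plan is to combine Theorem~A with the point-reflection symmetry of the $(m,m)$-ODE. First I would verify that the reflection $\sigma: r(t) \mapsto \tfrac{\pi}{2} - r(\tfrac{\pi}{2} - t)$ about the point $(\tfrac{\pi}{4}, \tfrac{\pi}{4})$ maps $(m,m)$-ODE solutions to solutions: under $t \mapsto \tfrac{\pi}{2} - t$ the coefficient $\cot(2t)$ changes sign, the quantities $\sin^2 t$ and $\cos^2 t$ are exchanged, and $\sin(2r) \mapsto \sin(\pi - 2r) = \sin(2r)$; together with the sign flip in $\ddot r$, these changes preserve the ODE. Since $\sigma r(0) = \tfrac{\pi}{2} - r(\tfrac{\pi}{2})$, the map $\sigma$ permutes those BVP solutions satisfying $r(\tfrac{\pi}{2}) = \tfrac{\pi}{2}$, i.e.\ the $\ell = 0$ branch. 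Moreover, $\sigma r(t) = \tfrac{\pi}{2}$ is equivalent to $r(\tfrac{\pi}{2} - t) = 0$, so the nodal number of $\sigma r$ equals the number of interior zeros of $r$ on $(0, \tfrac{\pi}{2})$.

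Next I would invoke Theorem~A in the symmetric case $m_0 = m_1 = m$ to produce, for every $k \in \N$, a solution $\rho_k$ of the $(m,m)$-BVP with nodal number $k$. Restricting to the $\ell = 0$ branch (the natural one produced by the shooting method of Section~\ref{sec2} with $\dot r(0) > 0$), one has $\rho_k(0) = 0$ and $\rho_k(\tfrac{\pi}{2}) = \tfrac{\pi}{2}$. Set $\tilde\rho_k := \sigma \rho_k$. By the identity above, the nodal number of $\tilde\rho_k$ equals the number of interior zeros of $\rho_k$. Assuming $\rho_k > 0$ on $(0, \tfrac{\pi}{2})$ for every $k$, each $\tilde\rho_k$ has nodal number $0$. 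Moreover, the map $t \leftrightarrow \tfrac{\pi}{2} - t$ puts the $k$ interior $\tfrac{\pi}{2}$-crossings of $\rho_k$ in bijection with $k$ interior zeros of $\tilde\rho_k$, so the $\tilde\rho_k$ are pairwise distinct, yielding a countably infinite family of nodal-$0$ BVP solutions.

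The main obstacle is establishing $\rho_k > 0$ on $(0, \tfrac{\pi}{2})$ for every $k$. For small $k$ this is evident from the monotone character of $\rho_k$, but for large $k$ one must show that a solution which oscillates many times about the equilibrium $r = \tfrac{\pi}{2}$ never reaches the equilibrium $r = 0$. I expect this to follow from the phase-plane analysis underlying the proof of Theorem~A: the shooting construction realizes $\rho_k$ as an IVP solution $r_{v_k}$ with initial slope $v_k > 0$, and for $2 \leq m \leq 5$ the trajectory winds around $r = \tfrac{\pi}{2}$ within a strip bounded uniformly away from the equilibria $r = 0$ and $r = \pi$, preventing any interior zero of $\rho_k$. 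Given this positivity, the reflection argument above completes the proof.
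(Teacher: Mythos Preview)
Your overall skeleton matches the paper: reflect the family from Theorem~A about $(\tfrac{\pi}{4},\tfrac{\pi}{4})$ and show the reflected solutions have nodal number~$0$. The symmetry check and the identification of $\frak{N}(\sigma\rho_k)$ with the number of interior zeros of $\rho_k$ are both correct. But the step you flag as ``the main obstacle'' is a genuine gap, and it is exactly where the paper's proof diverges from yours.

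First, a smaller point: you assume $\rho_k(\tfrac{\pi}{2})=\tfrac{\pi}{2}$, i.e.\ the $\ell=0$ branch. Theorem~A does not assert this; Theorem~E only gives $\ell\in\{-1,0,1\}$. The paper sidesteps this by using the full symmetry $r\mapsto\tfrac{(2k+1)\pi}{2}-r(-x)$ with $k$ adjusted accordingly.

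The real issue is your positivity claim $\rho_k>0$ on $(0,\tfrac{\pi}{2})$. The phase-plane heuristic you give (``winds around $r=\tfrac{\pi}{2}$ within a strip bounded uniformly away from $r=0$ and $r=\pi$'') is not something established anywhere in the paper. Theorem~\ref{streifenneg} only controls $r$ for $x\le d^-_{m,m}$, and nothing in Section~\ref{sec3} bounds $r$ away from $0$ on the transition region $[d^-_{m,m},d^+_{m,m}]$ for arbitrary nodal number $k$.

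The paper does \emph{not} prove positivity directly. Instead it establishes Theorem~D (the limiting configuration): for every $\epsilon>0$ there is $v_0$ such that $|r_v(x)-\tfrac{\pi}{2}|<\epsilon$ for all $x\ge d^-_{m,m}$ whenever $v\ge v_0$. This is a much stronger statement, proved via the Lyapunov functions $V^r_{m,m}$, $W^r_{m,m}$ and the distance estimate of Lemma~\ref{stayclose}. After reflection one gets $|s_v(x)|<\epsilon$ for $x\le d^+_{m,m}$, and then the stripe Lemma~\ref{streifen} forces $\frak{N}(s_v)=0$. The final step is to show (via $a_k\to\infty$ in the proof of Theorem~\ref{nz}) that infinitely many of the $v_k$ from Theorem~A lie above $v_0$. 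So the paper only uses the \emph{tail} of the family from Theorem~A, where Theorem~D applies; it does not claim or need positivity for small~$k$.
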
 

Theorem\,A and B are proved in Section\,\ref{sec3} and Section\,\ref{sec4}, respectively.

\paragraph{The cases $m_0\geq 6$}
We explain why for $m_0\geq 6$ a construction analogous to that for the cases $2\leq m_0\leq 5$ is not possible.
The reason is simply that for $m_0\geq 6$ the nodal number is bounded from above.

 \begin{thc*}
Let $r_v$ be the solution of the $(m_0,m_1)$-ODE with initial values $r(0)=0$ and $\dot r(0)=v$. 
For $m_0\geq6$ the nodal number of $r_v$, $v\in\R$, is bounded from above by a constant which only depends on $m_0$ and $m_1$.
\end{thc*}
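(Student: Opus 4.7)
\bigskip

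\noindent\textbf{Proof proposal.} Substitute $u(t) := r(t) - \pi/2$; then the nodal number of $r$ is the number of zeros of $u$ in $(0, \pi/2)$. Since $\sin 2r = -\sin 2u$, the $(m_0, m_1)$-ODE becomes
\[
\ddot u + p(t)\, \dot u + \sigma(u)\, q(t)\, u = 0,
\]
with $p(t) := m_0 \cot t - m_1 \tan t$, $q(t) := m_0 \csc^2 t - m_1 \sec^2 t$, and $\sigma(u) := \sin(2u)/(2u)$ satisfying $|\sigma(u)| \leq 1$. A Liouville substitution $w := (\sin^{m_0} t \cos^{m_1} t)^{1/2}\, u$, which preserves zeros on $(0, \pi/2)$, eliminates the first-derivative term and produces $\ddot w + \tilde Q(t, u)\, w = 0$ with $\tilde Q = \sigma(u) q - \tfrac12 \dot p - \tfrac14 p^2$. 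A direct computation gives
\[
\tilde Q(t, u) = \sigma(u)\left(\frac{m_0}{\sin^2 t} - \frac{m_1}{\cos^2 t}\right) + \frac{m_0(2-m_0)}{4\sin^2 t} + \frac{m_1(2-m_1)}{4\cos^2 t} + \frac{(m_0+m_1)^2}{4}.
\]

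Applying $|\sigma(u)| \leq 1$ yields the pointwise upper bound
\[
\tilde Q(t, u) \leq \frac{m_0(6-m_0)}{4\sin^2 t} + \frac{m_1(6-m_1)}{4\cos^2 t} + \frac{(m_0+m_1)^2}{4}.
\]
The threshold $m_0 = 6$ of the theorem emerges precisely here: the coefficient $m_0(6-m_0)$ of the $1/\sin^2 t$ singularity is non-positive exactly when $m_0 \geq 6$. Equivalently, the indicial equation $\alpha^2 + (m_0-1)\alpha + m_0 = 0$ of the linearization at $t = 0$ has real roots iff its discriminant $m_0^2 - 6m_0 + 1$ is non-negative, i.e.\ iff $m_0 \geq 6$, marking the transition from oscillatory to non-oscillatory behavior at the left endpoint. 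Assuming $m_0 \geq 6$, the above bound reads $\tilde Q \leq M(m_0, m_1, \varepsilon)$ on every interval $[0, \pi/2 - \varepsilon]$; Sturm's comparison theorem then implies that consecutive zeros of $w$ in that interval are separated by at least $\pi/\sqrt{\max(M, 0)}$, yielding a bound on the number of zeros of $w$, hence of $u$, on $[0, \pi/2 - \varepsilon]$ that is independent of $v$.

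It remains to bound the zeros of $u$ in the left neighborhood $(\pi/2 - \varepsilon, \pi/2)$ of the right endpoint uniformly in $v$. The indicial equation at $t = \pi/2$ is $\alpha^2 + (m_1-1)\alpha - m_1 = 0$, whose roots $\alpha = 1$ and $\alpha = -m_1$ are always real, so the linearized equation is disconjugate near $\pi/2$ and zeros of the linearization cannot accumulate at $\pi/2$. The main obstacle is to promote this linear statement to a uniform-in-$v$ bound for the nonlinear equation, because when $m_1 \leq 5$ the pointwise upper bound on $\tilde Q$ derived above blows up at $t = \pi/2$. I would handle this by a phase-plane analysis of the leading Fuchsian part in the variable $s := \pi/2 - t$: choosing $\varepsilon > 0$ depending only on $(m_0, m_1)$ so that at any zero of $u$ the smallness of $u$ forces $\sigma(u)$ close to $1$, and then exhibiting a refined Sturm-type comparison that exploits the sign structure of the leading $1/s^2$ coefficient to force each $u_v$ to have at most a bounded number of zeros in $(\pi/2 - \varepsilon, \pi/2)$. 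Combined with the previous step, this bounds the nodal number by a constant $N(m_0, m_1)$.
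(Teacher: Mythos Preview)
Your approach on the bulk interval $(0,\tfrac{\pi}{2}-\varepsilon]$ is correct and genuinely different from the paper's. The paper works in the variable $x=\log\tan t$, reflects to $\varphi_v(x)=r_v(-x)-\tfrac{\pi}{2}$, and compares the Pr\"ufer angle $\theta_v=\arctan(\varphi_v'/\varphi_v)$ with the angle $\theta_L$ of the \emph{linearized} equation $\varphi_L''-\alpha_{m_1,m_0}\varphi_L'+m_0\varphi_L=0$; the hypothesis $m_0\geq 6$ enters through the discriminant of that linear equation when showing $\theta_L$ has a finite limit. Your route via the Liouville normal form and a Sturm upper bound is more elementary and makes the threshold $m_0=6$ appear transparently as the sign of the $1/\sin^2 t$ coefficient. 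Both arguments ultimately exploit the same indicial dichotomy, but yours avoids the winding-number bookkeeping, while the paper's Pr\"ufer comparison is slightly more robust in that it directly controls the phase rather than just the zero count.

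The genuine gap is your handling of $(\tfrac{\pi}{2}-\varepsilon,\tfrac{\pi}{2})$. The sketch you give is not a proof: saying that ``at any zero of $u$ the smallness of $u$ forces $\sigma(u)$ close to $1$'' is vacuous (at a zero $\sigma(u)=1$ exactly; the issue is what happens \emph{between} zeros), and ``a refined Sturm-type comparison'' is not specified. The difficulty is real, because your upper bound on $\tilde Q$ blows up at $t=\tfrac{\pi}{2}$ when $m_1\leq 5$, so no Sturm comparison with a constant majorant can work there. The paper closes this gap not by any local analysis at $t=\tfrac{\pi}{2}$ but by invoking its earlier ``stripe'' Lemma (Lemma~\ref{streifen}), valid for all $m_1\geq 2$ and independent of the hypothesis $m_0\geq 6$: once $x\geq d_{m_0,m_1}^{+}$ the solution is trapped in a strip $[(2\ell_0+1)\tfrac{\pi}{2},(2\ell_0+3)\tfrac{\pi}{2}]$, so $r_v$ can meet $\tfrac{\pi}{2}$ at most once beyond that point. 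If you fix $\varepsilon$ so that $t=\tfrac{\pi}{2}-\varepsilon$ corresponds to $x=d_{m_0,m_1}^{+}$, that lemma immediately supplies the missing uniform bound and your argument is complete.
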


These results can be found in Section\,\ref{sec6}.

\paragraph{Limiting configuration}
We prove that the solutions of the $(m_0,m_1)$-BVP converge
against a limiting configuration when the initial velocity goes to infinity:
we show that for large initial velocities $r_v$ becomes arbitrarily close to $\tfrac{\pi}{2}$ on the interval $(0,\tfrac{\pi}{2})$.  

\begin{thd*}
For $t_0,t_1\in(0,\tfrac{\pi}{2})$ and each $\epsilon>0$ there exists a initial velocity $v_0$ such that $\lvert r_v(t)-\tfrac{\pi}{2}\rvert<\epsilon$ for all $t\in(t_0,t_1)$ and $v\geq v_0$.
\end{thd*}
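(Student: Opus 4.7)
Since $r\equiv\pi/2$ is itself a constant solution of the $(m_0,m_1)$-ODE (both sides vanish because $\sin\pi=0$), the claim is that $r_v$ converges to this trivial solution, uniformly on every compact subset of $(0,\pi/2)$, as $v\to\infty$. The intuition is that for large $v$ the solution rises almost vertically from $0$ to $\pi/2$ in a tiny window near $t=0$ and then remains trapped close to $\pi/2$. I would make this precise by introducing the rescaling $R_v(s):=r_v(s/v)$, which satisfies $R_v(0)=0$ and $R_v'(0)=1$. Substituting $t=s/v$ into the ODE and using $\cot(s/v)=v/s+O(s/v)$, $\csc^2(s/v)=v^2/s^2+O(1)$, $\tan(s/v)=O(s/v)$, $\sec^2(s/v)=1+O((s/v)^2)$, the rescaled equation converges (coefficients uniformly on compact subsets of $(0,\infty)$) as $v\to\infty$ to the scaling-invariant limit
\[
R''(s)=-\frac{m_0}{s}R'(s)+\frac{m_0}{2s^2}\sin 2R(s),\qquad R(0)=0,\ R'(0)=1.
\]
Existence and uniqueness of $R$ follows from a Frobenius-type expansion at $s=0$ analogous to the analysis of Section~\ref{sec2}, and standard continuous dependence yields $R_v\to R$ uniformly on compact subsets of $[0,\infty)$.

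The next step is to show $R(s)\to\pi/2$ as $s\to\infty$. Under the substitution $u=\log s$, $Q(u):=R(e^u)$, the limit equation becomes the autonomous damped pendulum
\[
Q''(u)+(m_0-1)Q'(u)=\tfrac{m_0}{2}\sin 2Q(u),
\]
whose mechanical energy $\mathcal{E}(u):=\tfrac12 Q'(u)^2+\tfrac{m_0}{4}\cos 2Q(u)$ satisfies $\mathcal{E}'(u)=-(m_0-1)Q'(u)^2\le 0$, strictly when $Q'\ne 0$. The initial datum corresponds in $u$-coordinates to departure from the saddle $(0,0)$ along its unstable manifold as $u\to-\infty$, with limiting energy $m_0/4$. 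Since $\mathcal{E}$ is strictly decreasing along the trajectory, it can never attain $m_0/4$ again; in particular $Q$ cannot reach $\pi$ (where the potential equals $m_0/4$). The only available attractor is the asymptotically stable equilibrium $(\pi/2,0)$ of energy $-m_0/4$, and LaSalle's invariance principle forces $(Q,Q')\to(\pi/2,0)$ as $u\to\infty$. Therefore $R(s)\to\pi/2$ and $sR'(s)\to 0$ as $s\to\infty$.

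Now given $\epsilon>0$ and $[t_0,t_1]\subset(0,\pi/2)$, pick $s^*$ with $|R(s)-\pi/2|<\epsilon/3$ and $|sR'(s)|<\epsilon/3$ for $s\ge s^*$; by the previous steps, for $v$ large we obtain $|R_v(s^*)-\pi/2|<2\epsilon/3$, $s^*|R_v'(s^*)|<2\epsilon/3$, and $t_v^*:=s^*/v<t_0$. The remaining task is to propagate the smallness of $r_v-\pi/2$ from $t=t_v^*$ to all of $[t_0,t_1]$. This is the main obstacle: the unweighted velocity $\dot r_v(t_v^*)=vR_v'(s^*)$ is of order $v\to\infty$, so a naive Grönwall estimate would blow up. I would exploit the variational structure of the ODE via the integrating factor $\mu(t):=\sin^{m_0}t\cos^{m_1}t$, in whose terms the linearisation around $r=\pi/2$ takes the self-adjoint form $\frac{d}{dt}(\mu\dot\rho)=\mu\bigl(\tfrac{m_0}{\sin^2 t}-\tfrac{m_1}{\cos^2 t}\bigr)\rho$. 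The vanishing $\mu(t)\sim t^{m_0}$ at $t=0$ absorbs the large velocity:
\[
\mu(t_v^*)\,\dot r_v(t_v^*)^2=(s^*)^{m_0}v^{2-m_0}R_v'(s^*)^2\le(2\epsilon/3)^2(s^*)^{m_0-2}v^{2-m_0},
\]
which is small for $m_0\ge 2$ and $v$ large. A weighted energy of the form $\mathcal F(t):=\mu(t)\dot\rho(t)^2+q(t)\rho(t)^2$ with a suitably chosen positive weight $q$ then admits a Grönwall-type bound on $[t_v^*,t_1]$, yielding $|r_v(t)-\pi/2|<\epsilon$ throughout $[t_0,t_1]$. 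Balancing the large initial velocity against the strong damping $\sim m_0/t$ near $t=0$, via the correct weight $\mu$, is the delicate ingredient of the argument.
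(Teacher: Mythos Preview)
Your rescaling $R_v(s)=r_v(s/v)$ and the damped-pendulum limit in $u=\log s$ are correct and amount exactly to the result of Gastel \cite{ga} that the paper quotes in Lemma~\ref{bigv}: your $Q$ is the paper's $\psi$ shifted by $\pi/2$, and your energy/LaSalle argument for $Q\to\pi/2$ is the same mechanism. So steps one through three are on the same track as the paper.

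The propagation step is where your proposal is incomplete. First, a sign: the self-adjoint linearisation around $r=\pi/2$ reads
\[
\tfrac{d}{dt}(\mu\dot\rho)=\mu\Bigl(\tfrac{m_1}{\cos^2 t}-\tfrac{m_0}{\sin^2 t}\Bigr)\rho,
\]
the opposite of what you wrote; near $t=0$ the equilibrium $\pi/2$ is \emph{stable}, not unstable. More importantly, you never specify $q$, and the phrase ``Gr\"onwall-type bound on $[t_v^*,t_1]$'' hides the real issue: in the natural logarithmic scale this interval has length $\sim\log v\to\infty$, so an inequality $\mathcal F'\le C\mathcal F$ with a fixed $C$ is useless. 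Your approach \emph{can} be completed---with $q=\mu(m_0\csc^2 t-m_1\sec^2 t)$ one finds that $\mathcal F/q$ is essentially monotone near $t=0$, so that the Gr\"onwall growth factor exactly cancels the decay of $\mathcal F(t_v^*)$---but this crucial cancellation is neither stated nor checked.

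The paper avoids this delicacy by working in $x=\log\tan t$. There the derivative $r'(x)=\dot r(t)\sin t\cos t$ is already the correctly weighted velocity, and $V^{r_v}(x)=\tfrac12 r'(x)^2-\beta_{m_0,m_1}(x)\cos^2 r(x)$ is a genuine Lyapunov function, strictly decreasing on $(-\infty,Z^\alpha_{m_0,m_1}]$ (Lemma~\ref{bound2}). Smallness of $(\varphi_v,\varphi_v')$ at the moving point $x_2-\log v$ (obtained from the damped-pendulum limit exactly as you describe) therefore propagates \emph{losslessly} to any fixed $T_-$, after which an ordinary Gr\"onwall on the bounded interval $[T_-,T_+]$ (Lemma~\ref{stayclose}) finishes the argument. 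In effect the paper's $V^{r_v}$ is the ``suitably chosen weight'' you were reaching for, but in the $x$-variable its monotonicity is immediate rather than something to be engineered.
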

This result can be found in Section\,\ref{sec4}.

\paragraph{Brouwer degree}
Let $r$ be a solution of the $(m_0,m_1)$-BVP.
From Theorem\,3.4 in \cite{puttmann} we deduce that the Brouwer degree of $\psi_r$ is given by
\label{brouwer}
$$\mbox{deg}(\psi_{r})=\left\{\begin{array}{lll} 2\ell+1& \mbox{if $m_0$ and $m_1$ are even;} \\
         -1&\mbox{if $\ell,m_0$ odd and $m_1$ even;}\\
         +1&\mbox{otherwise,}
         \end{array}\right.$$
where $\ell$ is the integer determined by $r(\tfrac{\pi}{2})=(2\ell+1)\tfrac{\pi}{2}$.
By a careful examination of the $(m_0,m_1)$-ODE we determine the possible $\ell\in\Z$ and thus obtain restrictions for the Brouwer degree.


\begin{the*}
\label{tea}
For each solution $r$ of the $(m_0,m_1)$-BVP, the Brouwer degree of $\psi_r$ is $\pm 1$ or $\pm 3$. 
\end{the*}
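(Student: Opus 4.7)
My plan is to analyze the formula for $\deg(\psi_r)$ case by case based on the parities of $m_0$ and $m_1$. When at least one of $m_0, m_1$ is odd, the formula recalled just above the theorem immediately gives $\deg(\psi_r) \in \{-1, +1\}$, so the conclusion is automatic. The interesting case is when both $m_0$ and $m_1$ are even, where $\deg(\psi_r) = 2\ell + 1$; here I must show that the integer $\ell$ defined by $r(\pi/2) = (2\ell+1)\tfrac{\pi}{2}$ lies in $\{-2, -1, 0, 1\}$, which produces exactly the four admissible degrees $\pm 1, \pm 3$.

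The first simplification is to exploit the symmetry $r \mapsto -r$ of the $(m_0, m_1)$-ODE: this involution preserves the ODE (since $\dot r$ and $\sin 2r$ both change sign) and sends a solution with endpoint value $(2\ell+1)\tfrac{\pi}{2}$ to one with endpoint value $(2(-\ell-1)+1)\tfrac{\pi}{2}$. It therefore identifies $\{0,1\}$ with $\{-1,-2\}$, and it suffices to rule out $r(\pi/2) \geq \tfrac{5\pi}{2}$, i.e., $\ell \geq 2$.

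Suppose, for contradiction, that $r$ is a solution of the $(m_0,m_1)$-BVP with $r(\pi/2) \geq \tfrac{5\pi}{2}$. Since $r(0) = 0$, the intermediate value theorem produces times $0 < t_1 < t_2 < \tfrac{\pi}{2}$ at which $r(t_1) = \pi$ and $r(t_2) = 2\pi$; these are equilibria of the ODE at which $\sin 2r = 0$. I would then pass to the divergence form of the $(m_0, m_1)$-ODE,
\[
\bigl(\sin^{m_0}(t)\cos^{m_1}(t)\,\dot r\bigr)' \;=\; -\sin^{m_0}(t)\cos^{m_1}(t)\,\tfrac{\sin 2r}{2}\Bigl(\tfrac{m_1}{\cos^2 t} - \tfrac{m_0}{\sin^2 t}\Bigr),
\]
and analyse $r$ near each of these equilibria by Sturm-type comparison with the linearisation. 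The goal is to show that the oscillation behaviour required to pass through both $r = \pi$ and $r = 2\pi$ while still reaching $\tfrac{5\pi}{2}$ is incompatible with the regularity of $r$ forced at the singular endpoints $t = 0$ and $t = \pi/2$.

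The hardest step is this oscillation/comparison analysis. The bracketed coefficient $m_1/\cos^2 t - m_0/\sin^2 t$ changes sign at the critical time $t^* := \arctan\sqrt{m_0/m_1}$, and the damping coefficient $m_0\cot t - m_1\tan t$ is unbounded with opposite signs at the two endpoints, so no single global comparison works on $(0, \pi/2)$. One is forced to split the interval at $t^*$, apply the comparison separately on $(0, t^*)$ and $(t^*, \pi/2)$, and then match the oscillation information at $t = t^*$. Carrying out this patching cleanly so as to force $r(\pi/2) < \tfrac{5\pi}{2}$ is the principal obstacle I anticipate.
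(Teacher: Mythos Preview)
Your parity reduction is a clean observation and it is correct: when at least one of $m_0,m_1$ is odd the degree formula already gives $\pm 1$, so only the case ``both even'' needs the bound $\ell\in\{-2,-1,0,1\}$, and the symmetry $r\mapsto -r$ halves that to ruling out $\ell\geq 2$. The paper does not make this reduction; it proves the stronger fact $\ell_0\in\{-1,0,1\}$ for every $m_0\geq 2$ regardless of parity.

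The gap is in the remaining (and essential) part of your argument. Sturm--type comparison with the linearisation at $r=\pi$ or $r=2\pi$ controls the \emph{number of crossings} of those levels, not the \emph{amplitude} of $r$. Nothing in ``regularity at the singular endpoints'' forbids large $\ell$: the smooth solution $r(t)=t$ has $\ell=0$, and there is no regularity obstruction that would single out $\ell\leq 1$ over $\ell\geq 2$. Your final paragraph correctly identifies that the coefficient changes sign at $t^*$ and that a patching is needed, but you never say what contradiction the patching is supposed to produce. As written, the plan does not contain a mechanism that converts oscillation information into an amplitude bound on $r(\tfrac{\pi}{2})$.

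The paper takes a completely different route, which is a quantitative energy argument rather than a comparison one. Working in the variable $x=\log\tan t$, it introduces two Lyapunov functions
\[
V^r_{m_0,m_1}(x)=\tfrac12 r'(x)^2-\beta_{m_0,m_1}(x)\cos^2 r(x),\qquad
W^r_{m_0,m_1}(x)=\tfrac12 r'(x)^2+\beta_{m_0,m_1}(x)\sin^2 r(x),
\]
and shows (Lemmas~\ref{bound} and~\ref{bound2}) that for a BVP solution one has $|r'(x)|\leq\sqrt{m_0}$ for $x\leq Z^{\alpha}_{m_0,m_1}$ and $|r'(x)|\leq\sqrt{m_1+1}$ (resp.\ $\sqrt{m_1}$) for $x\geq Z^{\alpha}_{m_0,m_1}$ (resp.\ $x\geq Z^{\beta}_{m_0,m_1}$). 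Theorems~\ref{streifenneg} and~\ref{streifen} provide explicit constants $d^{-}_{m_0,m_1}<d^{+}_{m_0,m_1}$ outside of which $r$ is already trapped in a strip of height~$\pi$. The proof of Theorem~\ref{brodeg} then integrates the derivative bounds across the finite window $[d^{-}_{m_0,m_1},d^{+}_{m_0,m_1}]$, whose length is controlled by the elementary estimates of Lemma~\ref{absch1}, to obtain $|r(d^{+}_{m_0,m_1})|<\tfrac{3\pi}{2}$, and hence $\ell_0\in\{-1,0,1\}$. The argument is unglamorous but effective: it is a direct amplitude estimate, and that is exactly what your Sturm scheme does not supply.
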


Afterwards we prove that for large initial velocities the Brouwer degree of each solution of the $(m_0,m_1)$-BVP is $\pm 1$.

\begin{thf*}
\label{limbro}
There exists a $v_0\in\R$ such that each solution of the $(m_0,m_1)$-BVP with initial velocity $v\geq v_0$ has Brouwer degree $\pm 1$.
\end{thf*}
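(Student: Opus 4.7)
The plan is to argue by contradiction, combining Theorem D with an inner rescaling near $t=\pi/2$ and a phase-plane analysis of the resulting limit equation. From the degree formula stated immediately before Theorem E, the degree is automatically $\pm 1$ unless both $m_0$ and $m_1$ are even; in that case Theorem E restricts the integer $\ell$ in $r(\pi/2)=(2\ell+1)\pi/2$ to $\{-2,-1,0,1\}$, and the degree equals $\pm 3$ precisely for $\ell\in\{1,-2\}$. It therefore suffices to rule out these two values for large $v$, and I focus on $\ell=1$, i.e.\ $r_v(\pi/2)=3\pi/2$; the case $\ell=-2$ is completely analogous.

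Suppose for contradiction that $v_n\to\infty$ and $r_{v_n}(\pi/2)=3\pi/2$ for every $n$. Since $r_{v_n}(0)=0$, the intermediate value theorem produces $t_n\in(0,\pi/2)$ with $r_{v_n}(t_n)=\pi$; Theorem D applied with $\epsilon<\pi/2$ forces $r_{v_n}(t)<\pi$ on every compact subinterval of $(0,\pi/2)$ for $n$ large, so $t_n\to\pi/2$. I next rescale by $\hat r_n(\tilde s):=r_{v_n}(\pi/2-\tilde s/v_n)$. A direct substitution shows that on compact subsets of $(0,\infty)$, $\hat r_n$ satisfies an $O(v_n^{-2})$-perturbation of the autonomous limit equation
\begin{equation*}
\hat r''(\tilde s)+\frac{m_1}{\tilde s}\,\hat r'(\tilde s)+\frac{m_1}{2\tilde s^2}\sin 2\hat r(\tilde s)=0,\qquad \hat r(0)=3\pi/2.
\end{equation*}
In the logarithmic variable $u=\log\tilde s$ this becomes the damped oscillator $\hat r_{uu}+(m_1-1)\hat r_u+(m_1/2)\sin 2\hat r=0$, whose potential $V(\hat r)=-(m_1/4)\cos 2\hat r$ has saddles at every odd multiple of $\pi/2$ and stable equilibria at every integer multiple of $\pi$.

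Regularity at the singular point $\tilde s=0$ (the indicial roots at $3\pi/2$ being $1$ and $-m_1$) forces $\hat r(\tilde s)=3\pi/2+A\tilde s+O(\tilde s^2)$, so in the $u$-variable the orbit leaves the saddle $3\pi/2$ along its $1$-dimensional unstable manifold with $\hat r_u\to 0$ as $u\to-\infty$; in particular $\hat r_n'(0)$ is uniformly bounded in $n$, and Arzel\`a--Ascoli yields a locally uniform subsequential limit $\hat r_\infty$ solving the limit equation. The total mechanical energy $E=\tfrac12\hat r_u^2+V(\hat r)$ of $\hat r_\infty$ starts at $V(3\pi/2)=m_1/4$ and is strictly decreasing thereafter, so it can never regain the barrier height $V(\pi/2)=V(5\pi/2)=m_1/4$. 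Hence $\hat r_\infty$ is trapped in $(\pi/2,5\pi/2)$ and converges as $\tilde s\to\infty$ to one of the stable equilibria $L\in\{\pi,2\pi\}$.

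To close the contradiction, fix $\epsilon'\in(0,\pi/2)$. Theorem D gives $\hat r_n(v_n\epsilon')=r_{v_n}(\pi/2-\epsilon')\to\pi/2$. On the other hand, pick $R$ so large that $|\hat r_\infty(R)-L|$ is much smaller than $|L-\pi/2|$; local uniform convergence gives the same bound for $\hat r_n(R)$ once $n$ is large, and a Lyapunov argument at the hyperbolic equilibrium $L$, propagated along the window $[R,v_n\epsilon']$ using the smallness of the perturbation, forces $\hat r_n(v_n\epsilon')$ to cluster near $L\neq\pi/2$, contradicting the preceding line. The delicate point, which I expect to be the main obstacle, is precisely this transport step: the limit equation only approximates the rescaled ODE on bounded $\tilde s$-intervals, so one needs either a carefully chosen intermediate matching region or a quantitative stability estimate for the rescaled orbits that remains valid over the growing window $[R,v_n\epsilon']$ in order to bridge the inner phase-plane analysis with the outer estimate supplied by Theorem D.
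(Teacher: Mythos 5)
Your reduction via the degree formula is fine, and your phase--plane analysis of the autonomous equation $\hat r_{uu}+(m_1-1)\hat r_u+\tfrac{m_1}{2}\sin 2\hat r=0$ is correct as far as it goes, but the proof has a genuine gap --- in fact two, one of which you name yourself. First, the transport step over the window $[R,v_n\epsilon']$ is not a deferrable technicality: the perturbation of the limit equation has relative size $O((\tilde s/v_n)^2)$, which at the outer end $\tilde s\sim v_n\epsilon'$ is of size $O(\epsilon'^2)$, not small in $n$, so no stability estimate for the autonomous flow can be propagated across the whole window; without this step there is no contradiction, and this step is essentially the entire content of the theorem. Second, and prior to this, the rescaling scale $1/v_n$ at $t=\pi/2$ is unjustified. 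The only blow-up asymptotics available (the limit (\ref{limit}), quoted from Gastel in the proof of Lemma \ref{bigv}) concerns the endpoint $t=0$, where the initial velocity $v$ genuinely sets the layer width; at $t=\pi/2$ the relevant datum is $\dot r_{v_n}(\pi/2)$, which is not known to be comparable to $v_n$. Since your limit equation is scale invariant, choosing the wrong scale typically produces a constant limit: Arzel\`a--Ascoli on compact subsets of $(0,\infty)$ (justifiable via Lemma \ref{bound}, which gives $\lvert\hat r_n'(\tilde s)\rvert\leq C/\tilde s$) yields subsequential limits, but nothing forces $\hat r_\infty$ to be the nonconstant orbit leaving the saddle $3\pi/2$; it could be $\equiv 3\pi/2$ or $\equiv\pi/2$, in which case your energy argument says nothing. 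Your assertions that $\hat r(\tilde s)=3\pi/2+A\tilde s+O(\tilde s^2)$ with $\hat r_n'(0)$ uniformly bounded (and, implicitly, bounded away from $0$) are exactly the missing scale information and are not established. A smaller point: the case $\ell=-2$ is not ``completely analogous'' but easier --- for $v\geq v_0$, Theorem D already places $r_v$ within distance $\pi$ of $\pi/2$ at a fixed point, so $-3\pi/2$ is excluded outright by Lemma \ref{streifen}.

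For comparison, the paper's proof of Theorem \ref{bigvel} avoids matched asymptotics altogether and supplies, in effect, exactly the uniform estimate your transport step lacks. It works in the outer variable $x=\log\tan t$ with the Lyapunov function $W^{r_v}_{m_0,m_1}=\tfrac12 (r_v')^2+\beta_{m_0,m_1}\sin^2 r_v$. The proof of Theorem D already gives, for a fixed $T$ and all large $v$, both $W^{r_v}_{m_0,m_1}(T)\geq\mu\tfrac{m_1}{2}$ and the global budget $W^{r_v}_{m_0,m_1}(x)-W^{r_v}_{m_0,m_1}(T)\leq(1-\mu)\tfrac{m_1}{2}$ for all $x\geq T$, with $\mu\in[\tfrac12,1)$ at our disposal. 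If $\lim_{x\rightarrow\infty}r_v(x)$ were $3\pi/2$ or $-\pi/2$, the solution would have to cross some $k_0\pi$ at a point $x_2\geq T$; there $\sin^2 r_v(x_2)=0$ forces $\tfrac12 r_v'(x_2)^2\geq\mu\tfrac{m_1}{2}$, hence $\tfrac{d}{dx}W^{r_v}_{m_0,m_1}(x_2)\geq\mu\tfrac{m_1(m_1-1)}{2}$, and since the second derivative of $W^{r_v}_{m_0,m_1}$ is uniformly bounded, the energy gains a fixed amount over an interval of uniform length --- contradicting the budget once $\mu$ is close to $1$. If you want to salvage your outline, this energy-budget argument is precisely the ``quantitative stability estimate valid over the growing window'' you were looking for, formulated in the variable where the monotonicity of $W^{r_v}_{m_0,m_1}$ is global rather than only approximate on bounded rescaled intervals.
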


Numerical experiments indicate that there does not exist a solution $r$ of the $(m_0,m_1)$-BVP such that the Brouwer degree of $\psi_r$ is $\pm 3$.

\smallskip

Theorems\,E and F are proved in Section\,\ref{sec5}.

\paragraph{Application}
By combining a result of \cite{ps} with Theorems\,A and B we obtain the following theorem.
\begin{thg*}
For each of the special orthogonal groups $\SO(4),\SO(5),\SO(6)$ and $\SO(7)$ there exists two infinite families of harmonic self-maps.
\end{thg*}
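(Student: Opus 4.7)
The statement follows by combining Theorems~A and B with the transport mechanism of \cite{ps}, which turns a solution of an appropriate $(m_0,m_1)$-BVP into a harmonic self-map of a prescribed special orthogonal group. My plan is first to identify, for each $n\in\{4,5,6,7\}$, a pair $(m_0,m_1)$ with $2\leq m_0\leq m_1$ and $m_0\leq 5$ for which the correspondence of \cite{ps} produces a harmonic self-map of $\SO(n)$ from every solution of the $(m_0,m_1)$-BVP. The range $2\leq m_0\leq 5$ is exactly where Theorems~A and B apply, and the natural symmetric choice $m_0=m_1=:m$ with $m\in\{2,3,4,5\}$ should pair off bijectively with the four groups $\SO(4),\SO(5),\SO(6),\SO(7)$; a non-symmetric pair would also be acceptable wherever \cite{ps} supplies one.

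With the matching in hand, the first infinite family is obtained from Theorem~A: for each $k\in\N$ it supplies a solution $r_k$ of the $(m,m)$-BVP with nodal number $k$, and the construction of \cite{ps} converts the sequence $\{r_k\}_{k\in\N}$ into a sequence $\{\psi_{r_k}\}$ of harmonic self-maps of $\SO(n)$; since the nodal number is a topological invariant of the equivariant data, these maps are pairwise distinct. The second family comes from Theorem~B: in the symmetric regime one obtains a countably infinite collection $\{r'_j\}_{j\in\N}$ of BVP solutions, all of nodal number zero, which \cite{ps} promotes to a countable family $\{\psi_{r'_j}\}$ of harmonic self-maps of $\SO(n)$.

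Finally, I would argue that the two families are genuinely distinct: the first contains at most one map of nodal number zero (namely $\psi_{r_0}$), whereas the second is an entire infinite family of nodal-number-zero maps, so after discarding that one overlap element their union contains two disjoint infinite sequences of harmonic self-maps of $\SO(n)$. The main obstacle is the bookkeeping step: verifying that the pair $(m_0,m_1)$ dictated by \cite{ps} for each target group $\SO(n)$ indeed lies in the admissible range of Theorems~A and B, and checking that the assignment $r\mapsto\psi_r$ does not collapse the countably many distinct BVP solutions into finitely many maps of $\SO(n)$. Both points are purely formal once the construction of \cite{ps} is spelled out, but they are the only non-automatic ingredients and single out precisely the four groups $\SO(4),\SO(5),\SO(6),\SO(7)$ appearing in the statement.
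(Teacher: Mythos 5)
Your proposal is correct and follows essentially the same route as the paper: the correspondence is exactly Theorem~6.2 of \cite{ps}, sending solutions of the $(m,m)$-BVP to harmonic self-maps of $\SO(m+2)$ for $m\in\{2,3,4,5\}$, and the paper likewise distinguishes the two families by noting that the Theorem~A family contains at most one member of nodal number zero while the Theorem~B family consists entirely of such. The bookkeeping concerns you flag (admissibility of the symmetric pairs and injectivity of $r\mapsto\psi_r$) are resolved exactly as you anticipate.
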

This theorem can be found in Section\,\ref{sec4}.

\bigskip

The paper is organized as follows: after giving some background information in Section\,\ref{sec1},
 we provide the preliminaries in Section\,\ref{sec2}.
 In Section\,\ref{sec3} we carry out the construction of infinitely many solutions of the $(m_0,m_1)$-BVP where $2\leq m_0\leq 5$ and thereby prove Theorem\,A.
 Afterwards, in Section\,\ref{sec6}, we deal with the cases $m_0\geq 6$ and explain why an analogous construction to that of the cases $2\leq m_0\leq 5$ is not possible; we in particular prove Theorem\,C. In Section\,\ref{sec4} we investigate the behavior of these solutions of the initial value problem with large initial velocities
 and prove Theorem\,D. As a byproduct we prove Theorem\,B. Form this theorem and Theorem\,A we deduce Theorem\,G. 
 Finally, in Section\,\ref{sec5} we give restrictions for the possible the Brouwer degrees of the solutions of the $(m_0,m_1)$-BVP; we in particular prove Theorems\,E and F.\\
Note that while the results of Section\,\ref{sec2} are needed throughout the paper, Sections\,3, 4, 5 and 6 
can be read independently from each other.

\section{Previous results}
\label{sec1}
\subsection{Harmonic maps between spheres}
\label{link}
In this subsection we give a short and therefore incomplete survey on harmonic maps. The emphasize lies on harmonic maps between spheres.

\smallskip

The study of harmonic maps is an old problem which occupied generations of mathematicians.
It received a significant boost in the last century by the paper of Eells and Sampson \cite{eells3}.
The basic question these authors examine is: does every homotopy class of maps between Riemannian manifolds admit a harmonic representative?
For the special case that the target manifold is compact and all its sectional curvatures are nonnegative they gave a positive answer to this question.
In contrast to this, for the case that the target manifold also admits positiv sectional curvatures the answer to this question is only known in special cases.
Even for maps between spheres this question is still open.

\smallskip

The paper of Eells and Sampson \cite{eells3} was the starting point for a wealth of papers in which the classification and construction of harmonic maps between Riemannian manifolds has been pursued, see e.g. \cite{BC,eellsl,eells2,ga,ga2,smith} and the references therein. 
Due to the amount of existing results in the literature we will only mention those which have a direct relevance for this paper, we will in particular restrict ourselves to harmonic self-maps of spheres. For an introduction to harmonic maps we refer the reader to the book of Eells and Ratto \cite{er}.

\smallskip

As already mentioned in the introduction, additional symmetry assumptions can sometimes reduce the problem of constructing harmonic maps
to finding solutions of an ordinary differential equation.
For the general reduction theory we refer the reader to \cite{er}. For the special case of harmonic maps between spheres there exists 
two basic reduction methods, the so-called harmonic Hopf and join constructions. Both of them we introduced by Smith \cite{smith}.
While the Hopf construction is used for constructing homotopically nontrivial maps between spheres of large dimensions,
the Join construction aims to the construction of homotopically nontrivial maps between spheres of small dimensions.
Smith modified the Hopf construction and the Join construction such that it give a harmonic representative in the homotopy class of the Hopf map and join, respectively.
Below we give a short survey of both reduction methods.

\smallskip

Recall that a map $f:\Sph^{p-1}\rightarrow\Sph^{q-1}$ with $p,q\geq2$ is called an \textit{eigenmap with eigenvalue $\lambda$} if $\lvert df\rvert^2\equiv\lambda$.
It is well-known that $f$ is a harmonic eigenmap if and only if the components of $f$ are harmonic polynomials of common degree $d$, which in particular implies $\lambda=d(p+d-2)$.
Furthermore, for non negative integers $p_1,p_2,q\geq 2$ a harmonic map $f:\Sph^{p_1-1}\times\Sph^{p_2-1}\rightarrow\Sph^{q-1}$ is called a \textit{bi-eigenmap with eigenvalues $\lambda_1, \lambda_2$} if for all $x_1\in\Sph^{p_1-1}$ and all $x_2\in\Sph^{p_2-1}$ the restrictions $f(\,\cdot\,,x_2)$ and $f(x_1,\,\cdot\,)$ are harmonic eigenmaps with eigenvalues $\lambda_1$ and $\lambda_2$, respectively.

\smallskip

\textbf{Hopf Construction.} In algebraic topology the Hopf construction of a map $f:\Sph^{p_1}\times\Sph^{p_2}\rightarrow\Sph^{q-1}$ is
given by $H_f:\Sph^{p_1+p_2+1}\rightarrow\Sph^{q}, (x_1\sin t,x_2\cos t)\mapsto (f(x_1,x_2)\sin2t,\cos2t)$,
where $x\in\Sph^{p_1+p_2+1}$ is written uniquely (with exemption of a set of measure zero) as $x=(x_1\sin t,x_2\cos t)$ for $x_1\in\Sph^{p_1}$, $x_2\in\Sph^{p_2}$
and $t\in[0,\tfrac{\pi}{2}]$. Smith \cite{smith} proved that $$H(x_1\sin t,x_2\cos t)=(f(x_1,x_2)\sin u(t),\cos u(t)),$$
for some function $u:[0,\tfrac{\pi}{2}]\rightarrow[0,\pi]$ yields a harmonic map homotopic to $H_f$ if
$f$ is a harmonic bi-eigenmap with eigenvalues $\lambda_1, \lambda_2\in\N$
and $u$ satisfies
\begin{align*}
\ddot u(t)+(p_1\cot t-p_2\tan t)\dot u(t)-\tfrac{1}{2}\left(\tfrac{\lambda_1}{\sin^2t}+\tfrac{\lambda_2}{\cos^2t}\right)\sin2u(t)=0,
\end{align*}
with $u(0)=0$ and $u(\frac{\pi}{2})=\pi$. All constructions of harmonic maps based on this method crucially used that $\lambda_2$ is a positive integer. If we allow $\lambda_2$ to be negative, then for the special case $p_1=\lambda_1=m_0$, $p_2=m_1$, $\lambda_2=-m_1$ and $u=r$ the preceding ordinary differential equation coincides with the $(m_0,m_1)$-ODE. The boundary condition at $\tfrac{\pi}{2}$ is however not of the form of that of the $(m_0,m_1)$-BVP.

\smallskip

\textbf{Join Construction.} The join of two homogeneous polynomials $f_i:\Sph^{p_i}\rightarrow\Sph^{q_i}$, $i\in\lbrace 1,2\rbrace$, is given by $J_{f_{1},f_2}:\Sph^{p_1+p_2+1}\rightarrow\Sph^{q_1+q_2+1}, (x_1\sin t,x_2\cos t)\mapsto (f_1(x_1)\sin t,f_2(x_2)\cos t)$, where $x_1$ and $x_2$ are defined as above. Smith \cite{smith} proved that 
whenever $f_1, f_2$ are harmonic eigenmaps with eigenvalues $\lambda_1, \lambda_2$, then the ansatz
 $$J(x_1\sin t,x_2\cos t)=(f_1(x_1)\sin u(t),f_2(x_2)\cos u(t)),$$
for some function $u:[0,\tfrac{\pi}{2}]\rightarrow[0,\tfrac{\pi}{2}]$, yields a harmonic map homotopic to $J_{f_{1},f_2}$
if $u$ satisfies
\begin{align*}
\ddot u(t)+(p_1\cot t-p_2\tan t)\dot u(t)-\tfrac{1}{2}\left(\tfrac{\lambda_1}{\sin^2t}-\tfrac{\lambda_2}{\cos^2t}\right)\sin2u(t)=0,
\end{align*}
$u(0)=0$, $u(\tfrac{\pi}{2})=\tfrac{\pi}{2}$ and $0\leq u\leq\tfrac{\pi}{2}$.
 All constructions of harmonic maps based on this method crucially used that $u$ only attains values between $0$ and $\frac{\pi}{2}$. For the special case $p_1=\lambda_1=m_0$, $p_2=\lambda_2=m_1$ and $u=r$ the preceding ordinary differential equation coincides with the $(m_0,m_1)$-ODE. Most solutions we found numerically however do not satisfy $0\leq u\leq\tfrac{\pi}{2}$,
meaning that the constructions in the literature are not suited to our boundary value problem.

\subsection{Harmonic maps between cohomogeneity one manifolds}
In this subsection we explain in which context the $(m_0,m_1)$-BVP arises.

\smallskip

The equivariant homotopy classes of equivariant self-maps of compact cohomogeneity one manifolds whose orbit space is a closed interval form an infinite family. 
In \cite{ps} the problem of finding harmonic representatives of these homotopy classes was reduced to solving singular boundary value problems for nonlinear second order ordinary differential equations. 

\smallskip

Below we consider the special case of isometric cohomogeneity one actions $G\times \Sph^{n+1} \to \Sph^{n+1}$ where $G$ is a subgroup of $\SO(n+2)$.
The orbits of any such action yield an isoparametric foliation of the sphere. The data of such a foliation are the numbers $g$ of distinct principal curvatures of the isoparametric hypersurfaces and the multiplicities $m_0,\ldots,m_{g-1}$. If $g$ is odd, all multiplicities coincide $m := m_0=\ldots=m_{g-1}$.
If $g$ is even, we have $m_0=\ldots=m_{g-2}$ and $m_1=m_3=\ldots=m_{g-1}$.
M\"unzner \cite{m2} proved $g\in\lbrace 1,2,3,4,6\rbrace$. For $g=1$ and $g=2$ there are no restrictions for the multiplicities; for
$g=3$ all multiplicities coincide and are either given by $1,2,4$ or $8$ \cite{cartan}; for $g=4$ the possible multiplicities can be found in \cite{fkm};
for $g=6$ all multiplicities coincide and are given by $1$ or $2$ \cite{abresch}.

\smallskip

Let $H$ denote the principal isotropy group (along one normal geodesic) of the cohomogeneity one action $G\times \Sph^{n+1} \to \Sph^{n+1}$.
It was shown in \cite{ps} that the map
\begin{align*}
\psi_r:G/H\times ]0,\pi/g[\rightarrow G/H\times\R,\hspace{1cm} (gH,t)\rightarrow (gH,r(t)), 
\end{align*}
is harmonic if and only if $r$ solves the boundary value problem
\begin{align}
\label{iso}
\ddot r(t)=-\tfrac{1}{4\sin^{2} gt} \lbrace\bigl( &g(m_0+m_1)\sin 2gt + 2g(m_0-m_1)\sin gt \bigr)\dot r(t)\\
  -\notag &g(g -2)\sin 2(r-t) \bigl( m_0+m_1 + (m_0-m_1)\cos gt \bigr)\\
  -\notag&2g \sin(2(r-t)+gt) \bigl( (m_0+m_1)\cos gt +m_0-m_1\bigr)\rbrace,
\end{align}
with $\lim_{t\rightarrow 0} r(t)=0$ and $\lim_{t\rightarrow\pi/g} r(t)=(gk+1)\tfrac{\pi}{g}$ for a $k\in\Z$.

\smallskip

For $g=2$ this boundary value problem reduces to the $(m_0,m_1)$-BVP.

\subsection{What is known?}
\label{bvpbc}
In this subsection we explain which of the boundary values (\ref{iso}) are discussed in former papers.

\smallskip

The case $g=1$ was considered by Bizon and Chmaj: in \cite{BC} they studied the boundary value problem
\begin{align*}
& \ddot r(t)=\tfrac{1}{2}m\csc^2t\big(\sin2r(t)-\sin2t\cdot\dot r(t)\big),
& r(0)=0,\hspace{0.3cm} r(\pi)=\ell\pi,\,\ell\in\Z.
\end{align*}
This is the boundary value problem
associated to the cohomogeneity one actions whose orbits are homogeneous isoparametric hypersurfaces in spheres with one principal curvature of multiplicity $m$.
Since Bizon and Chmaj were seeking for point or reflection symmetric solutions, they could use a shooting method at the regular point $t=\tfrac{\pi}{2}$ to construct solutions with one of these additional symmetries. Thereby they proved that for each of the cases $2\leq m\leq 5$ there exists an
infinite family of harmonic self-maps of $\Sph^{2m+1}$. 

\smallskip

Although Theorem\,A has a certain similarity to the result of Bizon and Chmaj, the methods to prove it are different.
Clearly, the fact that in the case of the $(m_0,m_1)$-BVP we have to deal with two possibly distinct multiplicities makes the situation more complicated.
But even if the multiplicities coincide there a more complications: numerical experiments indicate that up to finitely many exceptions the solutions of the $(m_0,m_1)$-BVP
are neither point nor reflection symmetric.
This means we have to consider a shooting method at a singular point rather than at a regular one.

\medskip

Baird \cite{baird} derived the boundary value problem for $g=4$; see equation (5.3.25) in \cite{baird}. 
Since the methods used to construct some individual solutions of these equation differ from those used in this paper
we refer the reader to the book of Baird for more details.

\section{Preliminaries}
\label{sec2}
This section serves as preparation for the following sections.
In the first subsection we introduce another variable which we will use throughout this paper.
Afterwards, in the second subsection, we prove that for each $v\in\R$ there exists a unique solution $r_v$ of the $(m_0,m_1)$-ODE with $r_v(t)_{\lvert t=0}=0$.
Finally, in the third subsection we provide several restrictions for solutions $r$ of the $(m_0,m_1)$-ODE.

\subsection{The variable $x$}
\label{not}
Throughout this paper we will use not only the variable $t$ but also the variable $x=\log(\tan t)$.
In terms of the variable $x=\log(\tan t)$
the $(m_0,m_1)$-BVP is given by
\begin{multline*}
 r''(x)=\tfrac{1}{2}\left((m_0\!+\!m_1-2)\tanh x+m_1\!-\!m_0\right)r'(x)\\-\tfrac{1}{4}\big((m_0\!+\!m_1)\tanh x+m_1\!-\!m_0\big)\sin2r(x),
\end{multline*}
with $\lim_{x\rightarrow -\infty}r(x)=0$ and $\lim_{x\rightarrow \infty}r(x)=(2\ell+1)\tfrac{\pi}{2}$, $\ell\in\Z$. 
It is convenient to introduce the functions $\alpha_{m_0,m_1},\beta_{m_0,m_1}:\R\rightarrow\R$ by
\begin{align*}
\alpha_{m_0,m_1}:\,x\mapsto \tfrac{1}{2}\big((m_0+m_1-2)\tanh x+m_1-m_0\big)
\end{align*}
and $\beta_{m_0,m_1}=\tfrac{1}{2}\alpha_{m_0+1,m_1+1}$ such that the $(m_0,m_1)$-ODE is given by
\begin{align*}
r''(x)-\alpha_{m_0,m_1}(x)r'(x)+\beta_{m_0,m_1}(x)\sin2r(x)=0.
\end{align*}
We have $\alpha_{1,1}\equiv 0$.  If $m_1>1$ then $\alpha_{1,m_1}>0$ with $\lim_{x\rightarrow -\infty}\alpha_{1,m_1}(x)=0$.

\begin{notation}
For $m_0>1$, we denote by $Z_{m_0,m_1}^{\alpha}\in\R$ the unique zero of the function $\alpha_{m_0,m_1}$.
If $m_0=1$ and $m_1>1$, then we set $Z_{1,m_1}^{\alpha}=-\infty$.
Furthermore, we denote by $Z_{m_0,m_1}^{\beta}\in\R$ the unique zero of the function $\beta_{m_0,m_1}$.
\end{notation}

\subsection{Initial value problem}
In order to solve the initial value problem at $t=0$ we use a theorem of Malgrange in the version that can be found in \cite{haskins}. 

\smallskip

\noindent\textbf{Theorem of Malgrange (Theorem 4.7 in \cite{haskins}):}
\textit{Consider the singular initial value problem
\begin{align} 
\label{sing}
\dot y=\tfrac{1}{t}M_{-1}(y)+M(t,y),\hspace{1cm}y(0)=y_0,
\end{align}
where $y$ takes values in $\R^k$, $M_{-1}:\R^k\rightarrow\R^k$ is a smooth function of $y$ in a neighborhood of
$y_0$ and $M:\R\times\R^k\rightarrow\R^k$ is smooth in $t$, $y$ in a neighborhood of $(0,y_0)$. Assume that 
\begin{enumerate}
\renewcommand{\labelenumi}{(\roman{enumi})}
\item $M_{-1}(y_0) = 0$,
\item$h\mbox{Id}-d_{y_0}M_{-1}$ is invertible for all $h\in\N$, $h\geq1$.
\end{enumerate}
Then there exists a unique solution $y(t)$ of (\ref{sing}). Furthermore $y$ depends continuously on $y_0$ satisfying (i) and (ii).}

\smallskip

\begin{lemma}
\label{asy}
For each $v\in\R$ the initial value problem $r(t)_{\lvert t=0}=0, \dot r(0):=\tfrac{d}{dt}r(t)_{\lvert t=0}=v$
has a unique solution $r_v$.
The functions $r_v$ and $\tfrac{d}{dt}r_v$ depend continuously on $v$.
Furthermore, there exists no $t_0\in\R$ with $r_v(t_0)=\frac{\pi}{2}$ and $\dot r_v(t_0)=0$.
\end{lemma}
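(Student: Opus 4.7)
The plan is to establish the local initial value problem at the degenerate point $t=0$ via Malgrange's theorem applied to a suitable first-order reformulation, extend globally by standard ODE theory on the smooth interval $(0,\pi/2)$, and finally rule out the stationary configuration by uniqueness at a regular point.

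As the preparatory step I would introduce $y_1:=r/t$ and $y_2:=\dot r$, so that the initial data $r(0)=0$, $\dot r(0)=v$ correspond to $y_1(0)=y_2(0)=v$. A direct Taylor expansion at $t=0$ gives $(m_1-m_0)\csc 2t-(m_0+m_1)\cot 2t=-m_0/t+\tilde A(t)$ with $\tilde A$ smooth, and $m_0\sin(2r)/(2\sin^2 t)=m_0 y_1/t+S(t,y_1)$ with $S$ smooth (using that $\sin(2r)/(2r)$ and $\sin^2 t/t^2$ extend smoothly to the origin), while $m_1\sin(2r)/(2\cos^2 t)$ is already smooth at $t=0$. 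Substituting these expansions into the $(m_0,m_1)$-ODE puts the system in the Malgrange form
$$\dot y_1=\tfrac{1}{t}(y_2-y_1),\qquad \dot y_2=\tfrac{m_0}{t}(y_1-y_2)+M_2(t,y_1,y_2),$$
with $M_2$ smooth and $M_{-1}(y_1,y_2)=\bigl(y_2-y_1,\, m_0(y_1-y_2)\bigr)$.

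Both Malgrange hypotheses are then immediate: $M_{-1}(v,v)=0$, and the constant differential $d_{(v,v)}M_{-1}=\bmat -1 & \m 1\\ \m m_0 & -m_0\emat$ has characteristic polynomial $\lambda(\lambda+m_0+1)$, so its eigenvalues $0$ and $-(m_0+1)$ never coincide with a positive integer and $h\,\mathrm{Id}-d_{y_0}M_{-1}$ is invertible for every $h\in\N_{\geq 1}$. Malgrange's theorem then produces a unique local solution $y$ depending continuously on $(v,v)$, and $r_v(t):=ty_1(t)$ recovers the stated initial data together with the continuous dependence of $(r_v,\dot r_v)$ on $v$. Standard Picard--Lindel\"of theory extends $r_v$ across the regular interval $(0,\pi/2)$ with continuous dependence preserved.

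For the final claim, observe that $r\equiv\tfrac{\pi}{2}$ is a solution of the $(m_0,m_1)$-ODE, since $\sin 2(\pi/2)=0$ annihilates the nonlinearity. If $r_v(t_0)=\tfrac{\pi}{2}$ and $\dot r_v(t_0)=0$ for some $t_0\in(0,\pi/2)$, then uniqueness at the regular initial point $t_0$ forces $r_v\equiv\tfrac{\pi}{2}$ on a neighbourhood of $t_0$, and propagation along the connected interval contradicts $r_v(0)=0$. The main obstacle is really the choice of variables: the substitution $r=ty_1$ is dictated by the indicial exponents $1$ and $-m_0$ of the formal Frobenius analysis at $t=0$, and precisely this substitution places the Malgrange eigenvalues in the admissible set $\{0,-(m_0+1)\}$, so that the condition on $h\,\mathrm{Id}-d_{y_0}M_{-1}$ holds automatically for every multiplicity $m_0\geq 1$.
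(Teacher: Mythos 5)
Your proof is correct and follows essentially the same route as the paper: both reduce the degenerate initial value problem at $t=0$ to Malgrange's theorem via a first-order reformulation (the paper substitutes $s=t^2$ and works with the operator $\theta=s\,\tfrac{d}{ds}$, obtaining eigenvalues $\tfrac{1}{2}$ and $-\tfrac{m_0}{2}$, whereas you use the variables $(r/t,\dot r)$ in $t$ itself and obtain $0$ and $-(m_0+1)$; in either normalization no positive integer is an eigenvalue, so hypothesis (ii) holds). Your explicit Picard--Lindel\"of argument for the final assertion is a welcome addition, since the paper's own proof leaves that part implicit.
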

\begin{proof}
We introduce the variable $s=t^2$ and the operator $\theta=s\tfrac{d}{ds}$. Clearly, $\tfrac{d}{dt}=\tfrac{2}{\sqrt{s}}\theta$ and 
$\tfrac{d^2}{dt^2}=-\tfrac{2}{s}\theta+\tfrac{4}{s}\theta^2$. In terms of $s$ and $\theta$ the ODE is given by
\begin{align*}
\theta^2r=\tfrac{1}{2}\theta r&-\tfrac{\sqrt{s}}{2\sin(2\sqrt{s})}\left( (m_0+m_1)\cos(2\sqrt{s})+(m_0-m_1)\right)\theta r
\\&+\tfrac{s}{2^2}\csc^2(2\sqrt{s})(m_0-m_1+(m_0-m_1)\cos(2\sqrt{s}))\sin2r.
\end{align*}
Next we rewrite this ODE as a first order system
\begin{align*}
\theta (r)=\theta r,\hspace{1cm}\theta(\theta r)=\psi
\end{align*}
and we compute the partial derivatives of the right hand sides with respect to $r$ and $\theta r$ at $s=0$.
We thus obtain
\begin{align*}
\begin{pmatrix}
\tfrac{\partial}{\partial r}\theta r&\tfrac{\partial}{\partial \theta r}\theta r\\
\tfrac{\partial}{\partial r}\psi&\tfrac{\partial}{\partial \theta r}\psi
\end{pmatrix}_{\lvert s=0}=\begin{pmatrix}
0&1\\
\tfrac{1}{4}m_0& \tfrac{1}{2}(1-m_0) 
\end{pmatrix}.
\end{align*}
Since the eigenvalues of this matrix are given by $\tfrac{1}{2}$ and $-\tfrac{m_0}{2}$, the Theorem of Malgrange states 
 that a formal power series solution of this equation converges to a unique solution in a neighborhood of $s = 0$.
This solution depends continuously on $v$.
\end{proof}

Introduce the new variable $u=\tfrac{\pi}{2}-t$.
Similarly as in the above lemma one proves for each $v\in\R$ the initial value problem $r(u)_{\lvert u=0}=(2k+1)\tfrac{\pi}{2}, \tfrac{d}{du}r(u)_{\lvert u=0}=v$
has a unique solution.

\subsection{Restrictions for $r$}
\label{pi}
In this subsection we
prove that there exists a constant $d_{m_0,m_1}^{-}\in\R$ such that for each solution $r$ of the $(m_0,m_1)$-ODE with $\lim_{x\rightarrow -\infty}r(x)=0$ either $0\leq r(x)\leq\pi$ or $-\pi\leq r(x)\leq 0$, for all $x\leq d_{m_0,m_1}^{-}$. Furthermore, we show that 
if $r$ is a solution of the $(m_0,m_1)$-BVP then there exist $d_{m_0,m_1}^{+}\in\R$ and $\ell_0\in\Z$ such that $(2\ell_0+1)\tfrac{\pi}{2}\leq r(x)\leq (2\ell_0+3)\tfrac{\pi}{2}$ for all 
$x\geq d_{m_0,m_1}^{+}$. In the following picture one can find a sketch of one solution with $\ell_0=0$. 
Since we do not know anything about the behavior of the solutions in the interval $\lbrack d_{m_0,m_1}^{-},d_{m_0,m_1}^{+}\rbrack$ the line is dotted in this region.

\begin{figure}[ht]
	\centering
\includegraphics[width=9cm, height=6cm]{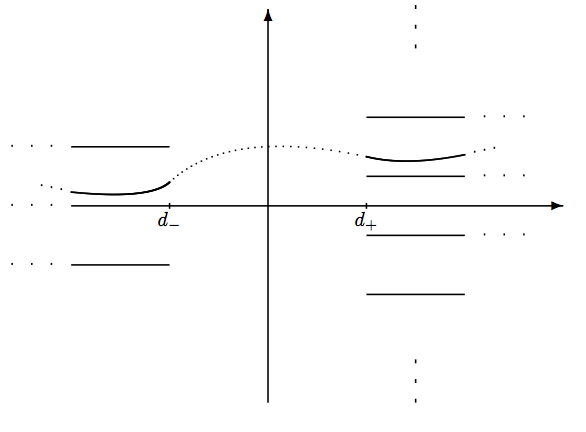}
\end{figure}

\subsubsection{Behavior for large positive $x$}
An important tool throughout this subsection is the map $W^r_{m_0,m_1}:\R\rightarrow\R$ defined by $x\mapsto\frac{1}{2}r'(x)^2+\beta_{m_0,m_1}(x)\sin^2r(x)$,
which turns out to be a Lyapunov function.

\begin{lemma}
\label{increase}
Either $W_{m_0,m_1}^r$ increases strictly on $[Z_{m_0,m_1}^{\alpha},\infty)$ or $W_{m_0,m_1}^r\equiv 0$.
If the latter case occurs then $r$ is constant.
\end{lemma}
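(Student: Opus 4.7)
My plan is to treat $W_{m_0,m_1}^r$ as a Lyapunov quantity: differentiate it along the ODE, observe that the result has a manifestly non-negative expression on $[Z_{m_0,m_1}^{\alpha},\infty)$, and then rule out any plateau. Inserting $r''=\alpha_{m_0,m_1}r'-\beta_{m_0,m_1}\sin 2r$ into $\tfrac{d}{dx}\bigl(\tfrac12 r'(x)^2+\beta_{m_0,m_1}(x)\sin^2 r(x)\bigr)$, the two terms proportional to $\beta_{m_0,m_1}\sin 2r\cdot r'$ should cancel, yielding the clean identity
\[(W_{m_0,m_1}^r)'(x)=\alpha_{m_0,m_1}(x)\,r'(x)^2+\beta_{m_0,m_1}'(x)\sin^2 r(x).\]

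The sign analysis is then immediate. By the definition of $Z_{m_0,m_1}^{\alpha}$, the first coefficient is non-negative on $[Z_{m_0,m_1}^{\alpha},\infty)$; and since $\beta_{m_0,m_1}=\tfrac12\alpha_{m_0+1,m_1+1}$, I can differentiate explicitly to obtain $\beta_{m_0,m_1}'(x)=\tfrac{m_0+m_1}{4}\operatorname{sech}^2 x>0$ for every $x\in\R$. Hence $(W_{m_0,m_1}^r)'\geq 0$ on the interval in question, so $W_{m_0,m_1}^r$ is at least non-decreasing there.

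For the dichotomy, suppose $W_{m_0,m_1}^r$ is not strictly increasing on $[Z_{m_0,m_1}^{\alpha},\infty)$. Then it is constant on some open subinterval $(a,b)$, which is automatically contained in $(Z_{m_0,m_1}^{\alpha},\infty)$ since $a\geq Z_{m_0,m_1}^{\alpha}$ and $(a,b)$ does not contain its left endpoint. On $(a,b)$ both $\alpha_{m_0,m_1}$ and $\beta_{m_0,m_1}'$ are strictly positive, so the vanishing of the non-negative sum $\alpha_{m_0,m_1}(r')^2+\beta_{m_0,m_1}'\sin^2 r$ forces $r'\equiv 0$ and $\sin r\equiv 0$; hence $r\equiv k\pi$ on $(a,b)$ for some $k\in\Z$. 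Since $r\equiv k\pi$ is itself a solution of the $(m_0,m_1)$-ODE and the equation has smooth coefficients at every finite $x$ (its singular points $t=0$ and $t=\pi/2$ correspond to $x=\pm\infty$), uniqueness of the initial value problem at any interior point of $(a,b)$ propagates this identity to all of $\R$, so $r$ is globally constant and $W_{m_0,m_1}^r\equiv 0$. The main subtlety is precisely this passage from local constancy of the Lyapunov function to global constancy of $r$ via ODE uniqueness; the energy cancellation and sign analysis themselves are entirely routine.
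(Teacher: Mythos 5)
Your proof is correct, and it follows the paper's proof in its two main ingredients: the identity $(W_{m_0,m_1}^r)'=\alpha_{m_0,m_1}(r')^2+\tfrac{m_0+m_1}{4\cosh^2x}\sin^2r$ (your $\beta_{m_0,m_1}'=\tfrac{m_0+m_1}{4}\operatorname{sech}^2x$ is the same quantity) and the appeal to Picard--Lindel\"of to upgrade ``$r$ constant somewhere'' to ``$r$ constant everywhere.'' Where you genuinely diverge is in how the strictness dichotomy is organized. The paper argues pointwise: it picks a single $x_0\geq Z_{m_0,m_1}^{\alpha}$ with $(W^r)'(x_0)=0$, deduces $r(x_0)=\ell_0\pi$, and then must split into cases according to whether $r'(x_0)$ also vanishes --- which fails to be automatic exactly when $\alpha_{m_0,m_1}(x_0)=0$, i.e.\ at $x_0=Z_{m_0,m_1}^{\alpha}$ or when $(m_0,m_1)=(1,1)$; in that subcase it shows the zero of $(W^r)'$ is isolated so strict monotonicity survives. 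You instead use that a non-decreasing function which is not strictly increasing is constant on an open subinterval, and work with $(W^r)'\equiv 0$ on that interval; this buys you a cleaner argument that never has to worry about isolated zeros of $(W^r)'$. One small imprecision: your claim that $\alpha_{m_0,m_1}>0$ on $(a,b)$ fails for $(m_0,m_1)=(1,1)$, where $\alpha_{1,1}\equiv 0$ (the paper treats this case separately). It does not damage your proof, because $\beta_{m_0,m_1}'>0$ alone already forces $\sin r\equiv 0$ on $(a,b)$, whence $r$ is locally constant there and $r'\equiv 0$ follows for free rather than from the positivity of $\alpha_{m_0,m_1}$; but the sentence as written should be weakened to ``$\alpha_{m_0,m_1}\geq 0$ and $\beta_{m_0,m_1}'>0$.''
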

\begin{proof}
By using the $(m_0,m_1)$-ODE we get
\begin{align*}
\tfrac{d}{dx}{W_{m_0,m_1}^{r}}(x)=\alpha_{m_0,m_1}(x)r'(x)^2+\tfrac{m_0+m_1}{4\cosh^2x}\sin^2r(x).
\end{align*}
Thus $\tfrac{d}{dx}W_{m_0,m_1}^{r}(x)\geq 0$ for $x\geq Z_{m_0,m_1}^{\alpha}$. If $W_{m_0,m_1}^r$ increases strictly there is nothing to prove.
Hence we may assume that there exists a point $x_0\geq  Z_{m_0,m_1}^{\alpha}$ such that $\tfrac{d}{dx}W_{m_0,m_1}^{r}(x_0)=0$, which implies $r(x_0)=\ell_0\pi$ for an $\ell_0\in\Z$. 

\smallskip

If $(m_0,m_1)\neq (1,1)$ and $x_0> Z_{m_0,m_1}^{\alpha}$, then $\tfrac{d}{dx}W_{m_0,m_1}^{r}(x_0)=0$ also yields $r'(x_0)=0$. Hence, by the theorem of Picard-Lindel\"of we have $r\equiv \ell_0\pi$ and thus $W_{m_0,m_1}^r\equiv 0$. 

\smallskip

Hence only the cases $(m_0,m_1)=(1,1)$ or $x_0=Z_{m_0,m_1}^{\alpha}$ is satisfied remain to consider.
If also $r'(x_0)=0$ then the same argument as above yields $W_{m_0,m_1}^r\equiv 0$.
Finally, if $r'(x_0)\neq 0$ there exists a connected neighborhood $U\subset[Z_{m_0,m_1}^{\alpha},\infty)$ of $x_0$ such that
 $r'(x)\neq 0$ and $r(x)\neq k\pi$ for all $x\in U-\left\{x_0\right\}$. 
 Consequently, $\tfrac{d}{dx}W_{m_0,m_1}^{r}(x)>0$ for all $x\in U-\left\{x_0\right\}$ and thus $W_{m_0,m_1}^r$ increases strictly.
\end{proof}

Using the above lemma we show that on the interval $\lbrack Z_{m_0,m_1}^{\alpha},\infty)$ the first derivative of any solution $r$ of the $(m_0,m_1)$-BVP is bounded.

\begin{lemma}
\label{bound}
For any solution $r$ of the $(m_0,m_1)$-ODE with $\lim_{x\rightarrow\infty}r(x)=(2\ell+1)\tfrac{\pi}{2}$, $\ell\in\Z$, we have $\lvert r'(x)\lvert\leq \sqrt{m_1}$ for $x\geq Z_{m_0,m_1}^{\beta}$ and $\lvert r'(x)\lvert\leq \sqrt{m_1+1}$ for $x\geq Z_{m_0,m_1}^{\alpha}$.
\end{lemma}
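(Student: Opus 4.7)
The plan is to combine the Lyapunov-type monotonicity from Lemma~\ref{increase} with the value of $W_{m_0,m_1}^r$ at $x=+\infty$. First I would compute $\lim_{x\to\infty}W_{m_0,m_1}^r(x)=m_1/2$: since $\lim_{x\to\infty}\beta_{m_0,m_1}(x)=\tfrac{1}{4}((m_0+m_1)+(m_1-m_0))=m_1/2$ and $\sin^2 r(x)\to 1$ from the boundary condition, this reduces to showing $r'(x)\to 0$.

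To establish $r'(x)\to 0$ I would argue by contradiction. By Lemma~\ref{increase}, $W_{m_0,m_1}^r$ is monotonically increasing on $[Z_{m_0,m_1}^{\alpha},\infty)$, and $\beta_{m_0,m_1}(x)\sin^2 r(x)$ has a limit, so $r'(x)^2$ has a limit $L\in[0,+\infty]$. If $L>0$, then $|r'|$ is eventually bounded below by a positive constant; in particular $r'$ has no zero for large $x$ and must be of constant sign, forcing $r(x)\to\pm\infty$ and contradicting $r(x)\to(2\ell+1)\tfrac{\pi}{2}$. Hence $L=0$ and the asserted limit of $W_{m_0,m_1}^r$ follows.

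Combining with Lemma~\ref{increase} yields $W_{m_0,m_1}^r(x)\leq m_1/2$ for $x\geq Z_{m_0,m_1}^{\alpha}$. For $x\geq Z_{m_0,m_1}^{\beta}$ one has $\beta_{m_0,m_1}(x)\sin^2 r(x)\geq 0$, so $\tfrac{1}{2}r'(x)^2\leq m_1/2$, giving the first bound. For $x\in[Z_{m_0,m_1}^{\alpha},Z_{m_0,m_1}^{\beta})$ the term can be negative; but $\beta_{m_0,m_1}$ is monotonically increasing, and a short computation using $\tanh(Z_{m_0,m_1}^{\alpha})=(m_0-m_1)/(m_0+m_1-2)$ gives
\[
\beta_{m_0,m_1}(Z_{m_0,m_1}^{\alpha})=\frac{m_0-m_1}{2(m_0+m_1-2)}\geq -\tfrac{1}{2},
\]
the inequality using only $m_0\geq 1$. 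Hence $r'(x)^2\leq m_1+1$, which is the second bound. (The boundary case $m_0=1$, where $Z_{m_0,m_1}^{\alpha}=-\infty$, is covered by the same estimate, with $\beta_{m_0,m_1}$ bounded below by its limit $-1/2$ at $-\infty$.)

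The main obstacle is the asymptotic step $r'(x)\to 0$: the convergence of $r$ to a limit does not automatically transfer to its derivative, and a moment's care is needed to rule out oscillatory behaviour of $r'$. Once this is in hand, the rest of the argument is a direct application of the monotonicity of $W_{m_0,m_1}^r$ and an elementary algebraic computation.
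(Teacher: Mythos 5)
Your proposal is correct and follows essentially the same route as the paper: monotonicity of the Lyapunov function $W^r_{m_0,m_1}$ from Lemma~\ref{increase}, the limit $W^r_{m_0,m_1}(x)\to m_1/2$, positivity of both summands for $x\geq Z^{\beta}_{m_0,m_1}$, and the bound $\beta_{m_0,m_1}(Z^{\alpha}_{m_0,m_1})\geq -\tfrac12$ on $[Z^{\alpha}_{m_0,m_1},Z^{\beta}_{m_0,m_1}]$. The only difference is that you spell out why $r'(x)\to 0$ (ruling out a positive limit of $r'(x)^2$ via eventual monotonicity of $r$), a step the paper asserts without comment; that is a welcome addition rather than a divergence.
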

\begin{proof}
If $r$ is constant there is nothing to prove. Hence we may assume that $r$ is non-constant.
Using the assumption $\lim_{x\rightarrow\infty}r(x)=(2\ell+1)\tfrac{\pi}{2}$, $\ell\in\Z$, we obtain $\lim_{x\rightarrow \infty}W^r_{m_0,m_1}(x)=\tfrac{m_1}{2}$.
Since for $x\geq Z_{m_0,m_1}^{\beta}$ both summands in the definition of $W^r_{m_0,m_1}(x)$ are positive and $W^r_{m_0,m_1}$ increases strictly on the interval $[Z_{m_0,m_1}^{\alpha},\infty)$, we get $\frac{m_1}{2}\geq W^r_{m_0,m_1}(x)\geq\frac{1}{2}r'(x)^2$ for $x\geq Z_{m_0,m_1}^{\beta}$, whence the first claim.
Furthermore,
\begin{align*}
\tfrac{m_1}{2}\geq W_{m_0,m_1}^r(x)\geq \tfrac{1}{2}r'(x)^2+\beta_{m_0,m_1}(Z_{m_0,m_1}^{\alpha})\sin^2r(x)\geq\tfrac{1}{2}r'(x)^2-\tfrac{1}{2}
\end{align*}
for all $x\in[Z_{m_0,m_1}^{\alpha},Z_{m_0,m_1}^{\beta}]$, whence the second claim.
\end{proof}

In the following lemma we prove that each solution of the $(m_0,m_1)$-ODE either converges to $\pm$ infinity
or to $(2\ell_0+1)\frac{\pi}{2}$, $\ell_0\in\Z$, as $x$ converges to infinity. 

\begin{lemma}
\label{infinity}
Let $m_1\geq 2$ and $r$ be a non-constant solution of the $(m_0,m_1)$-ODE.
Either there exists $\ell_0\in\Z$ such that $\lim_{x\rightarrow\infty} r(x)=(2\ell_0+1)\frac{\pi}{2}$ or
$\lim_{x\rightarrow\infty} r(x)=\pm\infty$.
\end{lemma}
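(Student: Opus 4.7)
The plan is to study the Lyapunov function $W := W^r_{m_0,m_1}$ from Lemma~\ref{increase}. Since $r$ is non-constant, Lemma~\ref{increase} tells me that $W$ is strictly increasing on $[Z^\alpha_{m_0,m_1},\infty)$, so the limit $L := \lim_{x\to\infty}W(x)$ exists in $\R\cup\{+\infty\}$. I would handle the cases $L=+\infty$ and $L<\infty$ separately.

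If $L=+\infty$: both $\beta_{m_0,m_1}(x)\to m_1/2$ and $\sin^2 r(x)$ are bounded, so the identity
\[r'(x)^2 \;=\; 2W(x)-2\beta_{m_0,m_1}(x)\sin^2 r(x)\]
forces $|r'(x)|\to\infty$. Hence $r'$ has constant sign for all sufficiently large $x$ and $r(x)\to\pm\infty$.

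If $L<\infty$: the strategy is a Barb\u{a}lat-type argument to force $r'\to 0$. The same identity shows that $r'$ is bounded for large $x$, and then the ODE shows $r''$ is bounded, so $(r')^2$ is uniformly continuous on some $[x_0,\infty)$. Integrating the formula
\[\tfrac{d}{dx}W(x)=\alpha_{m_0,m_1}(x)r'(x)^2+\tfrac{m_0+m_1}{4\cosh^2 x}\sin^2 r(x)\]
from the proof of Lemma~\ref{increase} against the finiteness of $L$ shows that $\alpha_{m_0,m_1}(x)r'(x)^2$ is integrable on $[Z^\alpha_{m_0,m_1},\infty)$; since $\alpha_{m_0,m_1}(x)\to m_1-1\geq 1$ by the assumption $m_1\geq 2$, we get $(r')^2\in L^1$ at infinity. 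Barb\u{a}lat then yields $r'(x)\to 0$, and passing to the limit in $W=\tfrac12(r')^2+\beta_{m_0,m_1}\sin^2 r$ with $\beta_{m_0,m_1}\to m_1/2$ gives $\sin^2 r(x)\to L_1:=2L/m_1$.

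The heart of the argument is showing $L_1=1$. First, $L_1>0$ comes for free: $W\geq 0$ on $[\max(Z^\alpha_{m_0,m_1},Z^\beta_{m_0,m_1}),\infty)$ together with strict monotonicity of $W$ to $L$ rules out $L=0$. To exclude $L_1\in(0,1)$, suppose $L_1<1$; then $\sin^2 r(x)$ is eventually bounded away from both $0$ and $1$, so the continuous function $r$ avoids $\tfrac{\pi}{2}\Z$ for large $x$ and is trapped in a single interval of the form $(k\pi,k\pi+\tfrac{\pi}{2})$ or $(k\pi+\tfrac{\pi}{2},(k+1)\pi)$; on such an interval $\sin^2$ is strictly monotone, so $r(x)\to r_\infty\notin\tfrac{\pi}{2}\Z$. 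The ODE then gives $r''(x)\to-\tfrac{m_1}{2}\sin 2r_\infty\neq 0$, which is incompatible with $r'\to 0$. Hence $L_1=1$, so $r$ is eventually close to $\tfrac{\pi}{2}+\pi\Z$; continuity confines it to a neighbourhood of a single odd multiple $(2\ell_0+1)\tfrac{\pi}{2}$, and $\sin^2 r\to 1$ then forces $r(x)\to(2\ell_0+1)\tfrac{\pi}{2}$.

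I expect the main technical obstacle to be the exclusion of $L_1\in(0,1)$. The Lyapunov function alone does not distinguish odd from non-odd critical values of $\sin^2$; one must interplay the Hamiltonian-type structure (through $W$ and Barb\u{a}lat) with the ODE (which forces a non-zero limit of $r''$ if $r$ stabilises at a non-critical value) and with continuity of $r$ (which prevents the ``phase'' from jumping between distinct wells).
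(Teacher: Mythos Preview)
Your argument is correct and rests on the same Lyapunov function $W^r_{m_0,m_1}$ as the paper, but the organization differs in a useful way. The paper case-splits on whether $r'\to 0$; in the case $r'\not\to 0$ it asserts rather tersely that $\tfrac{d}{dx}W^r_{m_0,m_1}\not\to 0$ forces $W^r_{m_0,m_1}\to\infty$ and hence $(r')^2\to\infty$, while in the case $r'\to 0$ it jumps directly from ``$\sin^2 r$ has a limit'' to ``$r$ has a finite limit'' without spelling out the trapping argument. Your case-split on $L=\lim W$ is cleaner: the Barb\u{a}lat step (boundedness of $r''$, integrability of $(r')^2$) makes the implication $L<\infty\Rightarrow r'\to 0$ explicit, and your exclusion of $L_1\in(0,1)$ via monotonicity of $\sin^2$ on half-intervals is exactly what is needed to pass from convergence of $\sin^2 r$ to convergence of $r$. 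The paper then rules out $L=\ell_0\pi$ by noting $\lim W=0$ together with $W\ge 0$ at $Z^\beta_{m_0,m_1}$ and strict monotonicity force $W\equiv 0$; your version (ruling out $L_1=0$) is the same observation phrased in terms of $L_1$. So the two proofs are equivalent in spirit, but yours fills in the analytic details the paper leaves to the reader.
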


\begin{proof}
If $\lim_{x\rightarrow\infty}r'(x)=0$ then by Lemma\,\ref{increase} $\displaystyle\lim_{x\rightarrow\infty}W_{m_0,m_1}^r(x)=\tfrac{m_1}{2}\displaystyle\lim_{x\rightarrow\infty}\sin^2r(x)$ exists. Thus $L:=\lim_{x\rightarrow\infty}r(x)$ exists and is finite and the $(m_0,m_1)$-ODE implies
$\lim_{x\rightarrow\infty}r''(x)=-\frac{m_1}{2}\sin2L$. Consequently, $L= \ell_0\pi$ or $L= \ell_0\pi+\frac{\pi}{2}$ for an $\ell_0\in\Z$.

\smallskip

If $L=\ell_0\pi$ we get $\lim_{x\rightarrow\infty}W_{m_0,m_1}^r(x)=0$. Since $W_{m_0,m_1}^r(0)\geq 0$, Lemma\,\ref{increase} implies
$W_{m_0,m_1}^r\equiv 0$. However, this in turn yields $r\equiv \ell_0\pi$ which contradicts the assumption that $r$ is non-constant.
Hence $\lim_{x\rightarrow\infty} r'(x)=0$ implies $\lim_{x\rightarrow\infty}r(x)= \ell_0\pi+\frac{\pi}{2}$.

\smallskip

When $\lim_{x\rightarrow\infty} r'(x)\neq 0$ we get $\lim_{\substack{x\rightarrow\infty}}\tfrac{d}{dx}W_{m_0,m_1}^{r}(x)\neq 0$. Since
 $\tfrac{d}{dx}W_{m_0,m_1}^{r}(x)\geq 0$ for all $x\geq Z_{m_0,m_1}^{\alpha}$ we get
$\lim_{x\rightarrow\infty} W_{m_0,m_1}^r(x)=\infty$ and thus $\lim_{x\rightarrow\infty} r'(x)^2=\infty$.
Hence, for every $\epsilon\in\R_+$ there exists a point $x_0\in\R$ such that $\lvert r'(x)\lvert > \epsilon$ for all $x>x_0$.
Thus $\lim_{x\rightarrow\infty} r(x)=\pm\infty$.
\end{proof}

So far we have not find any restrictions for the possible $\ell_0\in\Z$ - this will be done in Section\,\ref{sec5}.
In the following lemma we improve the result of Lemma\,\ref{bound}. 

\begin{lemma}
\label{bounded23}
\renewcommand{\labelenumi}{(\roman{enumi})}
For $m_1\geq 2$ let $B=\tfrac{m_1}{2(m_1-1)}$. There exists $c_{m_0,m_1}\in\R$ such that
\begin{enumerate}
\item if $r'(x_0)>B$ for an $x_0>c_{m_0,m_1}$ then $\lim_{x\rightarrow\infty}r(x)=\infty$,
\item if $r'(x_0)<-B$ for an $x_0>c_{m_0,m_1}$ then $\lim_{x\rightarrow\infty}r(x)=-\infty$.
\end{enumerate}
\end{lemma}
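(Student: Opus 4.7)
The plan is to reduce both assertions to the invariance claim: if $r'(x_0) > B$ for some $x_0 > c_{m_0,m_1}$, then $r'(x) > B$ for every $x > x_0$. Granted this, $r(x) \geq r(x_0) + B(x-x_0)\to\infty$, so the finite-limit alternative of Lemma \ref{infinity} (which would force $r'\to 0$) is ruled out and (i) follows. Assertion (ii) is then immediate by the symmetry $r\mapsto -r$: since $\sin 2(-r) = -\sin 2r$, the map $\tilde r := -r$ also solves the $(m_0,m_1)$-ODE, and applying (i) to $\tilde r$ yields (ii).

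The algebraic crux is the identity
\[
\alpha_{m_0,m_1}(x)\, B - \beta_{m_0,m_1}(x) = \frac{m_1 - m_0}{4(m_1 - 1)}(1 - \tanh x),
\]
whose sign coincides with that of $m_1 - m_0$. Choose $c_{m_0,m_1}$ large enough that $\alpha_{m_0,m_1}$ and $\beta_{m_0,m_1}$ are both positive on $(c_{m_0,m_1},\infty)$. Assume for contradiction that $x_1 := \inf\{x > x_0 : r'(x) = B\}$ is finite. Continuity gives $r'(x) > B$ on $[x_0,x_1)$, $r'(x_1) = B$, and $r''(x_1) \leq 0$; substituting into the $(m_0,m_1)$-ODE yields $\alpha(x_1) B \leq \beta(x_1)\sin 2r(x_1)$. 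When $m_1 > m_0$ the identity forces $\alpha(x_1)B > \beta(x_1)\geq \beta(x_1)\sin 2r(x_1)$, a contradiction. In the borderline case $m_1 = m_0$, where $\beta \equiv B\alpha$, the inequality saturates only at $\sin 2r(x_1) = 1$ (so $\cos 2r(x_1) = 0$) and $r''(x_1) = 0$. Differentiating the ODE twice more and exploiting the further cancellation $\alpha''(x) B - \beta''(x) = 0$ valid for $m_0 = m_1$, one finds
\[
r'''(x_1) = 4\beta(x_1)\, B^2 > 0.
\]
The Taylor expansion $r'(x) - B = \tfrac{r'''(x_1)}{6}(x - x_1)^3 + O\!\left((x - x_1)^4\right)$ therefore yields $r'(x) < B$ for $x$ slightly below $x_1$, contradicting $r' > B$ on $[x_0, x_1)$. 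Hence no touchdown occurs, proving the invariance claim.

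The main obstacle is the degenerate case $m_0 = m_1$: the first-order sign argument via $\alpha B - \beta$ is there vacuous, so the Taylor analysis of $r' - B$ at the putative touchdown has to be pushed to third order. This succeeds only because of the twin identities $\beta = B\alpha$ and $\alpha''B = \beta''$, both of which are special to $m_0 = m_1$; verifying the second requires some care with the hyperbolic derivatives, but once it is in hand the third-order expansion closes the argument.
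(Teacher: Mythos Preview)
Your argument is essentially correct, but it takes a different route from the paper, and there is a notational slip worth flagging.

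\textbf{The slip.} In the case $m_0=m_1$ you write $r'''(x_1)=4\beta(x_1)B^2$. In fact $r'''(x_1)=\alpha'(x_1)B-\beta'(x_1)=0$ (because $\beta\equiv B\alpha$ implies $\beta'=B\alpha'$), and it is $r^{(4)}(x_1)=\alpha''(x_1)B-\beta''(x_1)+4\beta(x_1)B^2=4\beta(x_1)B^2>0$. Your own phrase ``differentiating the ODE twice more'' and the Taylor coefficient $\tfrac{1}{6}$ (which is $1/3!$, matching the third derivative of $r'$) both point to $r^{(4)}$; only the label is wrong. With that correction the cubic expansion $r'(x)-B=\tfrac{r^{(4)}(x_1)}{6}(x-x_1)^3+O((x-x_1)^4)$ does give $r'<B$ just below $x_1$, and the contradiction closes.

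\textbf{Comparison with the paper.} The paper avoids the degenerate-touchdown analysis altogether by tracking $r''$ rather than $r'$. Writing $q=\beta/\alpha$, one has $r''(x)>0\Longleftrightarrow r'(x)>q(x)\sin 2r(x)$ for $x>Z^{\alpha}_{m_0,m_1}$, and the paper chooses $c_{m_0,m_1}$ so that $|q|\le B$ on $(c_{m_0,m_1},\infty)$ (this is automatic when $m_0=m_1$, where $q\equiv B$). From $r'(x_0)>B\ge q(x_0)\sin 2r(x_0)$ one gets $r''(x_0)>0$; at a first putative zero $x_1$ of $r''$ one would have $r'(x_1)>r'(x_0)>B\ge q(x_1)\sin 2r(x_1)$, which forces $r''(x_1)>0$ --- a contradiction. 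This argument is uniform in $(m_0,m_1)$ and needs no higher-order Taylor step. Your approach, by contrast, shows directly that the half-line $\{r'>B\}$ is forward-invariant; this is conceptually natural and your identity $\alpha B-\beta=\tfrac{m_1-m_0}{4(m_1-1)}(1-\tanh x)$ dispatches the case $m_1>m_0$ in one line, but the price is the third-order analysis when $m_1=m_0$.
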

\begin{proof}
 We introduce the quotient function $q_{m_0,m_1}:\R\rightarrow\R\cup\left\{\pm\infty\right\}, x\mapsto\tfrac{\beta_{m_0,m_1}(x)}{\alpha_{m_0,m_1}(x)}$.
 If $m_0<m_1$ then $q_{m_0,m_1}$ increases strictly on the interval $\I=(Z_{m_0,m_1}^{\alpha},\infty)$ and satisfies
$\mbox{Im}({q_{m_0,m_1}}_{\lvert \I})=(-\infty,B).$
The unique solution $x>Z_{m_0,m_1}^{\alpha}$ of $q_{m_0,m_1}(x)=-B$ is denoted by
$c_{m_0,m_1}$. If $m_0=m_1=:m$ then $q_{m,m}=B$ and we set $c_{m,m}=0$.

\smallskip

The strategy for the proof of $(i)$ is the following: we show that the existence of a point $x_0>c_{m_0,m_1}$ with $r'(x_0)>B$ implies $r''(x)>0$ for all $x\geq x_0$. Consequently, $r'(x)\geq r'(x_0)>B>0$ for all $x\geq x_0$
and thus $\lim_{x\rightarrow\infty}r(x)=\infty$.

\smallskip

First we prove $r''(x_0)>0$: since $\lvert q_{m_0,m_1}(x)\lvert\leq B$ for $x\geq c_{m_0,m_1}$ we have
$$q_{m_0,m_1}(x)\sin2r(x)\leq B\hspace{0.2cm}\mbox{for}\hspace{0.2cm}x\geq c_{m_0,m_1}.$$ 
Thus $B<r'(x_0)$ yields $q_{m_0,m_1}(x_0)\sin2r(x_0)< r'(x_0).$
The $(m_0,m_1)$-ODE implies
\begin{align*}
q_{m_0,m_1}(x)\sin2r(x)< r'(x)\hspace{0.3cm}\Leftrightarrow\hspace{0.3cm}r''(x)>0\hspace{0.5cm}\mbox{for all}\hspace{0.2cm}x>Z_{m_0,m_1}^{\alpha}.
\end{align*}
Since $x_0>c_{m_0,m_1}\geq Z_{m_0,m_1}^{\alpha}$, we thus obtain $r''(x_0)>0$.

\smallskip

Next suppose that there exists a point $x_1>x_0$ such that $r''(x_1)=0$ 
and $r''(x)>0$ for all $x\in\left[x_0,x_1\right)$.
Hence, $r'(x)>r'(x_0)>B$ for all $x\in\left(x_0,x_1\right)$.
Since $r'$ is a continuous function we thus obtain $r'(x_1)> B\geq q_{m_0,m_1}(x_1)\sin2r(x_1)$. This inequality is equivalent to $r''(x_1)>0$ contradicting our assumption.
Therefore $r''(x)>0$ for all $x\geq x_0$ and thus $r'(x)\geq r'(x_0) \geq B$ for all $x\geq x_0$. Hence,
$\lim_{x\rightarrow\infty}r(x)=\infty$.

\smallskip

The second statement is obtained by the first by considering $-r$.
\end{proof}

The next lemma states that for large enough $x$ the graph of each solution of the $(m_0,m_1)$-BVP is contained in a stripe of height $\pi$.

\begin{lemma}
\label{streifen}
For $m_1\geq 2$ there exists $d_{m_0,m_1}^{+}\in\R$ such that one of the following three cases arises:
\renewcommand{\labelenumi}{(\roman{enumi})}
\begin{enumerate}
\item there exists an $\ell_0\in\Z$ with $(2\ell_0+1)\tfrac{\pi}{2}\leq r(x)\leq (2\ell_0+3)\tfrac{\pi}{2}$ for all 
$x\geq d_{m_0,m_1}^{+}$. Then either $r\equiv (2\ell_0+1)\tfrac{\pi}{2}, (2\ell_0+2)\tfrac{\pi}{2}, (2\ell_0+3)\tfrac{\pi}{2}$
or, if $r$ is non-constant, $\lim_{x\rightarrow\infty}r(x)=(2\ell_0\pm 1)\tfrac{\pi}{2}$.
\item there exist $x_0\geq d_{m_0,m_1}^{+}$ and $\ell_0\in\Z$ such that $r(x_0)=(2\ell_0+3)$ and $r'(x_0)>0$. Then  $\lim_{x\rightarrow\infty}r(x)=\infty$.
\item there exist $x_0\geq d_{m_0,m_1}^{+}$ and $\ell_0\in\Z$ such that $r(x_0)=(2\ell_0+3)$ and $r'(x_0)<0$. Then  $\lim_{x\rightarrow\infty}r(x)=-\infty$.
\end{enumerate}
\end{lemma}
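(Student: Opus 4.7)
The plan is to choose $d_{m_0,m_1}^{+}\geq\max\{c_{m_0,m_1},Z_{m_0,m_1}^{\beta}\}$ large enough so that $\beta_{m_0,m_1}(d_{m_0,m_1}^{+})>B^{2}/2$, where $B=m_1/(2(m_1-1))$ is the constant from Lemma \ref{bounded23}. Since $\beta_{m_0,m_1}$ is strictly increasing with limit $m_1/2$, and a short calculation gives $m_1/2>B^{2}/2$ for every $m_1\geq 2$, this is possible.

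The heart of the argument is case (ii); case (iii) follows by applying (ii) to $-r$, which again solves the $(m_0,m_1)$-ODE, and case (i) is the residual alternative. Suppose $r(x_0)=(2\ell_0+3)\tfrac{\pi}{2}$ with $r'(x_0)>0$ and $x_0\geq d_{m_0,m_1}^{+}$. Since $\sin 2r(x_0)=0$ and $\alpha_{m_0,m_1}(x_0)>0$, the $(m_0,m_1)$-ODE gives $r''(x_0)>0$. On the open interval where $r\in\bigl((2\ell_0+3)\tfrac{\pi}{2},(2\ell_0+4)\tfrac{\pi}{2}\bigr)$ we have $\sin 2r<0$ and $\beta_{m_0,m_1}>0$, so $-\beta_{m_0,m_1}\sin 2r>0$; together with $r'>0$ this forces $r''>0$, keeping $r'$ bounded below by $r'(x_0)$ and $r$ strictly increasing on the maximal such interval. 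Lemma \ref{infinity} rules out convergence of $r$ inside $\bigl((2\ell_0+3)\tfrac{\pi}{2},(2\ell_0+4)\tfrac{\pi}{2}\bigr]$ since no odd multiple of $\tfrac{\pi}{2}$ lies there, so $r$ reaches $(2\ell_0+4)\tfrac{\pi}{2}$ at some finite $x_1>x_0$. Because $\sin r(x_1)=0$ and $\sin^{2}r(x_0)=1$,
$$W^{r}_{m_0,m_1}(x_1)=\tfrac{1}{2}r'(x_1)^{2},\qquad W^{r}_{m_0,m_1}(x_0)=\tfrac{1}{2}r'(x_0)^{2}+\beta_{m_0,m_1}(x_0),$$
and Lemma \ref{increase} yields $r'(x_1)^{2}\geq 2\beta_{m_0,m_1}(x_0)>B^{2}$. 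Since $x_1>c_{m_0,m_1}$, Lemma \ref{bounded23}(i) concludes $\lim_{x\to\infty}r(x)=\infty$.

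For case (i), assume neither (ii) nor (iii) occurs, so $r$ never meets an odd multiple $(2\ell+1)\tfrac{\pi}{2}$ with nonzero derivative on $[d_{m_0,m_1}^{+},\infty)$. If $r(x_{\star})=k\tfrac{\pi}{2}$ with $r'(x_{\star})=0$ for some $k\in\Z$, then $\sin 2r(x_{\star})=0$ gives $r''(x_{\star})=0$, and uniqueness for the initial value problem at $x_{\star}$ forces $r\equiv k\tfrac{\pi}{2}$, producing the three constant solutions listed. Otherwise $r$ avoids all odd multiples of $\tfrac{\pi}{2}$ past $d_{m_0,m_1}^{+}$, so by continuity it is confined to a single open strip $\bigl((2\ell_0+1)\tfrac{\pi}{2},(2\ell_0+3)\tfrac{\pi}{2}\bigr)$; Lemma \ref{infinity} then forces convergence to one of the two odd-multiple endpoints of the strip.

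The main obstacle is case (ii): one must show that an \emph{arbitrarily} small positive velocity at the odd-multiple crossing is amplified past the threshold $B$ of Lemma \ref{bounded23} by the time $r$ reaches the next even multiple. The Lyapunov function $W^{r}_{m_0,m_1}$ performs this amplification because $\sin^{2}r$ drops from $1$ to $0$ over that stretch and monotonicity of $W^{r}_{m_0,m_1}$ converts the ``potential'' $\beta_{m_0,m_1}(x_0)$ into kinetic energy $\tfrac{1}{2}r'(x_1)^{2}$; the threshold condition $\beta_{m_0,m_1}(d_{m_0,m_1}^{+})>B^{2}/2$ is exactly what forces $r'(x_1)>B$.
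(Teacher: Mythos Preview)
Your proof is correct and follows essentially the same route as the paper: the same choice of $d_{m_0,m_1}^{+}$ (the paper takes it to be the solution of $2\beta_{m_0,m_1}(x)=B^{2}$, which amounts to your condition), the same convexity argument on the interval $((2\ell_0+3)\tfrac{\pi}{2},(2\ell_0+4)\tfrac{\pi}{2})$ to reach the next even multiple at some $x_1$, and the same use of the monotonicity of $W_{m_0,m_1}^{r}$ together with Lemma~\ref{bounded23} to force $r'(x_1)>B$ and hence $r\to\infty$. The only cosmetic differences are that the paper phrases the convexity step as a proof by contradiction rather than a bootstrap, and your appeal to Lemma~\ref{infinity} to conclude $x_1<\infty$ is unnecessary since $r'\geq r'(x_0)>0$ already gives linear growth.
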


\begin{proof}
The equation $2\beta_{m_0,m_1}(x)=B^2$ has a unique solution which we denote by $d_{m_0,m_1}^{+}$.

\smallskip

Assume that there exist $\ell_0\in\Z$ and $x_0\geq d_{m_0,m_1}^{+}\in\R$ such that $r(x_0)=(2\ell_0+3)\frac{\pi}{2}$.
If $r$ is a solution of the $(m_0,m_1)$-ODE, so is $r+j\pi$ for each $j\in\Z$.
Thus we may without loss of generality assume $\ell_0=-1$.
If $r'(x_0)=0$ the theorem of Picard-Lindel\"of implies $r\equiv\frac{\pi}{2}$.
Hence only the case $r'(x_0)\neq 0$ remains to consider.
We can assume without loss of generality $r'(x_0)>0$: if $r'(x_0)<0$ we consider $-r+\pi$ instead of $r$.

\smallskip

Since $x_0\geq d_{m_0,m_1}^{+}>Z_{m_0,m_1}^{\alpha}$, we get $\alpha_{m_0,m_1}(x_0)>0$. Using $r'(x_0)>0$ the $(m_0,m_1)$-ODE thus yields $r''(x_0)>0$.
In what follows we assume that there exists no point $x_1>x_0$ with $r(x_1)=\pi$ and $r'(x_1)\geq 0$.

\smallskip

If $r''(x)\geq 0$ for all $x\geq x_0$, then
$r'(x)\geq r'(x_0)>0$ for all $x\geq x_0$. However, this implies the existence of a point $x_1>x_0$ with $r(x_1)=\pi$ and $r'(x_1)\geq 0$, which contradicts our
assumption. Consequently, there exists a point $y>x_0$ with
$r''(y)=0$ such that $r''(x)>0$ and $\frac{\pi}{2}\leq r(x)<\pi$ for all $x\in\left[x_0,y\right)$.
Thus $r'(x)>r'(x_0)>0$ for all $x\in\left(x_0,y\right)$. Hence continuity of $r$ and $r'$ yield $\frac{\pi}{2}\leq r(y)\leq\pi$ and $r'(y)\geq r'(x_0)>0$, respectively. Since $y>x_0\geq d_{m_0,m_1}^{+}>Z_{m_0,m_1}^{\beta}\geq Z_{m_0,m_1}^{\alpha}$
we get $\alpha_{m_0,m_1}(y)>0$ and $\beta_{m_0,m_1}(y)>0$. Using the $(m_0,m_1)$-ODE we thus obtain $r''(y)>0$, which contradicts our assumption.

\smallskip

Therefore there exists $x_1>x_0$ with $r(x_1)=\pi$ and $r'(x_1)\geq 0$. Thus Lemma\,\ref{increase} yields
\begin{align*}
W_{m_0,m_1}^r(x_1)-W_{m_0,m_1}^r(x_0)=\tfrac{1}{2}(r'(x_1)^2-r'(x_0)^2)-\beta_{m_0,m_1}(x_0)\geq 0.
\end{align*}
Using $x_0\geq d_{m_0,m_1}^{+}\geq c_{m_0,m_1}$ and the fact that $\beta_{m_0,m_1}$ is an increasing function, we get $r'(x_1)^2>2\beta_{m_0,m_1}(x_0)\geq B^2$.
Since $r'(x_1)\geq 0$ we obtain $r'(x_1)>B$.
Hence Lemma\,\ref{bounded23} implies $\lim_{x\rightarrow\infty}r(x)=\infty$. Consequently, either $\lim_{x\rightarrow\infty}r(x)=\pm\infty$ or $(2\ell_0+1)\frac{\pi}{2}\leq r(x)\leq (2\ell_0+3)\frac{\pi}{2}$ for $x\geq d_{m_0,m_1}^{+}$.
Now the claim follows from Lemma\,\ref{infinity}.
\end{proof}

As already mentioned in Section\,\ref{sec1} there are several properties which the solutions of the $(m_0,m_1)$-BVP and the solutions of the boundary value problem considered by Bizon and Chmaj in \cite{BC} have in common. Nevertheless, there are some decisive differences. One of them is the following: while Bizon and Chmaj construct infinitely many solutions 
which are symmetric with respect to the y-axis, this is not possible for the $(m_0,m_1)$-BVP. Indeed, the previous lemma implies that any solution of the $(m_0,m_1)$-BVP which is symmetric with respect to the $y$-axis, is constant. 

\begin{theorem}
\label{gwert}
\renewcommand{\labelenumi}{(\alph{enumi})}
Let $m_1\geq 2$ and $r$ be non-constant with $\lim_{x\rightarrow -\infty}r(x)=0$. Then $r$ is either a solution of the $(m_0,m_1)$-BVP or satisfies $\lim_{x\rightarrow\infty}r(x)=\pm\infty$.\\ 
If $r$ is a solution of the $(m_0,m_1)$-BVP then there exist $d_{m_0,m_1}^{+}\in\R$ and $\ell_0\in\Z$ such that $(2\ell_0+1)\tfrac{\pi}{2}\leq r(x)\leq (2\ell_0+3)\tfrac{\pi}{2}$ for all 
$x\geq d_{m_0,m_1}^{+}$.  Furthermore, in this case there exists a point $x_0>d_{m_0,m_1}^{+}$ such that either $r'(x)\geq 0$ or $r'(x)\leq 0$, for all $x>x_0$.
\end{theorem}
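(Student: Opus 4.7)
The plan is to assemble the three conclusions from Lemmas~\ref{infinity} and~\ref{streifen}, together with a sign analysis of $\sin 2r$ near the limiting endpoint of the strip. For the dichotomy, Lemma~\ref{infinity} directly yields that $\lim_{x\to\infty}r(x)$ is either $(2\ell_0+1)\tfrac{\pi}{2}$ for some $\ell_0\in\Z$ or $\pm\infty$; combined with $\lim_{x\to-\infty}r(x)=0$, the first alternative is by definition a $(m_0,m_1)$-BVP solution.

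For the strip, I would assume $r$ solves the BVP and apply Lemma~\ref{streifen}. Its alternatives (ii) and (iii) each force $\lim_{x\to\infty}r(x)=\pm\infty$, contradicting the BVP condition, so alternative (i) must hold, producing the required $d_{m_0,m_1}^{+}$ and $\ell_0\in\Z$ with $(2\ell_0+1)\tfrac{\pi}{2}\leq r(x)\leq (2\ell_0+3)\tfrac{\pi}{2}$ on $[d_{m_0,m_1}^{+},\infty)$.

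For the eventual monotonicity I may restrict to non-constant $r$. Lemma~\ref{streifen}(i) gives that the limit $L:=\lim_{x\to\infty}r(x)$ is one of the two endpoints of the strip; I would treat the case $L=(2\ell_0+1)\tfrac{\pi}{2}$, the upper-endpoint case being handled by a mirror argument in which zeros of $r'$ become strict local maxima rather than minima. Writing $u:=r-L$, the identity $\sin 2r=-\sin 2u$ combined with $u\geq 0$ on the strip and $u(x)\to 0$ yields some $x_1\ge d_{m_0,m_1}^{+}$ past which $u(x)\in[0,\tfrac{\pi}{4})$ and hence $\sin 2r(x)\leq 0$, with equality exactly when $u(x)=0$.

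The main claim is then that $r'$ has at most one zero in $[x_1,\infty)$. Indeed, at any zero $x^{\ast}\geq x_1$ of $r'$ one has either $r(x^{\ast})=L$---in which case $r$ and the constant function $L$ share Cauchy data at $x^{\ast}$, forcing $r\equiv L$ by Picard-Lindel\"of and contradicting non-constancy---or $r(x^{\ast})>L$, in which case $\sin 2r(x^{\ast})<0$ together with $\beta_{m_0,m_1}(x^{\ast})>0$ (valid since $x_1\geq d_{m_0,m_1}^{+}>Z_{m_0,m_1}^{\beta}$) combine via the $(m_0,m_1)$-ODE to give $r''(x^{\ast})>0$, making $x^{\ast}$ a strict local minimum. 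Two distinct zeros of $r'$ in $[x_1,\infty)$ would force a local maximum of $r$ between them, itself a critical point in $[x_1,\infty)$ and therefore by the same dichotomy a strict local minimum---a contradiction. Choosing $x_0$ just past this at-most-one zero (or $x_0=x_1$ if none exists) then delivers the desired constant-sign conclusion. The main obstacle is exactly this sign-and-rigidity analysis near the limit: verifying that $\sin 2r$ has a definite sign close to the limiting endpoint and using Picard-Lindel\"of to rule out the degenerate equality case $r(x^{\ast})=L$.
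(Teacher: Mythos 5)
Your proposal is correct and follows essentially the same route as the paper: the dichotomy and the strip come from Lemmas \ref{infinity} and \ref{streifen} exactly as in the text, and the eventual monotonicity rests on the same observation that past $d_{m_0,m_1}^{+}$ one has $\alpha_{m_0,m_1},\beta_{m_0,m_1}>0$ while $\sin 2r$ has a definite sign near the limiting endpoint, with Picard--Lindel\"of ruling out the tangency $r(x^{\ast})=L$, $r'(x^{\ast})=0$. The only cosmetic difference is in the last step: the paper turns the sign information into a trapping argument ($r'<0$ forces $r''<0$ for all later $x$, driving $r$ away from its limit), whereas you classify every critical point as a strict local extremum of one fixed type and conclude there is at most one --- both are immediate consequences of the same computation.
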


\begin{proof}
To prove the claim, it is sufficient to show that in case (i) of Lemma\,\ref{streifen} there exists a point $x_0>d_{m_0,m_1}^{+}$ such that either $r'(x)\geq 0$ or $r'(x)\leq 0$, for all $x>x_0$.

\smallskip 
 
Since $r$ is non-constant we may assume without loss of generality $-\tfrac{\pi}{2}\leq r(x)\leq\tfrac{\pi}{2}$ for $x\geq d_{m_0,m_1}^{+}$ and $\lim_{x\rightarrow\infty}r(x)=\tfrac{\pi}{2}$. Thus there exists $x_1>d_{m_0,m_1}^{+}$ such that $0<r(x)<\tfrac{\pi}{2}$ for $x>x_1$. By the theorem of Picard-Lindel\"of there exists 
$x_2\geq x_1$ with $r'(x_2)\neq 0$. 

\smallskip

If $r'(x_2)<0$ the $(m_0,m_1)$-ODE together with $0<r(x)<\tfrac{\pi}{2}$ for $x>x_1$ imply $r''(x)<0$ for all $x\geq x_2$. Consequently,
$r'(x_2)<0$. By a similar argument, this in turn implies $r''(x)<0$ for all $x\geq x_2$ and thus $r'(x)<0$ for $x\geq x_2$.
Since $0<r(x_2)<\tfrac{\pi}{2}$ this contradicts $\lim_{x\rightarrow\infty}r(x)= \tfrac{\pi}{2}$. If $r'(x_2)>0$ then we have $r'(x)\geq0$ for all $x\geq x_2$, otherwise we obtain a contradiction by the same argument as above. Setting $x_0=x_2$ establishes the claim.
\end{proof}

 \subsubsection{Behavior for large negative $x$}
We introduce $V^r_{m_0,m_1}:\R\rightarrow\R, x\mapsto\frac{1}{2}r'(x)^2-\beta_{m_0,m_1}(x)\cos^2r(x)$, which turns out to be a Lyapunov function. The proof of the next lemma is omitted since it can be proved in analogy to the corresponding results of Subsection\,\ref{increase}.

\begin{lemma}
\label{bound2}
Either $V^r_{m_0,m_1}$ decreases strictly on $(-\infty,Z_{m_0,m_1}^{\alpha}]$ or $V^r_{m_0,m_1}\equiv 0$.
In any case $\lim_{x\rightarrow -\infty}V^r_{m_0,m_1}(x)\in\R\cup\left\{\infty\right\}$ exists.
If $r$ satisfies $\lim_{x\rightarrow-\infty}r(x)=k\pi$, $k\in\Z$, we have $\lvert r'(x)\lvert\leq \sqrt{m_0}$ for $x\leq Z_{m_0,m_1}^{\alpha}$.
\end{lemma}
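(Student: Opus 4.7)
The plan is to mirror the arguments of Lemma~\ref{increase} and Lemma~\ref{bound}, with the Lyapunov function $V^r_{m_0,m_1}$ replacing $W^r_{m_0,m_1}$ throughout. First I would differentiate $V^r_{m_0,m_1}$ and substitute the $(m_0,m_1)$-ODE to obtain
$$\tfrac{d}{dx}V^r_{m_0,m_1}(x)=\alpha_{m_0,m_1}(x)\,r'(x)^2-\beta'_{m_0,m_1}(x)\cos^2 r(x),$$
where $\beta'_{m_0,m_1}(x)=\tfrac{m_0+m_1}{4\cosh^2 x}>0$. On $(-\infty,Z_{m_0,m_1}^{\alpha}]$ one has $\alpha_{m_0,m_1}\leq 0$, so both summands are non-positive and hence $\tfrac{d}{dx}V^r_{m_0,m_1}\leq 0$ there.

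To promote this to strict decrease I would analyze the equality case: since $\beta'_{m_0,m_1}>0$, vanishing of the derivative at an interior point $x_0<Z_{m_0,m_1}^{\alpha}$ forces both $\cos r(x_0)=0$ and $r'(x_0)=0$, whence the theorem of Picard--Lindel\"of gives $r\equiv k\pi+\tfrac{\pi}{2}$ globally and consequently $V^r_{m_0,m_1}\equiv 0$. Otherwise the zero set of $\tfrac{d}{dx}V^r_{m_0,m_1}$ in $(-\infty,Z_{m_0,m_1}^{\alpha}]$ is contained in $\{Z_{m_0,m_1}^{\alpha}\}$, and $V^r_{m_0,m_1}$ is strictly decreasing on $(-\infty,Z_{m_0,m_1}^{\alpha}]$. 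Since a monotone function automatically has a limit at an endpoint, $\lim_{x\to-\infty}V^r_{m_0,m_1}(x)$ exists in $\R\cup\{\infty\}$.

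For the final bound I would exploit the identity $\tfrac{1}{2}r'(x)^2=V^r_{m_0,m_1}(x)+\beta_{m_0,m_1}(x)\cos^2 r(x)$. Assuming $\lim_{x\to-\infty}r(x)=k\pi$, one has $\cos^2 r(x)\to 1$ and $\beta_{m_0,m_1}(x)\to -m_0/2$, so the right hand side possesses a (possibly infinite) limit equal to $L-m_0/2$, where $L:=\lim_{x\to-\infty}V^r_{m_0,m_1}(x)$. I would rule out $L>m_0/2$ (and $L=\infty$) by observing that then $|r'(x)|$ would be eventually bounded below by a positive constant, so by continuity $r'$ would have constant sign for $x$ sufficiently negative, forcing $r(x)\to\pm\infty$ and contradicting $r(x)\to k\pi$; hence $L=m_0/2$. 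Monotonicity from the first part then yields $V^r_{m_0,m_1}(x)\leq m_0/2$ for $x\leq Z_{m_0,m_1}^{\alpha}$, and under the standing assumption $m_0\leq m_1$ the identity $\beta_{m_0,m_1}(Z_{m_0,m_1}^{\alpha})=\tfrac{m_0-m_1}{2(m_0+m_1-2)}\leq 0$, together with monotonicity of $\beta_{m_0,m_1}$, gives $\beta_{m_0,m_1}(x)\leq 0$ throughout $(-\infty,Z_{m_0,m_1}^{\alpha}]$. Plugging these bounds into the identity delivers $\tfrac{1}{2}r'(x)^2\leq V^r_{m_0,m_1}(x)\leq m_0/2$, i.e.\ $|r'(x)|\leq\sqrt{m_0}$.

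The main obstacle I anticipate is the identification $L=m_0/2$: a priori $r'$ need not have a limit as $x\to-\infty$, and only the Lyapunov identity ties the asymptotics of $V^r_{m_0,m_1}$ to those of $r'$. The contradiction argument above circumvents this difficulty by passing to $(r')^2$, which does inherit a limit once both $V^r_{m_0,m_1}$ and $r$ do; an alternative route is to return to the variable $t=2\arctan(e^x)$ and use the smooth extension of $r$ at $t=0$ from Lemma~\ref{asy}, which directly gives $r'(x)=\dot r(t)\sin t\cos t\to 0$ as $x\to-\infty$.
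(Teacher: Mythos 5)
Your proof is correct and follows exactly the route the paper intends: the paper omits the proof of this lemma, stating it is analogous to the corresponding results for $W^r_{m_0,m_1}$ (Lemmas \ref{increase} and \ref{bound}), and your argument is precisely that mirror-image analogy, carried out with the correct sign analysis of $\alpha_{m_0,m_1}\leq 0$ and $\beta_{m_0,m_1}\leq 0$ on $(-\infty,Z_{m_0,m_1}^{\alpha}]$ under the standing convention $m_0\leq m_1$. The identification $L=m_0/2$ via the contradiction with $r(x)\to k\pi$ is the same (implicit) step the paper uses to get $\lim_{x\to\infty}W^r_{m_0,m_1}=\tfrac{m_1}{2}$ in Lemma \ref{bound}.
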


In terms of the function $\phi$ defined by $\phi(x)=r(-x)-\frac{\pi}{2}$ the $(m_0,m_1)$-ODE becomes
\begin{align}
\label{odez}
 \phi''(x)-\alpha_{m_1,m_0}(x)\phi'(x)+\beta_{m_1,m_0}(x)\sin2\phi(x)=0,
\end{align}
which is the $(m_1,m_0)$-ODE for $\phi$. The next lemma yields restrictions for the first derivative of a solution $\phi$ of the $(m_1,m_0)$-BVP.

\begin{lemma}
\label{hilflem}
Let $\phi$ be a solution of the $(m_1,m_0)$-ODE.
\renewcommand{\labelenumi}{(\roman{enumi})}
\begin{enumerate}
\item If $\phi'(x_0)\geq q_{m_1,m_0}(x_0)$ for a point $x_0>Z_{m_1,m_0}^{\alpha}$ then $\lim_{x\rightarrow\infty}\phi(x)=\infty$.
\item If $\phi'(x_0)\leq -q_{m_1,m_0}(x_0)$ for a point $x_0>Z_{m_1,m_0}^{\alpha}$ then $\lim_{x\rightarrow\infty}\phi(x)=-\infty$.
\end{enumerate}
\end{lemma}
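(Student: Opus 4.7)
We prove only (i); part (ii) follows by applying (i) to $-\phi$, which also solves the $(m_1,m_0)$-ODE.

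The strategy mirrors that of Lemma~\ref{bounded23}. On $(Z_{m_1,m_0}^{\alpha},\infty)$ we have $\alpha_{m_1,m_0}>0$, so the $(m_1,m_0)$-ODE together with the identity $\alpha_{m_1,m_0}q_{m_1,m_0}=\beta_{m_1,m_0}$ can be rewritten as
\[
\phi''(x)=\alpha_{m_1,m_0}(x)\bigl(\phi'(x)-q_{m_1,m_0}(x)\sin 2\phi(x)\bigr).
\]
Since $\sin 2\phi\leq 1$, the hypothesis $\phi'(x_0)\geq q_{m_1,m_0}(x_0)$ forces $\phi''(x_0)\geq 0$ as soon as $q_{m_1,m_0}(x_0)\geq 0$.

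The heart of the argument is to show that $\phi'(x)\geq q_{m_1,m_0}(x)$ persists for every $x\geq x_0$. Setting $F=\phi'-q_{m_1,m_0}$ and differentiating, one obtains the linear identity
\[
F'(x)-\alpha_{m_1,m_0}(x)F(x)=\beta_{m_1,m_0}(x)(1-\sin 2\phi(x))-q_{m_1,m_0}'(x),
\]
together with the explicit formula $q_{m_1,m_0}'(x)=\tfrac{m_0-m_1}{4\cosh^2 x\,\alpha_{m_1,m_0}(x)^2}$. When $m_0\leq m_1$, the right-hand side is nonnegative on $(Z_{m_1,m_0}^{\alpha},\infty)$ because there $q_{m_1,m_0}'\leq 0$ and $\beta_{m_1,m_0}\geq 0$; variation of parameters applied to this linear inhomogeneous inequality, with $F(x_0)\geq 0$, then yields $F(x)\geq 0$ for all $x\geq x_0$. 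Equivalently, one may argue by contradiction: at a supposed first zero $x_1>x_0$ of $F$ one has $F'(x_1)\leq 0$, while $\phi''(x_1)=\alpha q(1-\sin 2\phi)(x_1)\geq 0$ and $q_{m_1,m_0}'(x_1)\leq 0$ give $F'(x_1)\geq 0$, a contradiction (and strict if either factor is).

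To conclude, note $\alpha_{m_1,m_0}(x)\to m_0-1$ and $\beta_{m_1,m_0}(x)\to m_0/2$ as $x\to\infty$, so $q_{m_1,m_0}(x)\to m_0/(2(m_0-1))>0$. Hence there is $x_*\geq x_0$ with $q_{m_1,m_0}(x)\geq m_0/(4(m_0-1))$ for $x\geq x_*$, and integrating $\phi'\geq q_{m_1,m_0}$ from $x_*$ gives $\phi(x)\to\infty$.

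The main obstacle is the sign verification of the source term $\beta_{m_1,m_0}(1-\sin 2\phi)-q_{m_1,m_0}'$ in the persistence step. The case $m_0\leq m_1$ is immediate from the formulae above, but the case $m_0>m_1$ (where $q_{m_1,m_0}'>0$) is more delicate: at points where $\sin 2\phi$ is close to $1$ the source can be negative, so ruling out a first crossing of $F$ requires a refined analysis at a hypothetical zero $x_1$—using the higher-order behavior of $\phi$ at $x_1$ together with the monotonicity of the Lyapunov function $W_{m_1,m_0}^{\phi}$ obtained by swapping the roles of $m_0$ and $m_1$ in Lemma~\ref{increase}.
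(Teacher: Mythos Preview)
Your argument for the case $m_0\leq m_1$ is correct: the differential inequality $F'-\alpha_{m_1,m_0}F\geq 0$ with $F(x_0)\geq 0$ indeed forces $F\geq 0$ on $[x_0,\infty)$, and then integrating $\phi'\geq q_{m_1,m_0}\to \tfrac{m_0}{2(m_0-1)}>0$ gives the claim. However, the ``main obstacle'' you raise at the end is illusory: the paper works throughout under the convention $m_0\leq m_1$, and the proof of this lemma is only ever invoked in that regime (see Theorem~\ref{streifenneg}). The paper's own proof likewise relies on $q_{m_1,m_0}$ being strictly decreasing, which is exactly your condition $q_{m_1,m_0}'\leq 0$, i.e.\ $m_0\leq m_1$. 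So the final paragraph about the case $m_0>m_1$ should simply be dropped; the hand-waving there is neither needed nor convincing.

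On the method: the paper's route is more elementary than your $F$/variation-of-parameters setup. It tracks $\phi''$ rather than $F=\phi'-q_{m_1,m_0}$. From the ODE, $\phi''(x)>0$ is equivalent to $\phi'(x)>q_{m_1,m_0}(x)\sin 2\phi(x)$ on $(Z_{m_1,m_0}^{\alpha},\infty)$. Since $q_{m_1,m_0}$ is strictly decreasing there and positive, once $\phi'(x_0)\geq q_{m_1,m_0}(x_0)$ one bootstraps: as long as $\phi''>0$ one has $\phi'(x)>\phi'(x_0)\geq q_{m_1,m_0}(x_0)>q_{m_1,m_0}(x)\geq q_{m_1,m_0}(x)\sin 2\phi(x)$, so $\phi''(x)>0$; a supposed first zero of $\phi''$ gives an immediate contradiction. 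Then $\phi'(x)\geq \phi'(x_0)\geq q_{m_1,m_0}(x_0)>0$ for all $x\geq x_0$ and integration finishes. Your approach packages the same monotonicity of $q_{m_1,m_0}$ into the sign of the source term $-q_{m_1,m_0}'$; it is correct but uses more machinery than necessary, and your alternative contradiction version (on $F$) is essentially the paper's argument transposed to the function $\phi'-q_{m_1,m_0}$ instead of $\phi''$.
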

\begin{proof}
We assume that there exists a point $x_0>Z_{m_1,m_0}^{\alpha}$ such that $\phi'(x_0)\geq q_{m_1,m_0}(x_0)$.
By ODE (\ref{odez}) the inequality $\phi''(x)> 0$
with $x>Z_{m_1,m_0}^{\alpha}$ is equivalent to
\begin{align*}
\phi'(x)>q_{m_1,m_0}(x)\sin2\phi(x).
\end{align*}
Since $x_0>Z_{m_1,m_0}^{\alpha}$ we have $q_{m_1,m_0}(x_0)>0$.
Consequently, $\phi'(x_0)>q_{m_1,m_0}(x_0)$ implies $\phi''(x_0)>0$.
Next we assume that there exists a point $x_1>x_0$ such that $\phi''(x)>0$ for all $x\in\left[x_0,x_1\right)$
and $\phi''(x_1)=0$.
Since the function $q_{m_1,m_0}$ is strictly decreasing on the interval $\left(Z_{m_1,m_0}^{\alpha},\infty\right)$, we obtain
$\phi'(x)> \phi'(x_0)> q_{m_1,m_0}(x_0)> q_{m_1,m_0}(x)$ for $x\in\left(x_0,x_1\right)$.
Due to the continuity of the functions $\phi'$ and $q$ we get $\phi'(x_1)> q_{m_1,m_0}(x_1)$.
Since $q_{m_1,m_0}(x_1)>0$ we have $\phi''(x_1)>0$, contradicting our assumption.
Consequently, we have $\phi''(x)>0$ for all $x\geq x_0$.
This in turn yields
$\phi'(x)\geq \phi'(x_0)\geq q_{m_1,m_0}(x_0)>0$ for all $x\geq x_0$,
which establishes the first claim.
The second statement is obtained by the first by considering $-\phi$ instead of $\phi$.
\end{proof}

\begin{theorem}
\label{streifenneg}
For $m_0\geq 2$ there exists $d_{m_0,m_1}^{-}\in\R$ such that for each solution $r$ of the $(m_0,m_1)$-ODE with $\lim_{x\rightarrow -\infty}r(x)=0$
either $0\leq r(x)\leq\pi$ or $-\pi\leq r(x)\leq 0$, for all $x\leq d_{m_0,m_1}^{-}$.
\end{theorem}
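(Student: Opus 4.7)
The plan is to reduce the large-negative-$x$ behavior of $r$ to the large-positive-$x$ behavior of a related solution and then invoke Theorem~\ref{gwert} directly. Concretely, following the substitution already recorded in the excerpt, I set $\phi(x)=r(-x)-\tfrac{\pi}{2}$, so that $\phi$ solves the $(m_1,m_0)$-ODE \eqref{odez}. If $r\equiv 0$ the claim is immediate, so I may assume $r$ and hence $\phi$ is non-constant.

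Next I translate the hypothesis and use Theorem~\ref{gwert} on $\phi$. Since $\lim_{x\to -\infty}r(x)=0$, we have $\lim_{x\to\infty}\phi(x)=-\tfrac{\pi}{2}=(2(-1)+1)\tfrac{\pi}{2}$, so $\phi$ satisfies the right-hand boundary condition of the $(m_1,m_0)$-BVP. Together with $\lim_{x\to -\infty}\phi(x)$ being finite (automatic for a non-constant solution that does not blow up), $\phi$ is a solution of the $(m_1,m_0)$-BVP. Theorem~\ref{gwert} is applicable because its standing hypothesis (the second multiplicity $\geq 2$) becomes exactly $m_0\geq 2$ after swapping the roles of $m_0$ and $m_1$. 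Applying it produces $d_{m_1,m_0}^{+}\in\R$ and an integer $\ell_0$ with
\[
(2\ell_0+1)\tfrac{\pi}{2}\leq\phi(x)\leq(2\ell_0+3)\tfrac{\pi}{2}\qquad\text{for all }x\geq d_{m_1,m_0}^{+}.
\]

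Finally, I translate back. Because $\lim_{x\to\infty}\phi(x)=-\tfrac{\pi}{2}$ must lie in the stripe, only $\ell_0=-1$ or $\ell_0=-2$ are possible. Setting $d_{m_0,m_1}^{-}:=-d_{m_1,m_0}^{+}$ and substituting $x=-y$ in the above stripe bound gives, for every $y\leq d_{m_0,m_1}^{-}$,
\[
(\ell_0+1)\pi\leq r(y)\leq(\ell_0+2)\pi,
\]
i.e.\ $0\leq r(y)\leq\pi$ when $\ell_0=-1$ and $-\pi\leq r(y)\leq 0$ when $\ell_0=-2$, which is exactly the asserted dichotomy.

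There is no real obstacle beyond correctly matching the hypotheses of Theorem~\ref{gwert} after the symmetry $x\mapsto -x$: the only subtlety is checking that $\phi$ is genuinely covered by that theorem (non-constant and with finite limit at $+\infty$), which is immediate from the assumption $\lim_{x\to-\infty}r(x)=0$ and the reduction to the non-constant case. All quantitative work has already been done in the positive-$x$ half of Section~\ref{sec2}.
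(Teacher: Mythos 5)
Your reduction via $\phi(x)=r(-x)-\tfrac{\pi}{2}$ is exactly the paper's starting point, but the step where you invoke Theorem~\ref{gwert} does not go through. The stripe conclusion of Theorem~\ref{gwert} is only available for solutions of the $(m_1,m_0)$-\emph{BVP}, and being a solution of that BVP requires $\lim_{x\to-\infty}\phi(x)=0$; the theorem's standing hypothesis is likewise $\lim_{x\to-\infty}\phi(x)=0$, not mere finiteness of the limit. For your $\phi$ this condition reads $\lim_{x\to+\infty}r(x)=\tfrac{\pi}{2}$, which is precisely the kind of information Theorem~\ref{streifenneg} does \emph{not} assume: the only hypothesis on $r$ is its behavior at $-\infty$, which after reflection controls $\phi$ at $+\infty$, i.e.\ the wrong end for Theorem~\ref{gwert}. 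Your parenthetical ``automatic for a non-constant solution that does not blow up'' is not a substitute -- finiteness of $\lim_{x\to-\infty}\phi(x)$ is neither automatic nor what the BVP demands.

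The natural repair -- citing Lemma~\ref{streifen} for the $(m_1,m_0)$-ODE instead, and discarding cases (ii) and (iii) because $\lim_{x\to\infty}\phi(x)=-\tfrac{\pi}{2}$ is finite -- is closer to what the paper does, but it is also not a free citation, and this is where your closing claim that ``all quantitative work has already been done in the positive-$x$ half'' breaks down. The positive-$x$ machinery (Lemma~\ref{bounded23}, hence Lemma~\ref{streifen} and Theorem~\ref{gwert}) is established under the ordering in which the first multiplicity does not exceed the second: there the quotient $q_{m_0,m_1}=\beta_{m_0,m_1}/\alpha_{m_0,m_1}$ is increasing on $(Z^{\alpha}_{m_0,m_1},\infty)$ with image $(-\infty,B)$. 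After swapping to $(m_1,m_0)$ the quotient $q_{m_1,m_0}$ is strictly \emph{decreasing} with image $(B,\infty)$, so the constant $c$ and the comparison argument of Lemma~\ref{bounded23} do not transfer. This is exactly why the paper proves Lemma~\ref{hilflem} (the decreasing-$q$ analogue of Lemma~\ref{bounded23}) and reruns the stripe argument for $\phi$ with the new constant $C_{m_0,m_1}=\max\{x:2\beta_{m_1,m_0}(x)=q_{m_1,m_0}(x)^2\}$, using monotonicity of $W^{\phi}_{m_1,m_0}$ to force $\phi'(x_1)>q_{m_1,m_0}(x_1)$ at a hypothetical crossing of $(2\ell_0+3)\tfrac{\pi}{2}$, before reflecting back and using $\lim_{x\to-\infty}r(x)=0$ to pin down $\ell_0\in\{-1,0\}$. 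Your final translation step (reading off the two stripes containing $-\tfrac{\pi}{2}$) is fine, but the quantitative core of the negative-$x$ direction still has to be carried out.
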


\begin{proof}
First we prove that for each solution $\phi$ of the ODE (\ref{odez}) there exist $\ell_0\in\Z$ and $C_{m_0,m_1}\in\R$ such that $(2\ell_0+1)\frac{\pi}{2}\leq\phi(x)\leq (2\ell_0+3)\frac{\pi}{2}$ for all $x\geq C_{m_0,m_1}$, or $\lim_{x\rightarrow\infty}\phi(x)=\pm\infty$.
Since the proof is similar to that of Lemma\,\ref{streifen} some details are omitted.
Again it is sufficient to deal with the case $\ell_0=-1$.  

\smallskip

Set $C_{m_0,m_1}=\mbox{max}\left\{x\in\R\,\lvert\,2\beta_{m_1,m_0}(x)=q_{m_1,m_0}(x)^2\right\}$
and assume that there exists a $x_0\geq C_{m_0,m_1}$ such that $\phi(x_0)=\frac{\pi}{2}$.
Without loss of generality we can assume $\phi'(x_0)>0$. Then there exists a point $x_1>x_0$ with $\phi(x_1)=\pi$ and $\phi'(x_1)\geq 0$. 

\smallskip

Since $W_{m_1,m_0}^{\phi}$ is increasing for $x\geq Z_{m_1,m_0}^{\alpha}$, we obtain
\begin{align*}
W_{m_1,m_0}^{\phi}(x_1)-W_{m_1,m_0}^{\phi}(x_0)=\tfrac{1}{2}(\phi'(x_1)^2-\phi'(x_0)^2)-\beta_{m_1,m_0}(x_0)\geq 0
\end{align*}
and thus we get $\phi'(x_1)\geq q_{m_1,m_0}(x_0),$
where we used $x_0\geq C_{m_0,m_1}$ and $\phi'(x_1)\geq 0$.
Since $q_{m_1,m_0}$ is strictly decreasing on the interval $(Z_{m_1,m_0}^{\alpha},\infty)$ and $x_1>x_0\geq C_{m_0,m_1}\geq Z_{m_1,m_0}^{\alpha}$, we obtain
$\phi'(x_1)> q_{m_1,m_0}(x_1).$
Hence Lemma\,\ref{hilflem} implies $\lim_{x\rightarrow\infty}\phi(x)=\infty$.

\smallskip

Plugging in $\phi(x)=r(-x)-\tfrac{\pi}{2}$ implies that there either exists an integer $\ell_0\in\Z$ such that $\ell_0\pi\leq r(x)\leq (\ell_0+1)\pi$
for all $x\leq -C_{m_0,m_1}$ or $\lim_{x\rightarrow -\infty}r(x)=\pm\infty$. Since $r$ satisfies $\lim_{x\rightarrow -\infty}r(x)=0$ we have $\ell_0\in\left\{-1,0\right\}$.
Setting $d_{m_0,m_1}^{-}=-C_{m_0,m_1}$ establishes the claim. 
\end{proof}

\section{The cases $2\leq m_0\leq 5$}
\label{infi}
\label{sec3}
In the present section we prove Theorem\,A, i.e., we show that for $2\leq m_0\leq 5$ there exit infinitely many solutions of the $(m_0,m_1)$-BVP
and thus infinitely many harmonic maps between the spheres $\Sph^{m_0+m_1+1}$.
For the construction of the solutions we use a shooting method at the singular point $t=0$.

\smallskip

For $v\in\R$ let $r_v$ be as in Lemma\,\ref{asy} and set $\varphi_v:=r_v-\tfrac{\pi}{2}$. We introduce the \textit{nodal number $\frak{N}(r_v)$ of $r_v$} as the number 
of intersection points of $r_v$ with $\frac{\pi}{2}$. In other words, $\frak{N}(r_v)$ denotes the number of zeros of $\varphi_v$.
The function $r_1(x)=\arctan(\exp x)$ solves the $(m_0,m_1)$-BVP with $\frak{N}(r_1)=0$.
The next lemma ensures that for $2\leq m_0\leq 5$ we cannot increase $v$ arbitrarily without increasing the nodal number of $r_v$.
Since the proof of Lemma\,4.2 in \cite{ga} does not depend on the sign of $\lambda_2$, we obtain the following result.

\begin{lemma}[see \cite{ga}]
\label{nullstellen}
Let $2\leq m_0\leq 5$. For each $k\in\N$ there exists $c(k)>0$ such that $\frak{N}(r_v)\geq k$ whenever $v>c(k)$.
\end{lemma}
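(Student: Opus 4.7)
The plan is to adapt verbatim the strategy of Lemma 4.2 in \cite{ga}. The essential observation is that the claim concerns only the behavior of $r_v$ near the degenerate endpoint $t=0$, which in Gastel's Hopf-construction setup is governed solely by the data $(p_1,\lambda_1)$ and is insensitive to $(p_2,\lambda_2)$. In the correspondence $p_1=\lambda_1=m_0$, $p_2=m_1$, $\lambda_2=-m_1$ indicated in Section \ref{sec1}, the reduction near $t=0$ uses only the positive quantity $m_0$, so the fact that $\lambda_2$ is negative in our setting is irrelevant.

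Concretely, I would pass to the variable $x=\log\tan t$ and linearize the $(m_0,m_1)$-ODE around $r=\tfrac{\pi}{2}$ in the limit $x\to-\infty$. Since $\alpha_{m_0,m_1}(x)\to 1-m_0$ and $\beta_{m_0,m_1}(x)\to -\tfrac{m_0}{2}$ at rate $O(e^{2x})$, the limiting linear equation for $\varphi=r-\tfrac{\pi}{2}$ is
\begin{equation*}
\varphi''(x)+(m_0-1)\varphi'(x)+m_0\,\varphi(x)=0,
\end{equation*}
whose characteristic polynomial has discriminant $m_0^2-6m_0+1$, strictly negative precisely for $m_0\in\{1,2,3,4,5\}$. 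Its fundamental solutions therefore oscillate with the fixed $x$-frequency $\omega_{m_0}=\tfrac{1}{2}\sqrt{4m_0-(m_0-1)^2}$. Using Lemma \ref{asy} together with the initial asymptotic $r_v(t)\approx vt$, I would show that the first time $t_v$ at which $r_v$ reaches $\tfrac{\pi}{2}$ satisfies $t_v=O(1/v)$, so that the corresponding value $x_v=\log\tan t_v$ tends to $-\infty$ as $v\to\infty$. A Sturm-type comparison on the interval $[x_v,x_0]$ with fixed $x_0$, exploiting the exponential decay of the coefficient errors and the cubic bound $\sin 2\varphi-2\varphi=O(\varphi^3)$, then yields at least $\omega_{m_0}(x_0-x_v)/\pi-O(1)$ zeros of $\varphi_v$ in that interval, which exceeds any prescribed $k$ once $v$ is large enough.

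The main obstacle is to keep the amplitude of $\varphi_v$ in the correct regime throughout this long comparison interval: small enough that the cubic error $\sin 2\varphi-2\varphi$ remains harmless, yet large enough that each sign change of the linear companion is inherited by $\varphi_v$ itself. Balancing these two requirements is the technical heart of Gastel's Lemma 4.2 and is handled there via a Lyapunov-type monotonicity argument in the spirit of the $V^r_{m_0,m_1}$ used in Lemma \ref{bound2}, combined with the continuous dependence on $v$ from Lemma \ref{asy}. Since every estimate entering this balance depends only on $m_0$ and on the limiting values of the coefficients at $x=-\infty$, neither the sign of $\beta_{m_0,m_1}$ nor (equivalently) the sign of $\lambda_2$ in the parametrization of \cite{ga} plays any role, and Gastel's argument transfers without modification.
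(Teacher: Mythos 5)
Your proposal follows exactly the route the paper takes: the paper proves nothing here beyond observing that Gastel's Lemma 4.2 applies verbatim because its argument, which concerns only the behaviour near the degenerate endpoint $t=0$, is insensitive to the sign of $\lambda_2$ — precisely the point you make, and your computation of the limiting linearization $\varphi''+(m_0-1)\varphi'+m_0\varphi=0$ with discriminant $m_0^2-6m_0+1<0$ for $2\leq m_0\leq 5$ correctly identifies the oscillation mechanism behind the cited lemma. The sketch is consistent with the paper (and with the rescaling limit $\varphi_v(x-\log v)\to\psi(x)$ that the paper itself invokes later in Lemma 5.1), so no further comment is needed.
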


We now prove Theorem\,A. The idea can be explained with the help of the following pictures.
\begin{figure}[ht]
	\centering
\includegraphics[width=14cm, height=7cm]{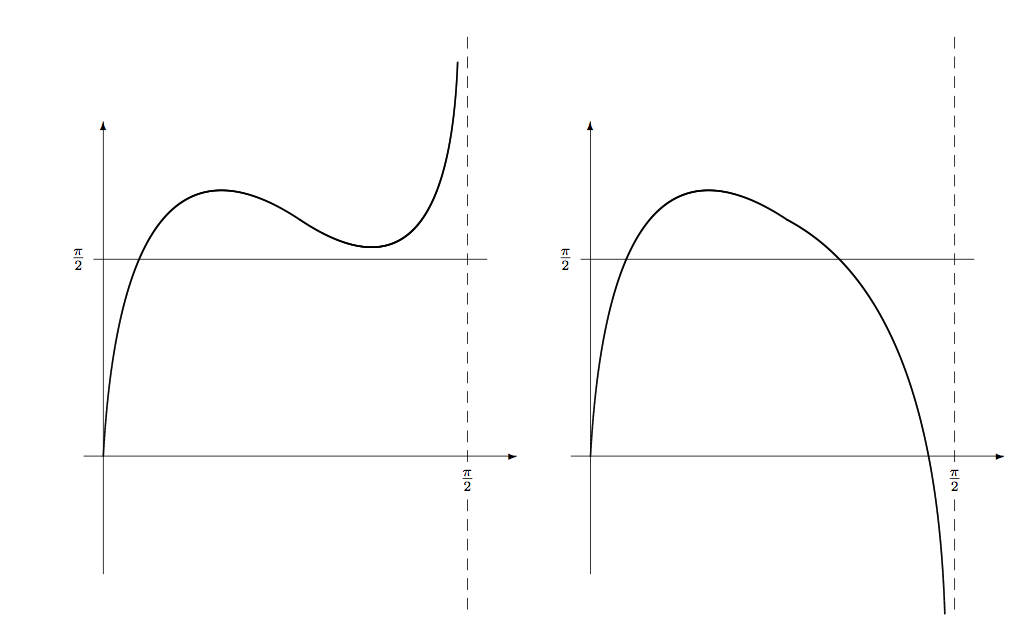}
	\label{fig2}
\end{figure} 
In the left picture a solution of the initial value problem with nodal number $1$ is sketched.
By Lemma\,\ref{nullstellen} the nodal number increases if we increases the initial velocity large enough.
We will prove that the nodal number can increases always by $1$. So there exists an initial velocity for which 
the nodal number is given by $2$; this situation is sketched in the right picture.
Using an intermediate value theorem we prove that this implies the existence of a solution of the $(m_0,m_1)$-BVP with nodal number $1$.
Afterwards we proceed inductively to prove the claim.

\begin{theorem}
\label{infam}
Let $2\leq m_0\leq 5$. For each $k\in\N_0$ there exists a solution $r_v$ of the $(m_0,m_1)$-BVP with $\frak{N}(r_v)=k$.
\end{theorem}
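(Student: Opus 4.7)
The plan is to argue by induction on $k$. For the base case $k=0$, the explicit solution $r_1(x) = \arctan(\exp x)$ corresponding to $v=1$ is a strictly increasing map from $0$ to $\pi/2$, so it solves the $(m_0,m_1)$-BVP and has nodal number zero.

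For the inductive step, suppose $r_{v_k}$ is a BVP solution with $\mathfrak{N}(r_{v_k}) = k$. By Lemma \ref{nullstellen}, choose $V > v_k$ with $\mathfrak{N}(r_V) \geq k+2$; I will produce $v_{k+1} \in (v_k, V]$ giving a BVP solution with nodal number $k+1$. The argument rests on three structural facts established in Section \ref{sec2}:
(i) $r_v$ depends continuously on $v$ and $r_v$ has no tangential intersection with the line $\pi/2$ (Lemma \ref{asy}), so transverse crossings persist under small perturbations of $v$;
(ii) any non-constant $r_v$ either solves the BVP or tends to $\pm\infty$ (Theorem \ref{gwert}); and
(iii) the two escape conditions are open in $v$ (Lemma \ref{bounded23}), so the sets $V_\pm := \{v : r_v \to \pm\infty\}$ are open and their complement $\Sigma$, the set of BVP initial velocities, is closed. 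Combining (i) and (iii), the nodal number $\mathfrak{N}(r_v)$ is locally constant on each connected component of $V_+ \cup V_-$: a new intersection with $\pi/2$ can only be produced when $v$ crosses $\Sigma$.

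I would then trace $\mathfrak{N}(r_v)$ as $v$ moves rightward from $v_k$. Since $r_{v_k}$ is eventually monotone (Theorem \ref{gwert}) and approaches its limit $(2\ell_k+1)\tfrac{\pi}{2}$ from one fixed side, a small increase of $v$ past $v_k$ throws $r_v$ into $V_+$ or $V_-$ after a single overshoot of this limit, which adds exactly one intersection with $\pi/2$; hence $\mathfrak{N}(r_v) = k+1$ on the component of $V_+ \cup V_-$ lying immediately to the right of $v_k$. Since $\mathfrak{N}(r_V) \geq k+2$, the nodal number cannot remain at $k+1$ throughout $[v_k, V]$, so this component has a right endpoint $v_{k+1} \in (v_k, V]$ lying in $\Sigma$. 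A lower-semicontinuity argument for $\mathfrak{N}$ — crossings persist under perturbation while new ones appear only at $\Sigma$ — then yields $\mathfrak{N}(r_{v_{k+1}}) = k+1$, closing the induction.

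The main obstacle is the nodal-number accounting at a BVP transition: one must verify that the overshoot of $r_v$ just beyond $v_k$ produces \emph{exactly one} new intersection with $\pi/2$, rather than several, so that the induction reaches every $k$ without skipping. This is controlled quantitatively through the Lyapunov function $W^r_{m_0,m_1}$ of Section \ref{sec2} — in particular its monotonicity on $[Z^\alpha_{m_0,m_1},\infty)$ and the resulting bound on $(r'_v)^2$ — together with the eventual-monotonicity conclusion of Theorem \ref{gwert}, which prevents further oscillations once $r_v$ has been captured into $V_+$ or $V_-$.
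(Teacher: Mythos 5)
Your overall strategy---shooting from the singular point, induction on the nodal number, continuity in $v$, and the impossibility of tangential intersections with $\tfrac{\pi}{2}$ (Lemma \ref{asy})---is in the same spirit as the paper's, but the inductive step has a genuine gap at exactly the point you flag as ``the main obstacle,'' and the mechanism you invoke does not close it. The paper does not induct from an arbitrary BVP solution with nodal number $k$; it sets $v_k=\sup\{v:\mathfrak{N}(r_v)=k\}$ (finite by Lemma \ref{nullstellen}). That supremum is what \emph{forces} the nodal number to exceed $k$ immediately to the right of $v_k$, and Lemma \ref{streifen} (a transverse upward crossing of $\tfrac{\pi}{2}$ beyond $d^{+}_{m_0,m_1}$ sends $r$ monotonically to $+\infty$ without return, so at most one crossing can live beyond $d^{+}_{m_0,m_1}$) then pins the new count at exactly $k+1$. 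Starting instead from an arbitrary BVP velocity $v_k$, nothing forces $\mathfrak{N}$ to become $k+1$ just to its right: first, $\Sigma$ is only known to be closed, so it may contain a whole interval around $v_k$ and there need be no component of $V_+\cup V_-$ ``immediately to the right''; second, even where $r_v$ does escape, the escape need not create a crossing of the \emph{level} $\tfrac{\pi}{2}$. If $r_{v_k}\to\tfrac{\pi}{2}$ from below and the perturbed solutions fall to $-\infty$, or if $r_{v_k}\to\tfrac{3\pi}{2}$ (which Theorem \ref{brodeg} allows) and they rise to $+\infty$, no new intersection with $\tfrac{\pi}{2}$ appears, and the adjacent component of $V_+\cup V_-$ still carries nodal number $k$. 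Your phrase ``a single overshoot of this limit $\ldots$ adds exactly one intersection with $\tfrac{\pi}{2}$'' conflates the limit $(2\ell_k+1)\tfrac{\pi}{2}$ with the level $\tfrac{\pi}{2}$ and ignores the direction of escape; eventual monotonicity from Theorem \ref{gwert} controls oscillation but not this sign bookkeeping.

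A second, smaller issue: at the right endpoint $v_{k+1}$ of your component, persistence of transverse zeros only gives $\mathfrak{N}(r_{v_{k+1}})\leq k+1$; to exclude a zero being \emph{lost} to $x=+\infty$ in the limit $v\nearrow v_{k+1}$ one needs the sign-comparison argument the paper uses (the tail sign of $\varphi_v$ is determined by the parity of the zero count, and losing exactly one zero flips it, while losing two is impossible because at most one zero lies beyond $d^{+}_{m_0,m_1}$). The repair is to replace ``the component immediately to the right of $v_k$'' by the supremum $\sup\{v:\mathfrak{N}(r_v)=k\}$ and then carry out three steps: show $\mathfrak{N}=k$ at the supremum by the sign argument, show that for $v$ slightly larger a single new zero enters from $x=+\infty$ (definition of the supremum plus Lemma \ref{streifen}), and finally rule out $r_{v_k}\to\pm\infty$, so that the supremum itself is a BVP solution.
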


\begin{proof}
The function $r_1(x)=\arctan(\exp(x))$ solves the $(m_0,m_1)$-BVP with $\frak{N}(r_1)=0$.
Consequently, $v_0=\mbox{sup}\left\{v\,\lvert\,\frak{N}(r_v)=0\right\}$ is well-defined and
Lemma\,\ref{nullstellen} implies $v_0<\infty$. 

\smallskip

We prove by contradiction that $\frak{N}(r_{v_0})=0$. 
Assume that there exists $x_0\in\R$ with $r_{v_0}(x_0)=\frac{\pi}{2}$.
By Lemma\,\ref{asy} we have $r_{v_0}'(x_0)\neq 0$. Consequently, $\varphi_{v_0}$ has opposite signs in the intervals $(-\infty,x_0)$ and $(x_0,\infty)$, respectively.
Since $r_v$ depends continuously on $v$ there exists a sequence $(c_i)_{i\in\N}$ with
$c_i<v_0$, $\lim_{i\rightarrow\infty}c_i=v_0$ and $\frak{N}(r_{c_i})=0$. Thus each of the functions $\varphi_{c_i}$ has a different sign than $\varphi_{v_0}$ on the interval $(x_0,\infty)$. This contradicts the fact that $\varphi_v$ depends continuously on $v$.
Consequently, $\frak{N}(r_{v_0})=0$.

\smallskip

By Lemma\,\ref{asy} there cannot exist a point $x_0\in\R$ such that $r_v(x_0)=\frac{\pi}{2}$ and $r'_v(x_0)=0$. 
Since $r_v$ depends continuously on $v$, an additional node can only arise at infinity, i.e., there exists $\epsilon>0$ such that
$\varphi_v$ has at least one zero $z_1(v)$ for each $v\in \left(v_0,v_0+\epsilon\right)$ and $\lim_{v\searrow v_0}z_{1}(v)=\infty$.  
Lemma\,\ref{streifen} implies that we can choose $\epsilon>0$ such that 
$\varphi_v$ has exactly one zero $z_1(v)$ for each $v\in \left(v_0,v_0+\epsilon\right)$.

\smallskip


There exists $\widetilde{v}$ such that $z_1(\widetilde{v})<d_{m_0,m_1}^{+}$ for all $v>\widetilde{v}$: if $z_1(v)>d_{m_0,m_1}^{+}$ for all $v>v_0$ then Lemma\,\ref{streifen} implies $\lim_{x\rightarrow\infty}\varphi_v(x)=\infty$ for all $v>v_0$.
However, Lemma\,\ref{nullstellen} ensures that for $2\leq m_0\leq 5$ we cannot increase $v$ arbitrarily without increasing the number of zeros of $\varphi_v$.
Since additional nodes have to arise at infinity such an $\widetilde{v}$ exists. 
By the preceding considerations $v_1=\mbox{sup}\left\{v\,\lvert\,\frak{N}(r_{v})=1\right\}$ is well-defined and $v_1>v_0$. Furthermore, Lemma\,\ref{nullstellen} implies $v_1<\infty$. 

\smallskip

We can now proceed inductively, i.e., $v_k=\mbox{sup}\left\{v\,\lvert\,\frak{N}(r_{v})=k\right\}$ is well-defined, satisfies $v_k>v_{k-1}$ and is finite for each $k\in\N$. 
Analogously to the considerations for $v_1$ we prove that $\varphi_{v_k}$ has exactly $k$ zeros and that there exists $\epsilon_k>0$ such that each $\varphi_v$, $v\in\left(v_{k},v_{k}+\epsilon_{k}\right)$,
has exactly $k+1$ zeros.

\smallskip

Finally, we prove  that there exists $\ell_0\in\Z$ such that $\lim_{x\rightarrow \infty}r_{v_i}(x)=\ell_0\pi+\frac{\pi}{2}$ and thus each $r_{v_i}$, $i\in\N$, is a solution of the $(m_0,m_1)$-BVP: if no such $\ell_0\in\Z$ exists, Lemma\,\ref{infinity} implies $\lim_{x\rightarrow \infty}r_{v_i}(x)=\pm\infty$. We may assume without loss of generality $\lim_{x\rightarrow \infty}r_{v_i}(x)=-\infty$.
Recall $\frak{N}(r_{v_i})=i$ and $\frak{N}(r_{v})=i+1$ for $v\in\left(v_i,v_i+\epsilon_i\right)$.
Consequently, for each ${v}\in\left(v_i,v_i+\epsilon_i\right)$ there exists $x_{{v}}\in\R$ such that $\varphi_v(x_v)=0$ and $\varphi_{v}(x)> 0$ for all $x>x_{v}$.
Since $\varphi_{v}$ depends continuously on $v$ we get $\lim_{v\rightarrow v_i}x_v=\infty$. 
Hence there exists $\hat{\epsilon_i}\leq \epsilon_i$ such that $x_v>d_{m_0,m_1}^{+}$ for all $v\in\left(v_{i},v_{i}+\hat{\epsilon_{i}}\right)$.
Lemma\,\ref{streifen} thus implies $\lim_{x\rightarrow \infty}\varphi_{v}(x)=\infty$
for $v\in\left(v_{i},v_{i}+\hat{\epsilon_{i}}\right)$.

\smallskip

On the other hand, the fact that $\varphi_{v}$ depends continuously on $v$ implies that
for each $v\in\left(v_{i},v_{i}+\hat{\epsilon_{i}}\right)$ there exist $k_0\in\Z$ and $x_{k_0}>d_{m_0,m_1}^{+}$ such that $\varphi_v(x_{k_0})=(2k_0+1)\tfrac{\pi}{2}$ and
$\varphi_v'(x_{k_0})<0$. Lemma\,\ref{streifen} thus implies $\lim_{x\rightarrow -\infty}\varphi_{v}(x)=-\infty$, which contradicts the results of the preceding paragraph.
Consequently, there exists $\ell_0\in\Z$ such that $\lim_{x\rightarrow \infty}r_{v_i}(x)=\ell_0\pi+\tfrac{\pi}{2}$ and thus each $r_{v_i}$, $i\in\N$, is a solution of the $(m_0,m_1)$-BVP.
\end{proof}

\begin{remark}
\begin{enumerate}
\renewcommand{\labelenumi}{(\alph{enumi})}
\item If $r$ is a solution of the $(m_0,m_1)$-BVP, so is $-r$. 
Consequently, for each solution of the $(m_0,m_1)$-BVP with $v>0$ there exists a second solution which is obtained from the original solution by reflection on the $x$-axis.
\item
The present approach allows to treat the BVP considered by Bizon and Chmaj in \cite{BC} (see Subsection\,\ref{bvpbc}) and the $(m_0,m_1)$-BVP in a unified way:
the ODE for the Hopf construction reduces to ODE investigated by Bizon and Chmaj  if we choose
$p=q=\mu=\lambda=m$ and $\alpha=r$. 
Following \cite{BC} we introduce the function $W:\R\rightarrow\R, x\mapsto\frac{1}{2}h'(x)^2+\frac{m}{2}\sin^2h(x)$, where $h:\R\rightarrow\R$
is given by $h=r-\tfrac{\pi}{2}$. 
Similar as in the present section we may now prove the existence of an infinite family of harmonic self-maps of $\Sph^{m+1}$ for $2\leq m\leq 5$.
\end{enumerate}
\end{remark}

\section{The cases $m_0\geq 6$}
\label{sec6}
In this section we prove Theorem\,C, i.e., we show that for $m_0\geq 6$, the nodal number of the solutions $r_v$, $v\in\R$, of the $(m_0,m_1)$-ODE is bounded.
This explains why for $m_0\geq 6$ we cannot proceed analogous to the cases $2\leq m_0\leq 5$ in order to construct infinitely many solutions of the $(m_0,m_1)$-BVP. 
Note that this is not a statement about nonexistence, since there might be other ways to construct (infinitely many) solutions of the $(m_0,m_1)$-BVP for $m_0\geq 6$.
The following question remains open.

\begin{question*}
If $m_0\geq 6$, do there exist more solutions of the $(m_0,m_1)$-BVP than $r(t)=\pm t$?
If the answer is affirmative, how do you prove there existence?
\end{question*}

\subsection*{Strategy for proving boundedness of the nodal number}

Recall that the function $\varphi_v(x)=r_v(-x)-\tfrac{\pi}{2}$ satisfies the $(m_1,m_0)$-ODE.
We introduce the continuous function $\theta_v:\I\rightarrow\R, x\mapsto\arctan\tfrac{\varphi_v'(x)}{\varphi_v(x)}$,
where $\I=[-d_{m_0,m_1}^{+},\infty)$.
By Lemma\,\ref{asy} the limit $\theta_v(\infty)=\lim_{x\rightarrow\infty}\theta_v(x)$ exists and is finite and thus $\Omega_v=-\tfrac{1}{\pi}(\theta_v(\infty)-\theta_v(-d_{m_0,m_1}^{+}))$ is well-defined. $\Omega_v$ will henceforth be referred to as winding number of  $\varphi_v$.
Lemma\,\ref{streifen} implies that the difference between $\frak{N}(r_v)$ and $\lfloor\Omega_v\rfloor$ is at most one. Hence to prove that $\frak{N}(r_v)$, $v\in\R$, is bounded from above by an integer which depends on $m_0$ and $m_1$ only, it is sufficient to show the same for $\lfloor\Omega_v\rfloor$ instead.
In order to prove this, we consider the linearized $(m_1,m_0)$-ODE. Similarly as above we associate 
a winding number $\Omega_L$ to this linearized differential equation and prove that $\Omega_L$ is larger than $\lfloor\Omega_v\rfloor$.
Finally, we show that $\Omega_L$  is bounded from above by a constant which only depends on $m_0$ and $m_1$ and thus establish the claim.

\subsection*{Proof that the nodal number is bounded}
The following two lemma are the main ingredients for the proof of Theorem\,C. 
As indicated above, we start by considering the linearized $(m_1,m_0)$-ODE and prove that its winding number is an upper bound for the winding number of the $(m_1,m_0)$-ODE.
Below let $\varphi_L$ be a solution of the linear differential equation
\begin{align}
\label{varphil}
\varphi_L''(x)-\alpha_{m_1,m_0}(x)\varphi_L'(x)+m_0\varphi_L(x)=0.
\end{align}
Furthermore, we introduce the continuous function $\theta_L:\I\rightarrow\R\hspace{0.2cm}\mbox{by}\hspace{0.2cm}\theta_L(x)=\arctan\tfrac{\varphi_L'(x)}{\varphi_L(x)}$.

\begin{lemma}
 \label{thetal} 
  \label{upos} 
Let $m_0\geq 6$.
\begin{enumerate}
\renewcommand{\labelenumi}{(\roman{enumi})}
\item For $v\in\R$ with $\theta_v(-d_{m_0,m_1}^{+})\!\geq\!\theta_L(-d_{m_0,m_1}^{+})$ we have $\theta_v\geq\theta_L$ on $\I$.
\item The limit $\lim_{x\rightarrow\infty}\theta_L(x)$ exists and is finite.
\end{enumerate}
\end{lemma}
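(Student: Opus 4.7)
The proof splits into two parts. For (i) I run a Sturm--Prüfer comparison, and for (ii) I perform an asymptotic analysis of the linear equation \eqref{varphil}.

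The setup for (i) is as follows. Using polar coordinates $\varphi_v=\rho_v\cos\theta_v$, $\varphi_v'=\rho_v\sin\theta_v$ (consistent with $\tan\theta=\varphi'/\varphi$) and similarly for $\varphi_L$, the $(m_1,m_0)$-ODE together with \eqref{varphil} yield
\begin{align*}
\theta_v' &= \alpha_{m_1,m_0}(x)\sin\theta_v\cos\theta_v-\sin^2\theta_v-\beta_{m_1,m_0}(x)\tfrac{\sin 2\varphi_v}{\varphi_v}\cos^2\theta_v,\\
\theta_L' &= \alpha_{m_1,m_0}(x)\sin\theta_L\cos\theta_L-\sin^2\theta_L-m_0\cos^2\theta_L.
\end{align*}
At any point $x_0\in\I$ where $\theta_v(x_0)=\theta_L(x_0)$ subtracting gives
\[
\theta_v'(x_0)-\theta_L'(x_0)=\cos^2\theta(x_0)\Bigl[m_0-\beta_{m_1,m_0}(x_0)\tfrac{\sin 2\varphi_v(x_0)}{\varphi_v(x_0)}\Bigr],
\]
so (i) reduces to verifying that the bracket is non-negative on $\I$. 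Combining the elementary estimate $|\sin(2y)/y|\leq 2$, monotonicity of $\beta_{m_1,m_0}$, and $\lim_{x\to\infty}2\beta_{m_1,m_0}(x)=m_0$, a case split on the signs of $\beta_{m_1,m_0}$ and $\sin(2\varphi_v)/\varphi_v$ gives non-negativity; in the delicate sub-case where both factors are negative I intend to use the sharper lower bound $\sin(2y)/y\geq -c$ for an explicit $c\approx 0.435$ together with the specific location of $-d_{m_0,m_1}^{+}$ to control $|\beta_{m_1,m_0}|$. A standard continuity/maximum-principle argument then forbids $\theta_v$ from descending below $\theta_L$, yielding (i).

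For (ii) the plan is to exploit the fact that $\alpha_{m_1,m_0}(x)\to m_0-1$ as $x\to\infty$, so \eqref{varphil} is asymptotic to the constant-coefficient equation
\[
\psi''-(m_0-1)\psi'+m_0\psi=0,
\]
whose characteristic polynomial $\lambda^2-(m_0-1)\lambda+m_0$ has discriminant $m_0^2-6m_0+1\geq 1$ for $m_0\geq 6$, and by Vieta both roots $\lambda_-<\lambda_+$ are real and positive. A standard result on asymptotically autonomous linear ODEs then gives $\varphi_L(x)=e^{\lambda x}(c+o(1))$ for some $\lambda\in\{\lambda_+,\lambda_-\}$ and $c\neq 0$; in particular $\varphi_L$ has constant sign for large $x$, $\theta_L$ stays in a single branch of $\arctan$, and $\varphi_L'(x)/\varphi_L(x)\to\lambda$, so $\theta_L(x)\to\arctan\lambda$, which is finite. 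The hypothesis $m_0\geq 6$ is essential here: for $m_0\in\{2,3,4,5\}$ the discriminant is negative, the roots are complex, and $\theta_L$ would rotate indefinitely.

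The main obstacle will be (i). While $m_0\geq 6$ enters cleanly in (ii) via positivity of the discriminant, in (i) it enters only indirectly through the global bound $2\beta_{m_1,m_0}\leq m_0$; the problematic sign combination $\beta_{m_1,m_0}(x)\cdot\sin(2\varphi_v)/\varphi_v>0$ will require exploiting both the precise choice of $-d_{m_0,m_1}^{+}$ and the confinement of $r_v$ from Theorem~\ref{streifenneg} to keep the bracket non-negative throughout $\I$.
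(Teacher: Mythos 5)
Your proposal is correct, and it splits naturally into a part that matches the paper and a part that does not. For (i) you are taking essentially the paper's route: the paper derives the same Pr\"ufer equations for $\theta_v$ and $\theta_L$, writes $u=\theta_v-\theta_L$ as a solution of $u'=s_1u+s_2$ with $s_2=\bigl(m_0-2\beta_{m_1,m_0}(x)\tfrac{\sin2\varphi_v}{2\varphi_v}\bigr)\cos^2\theta_v$, and concludes $u\geq0$ by variation of parameters --- which is the rigorous version of your ``contact-point plus maximum principle'' argument (the Gronwall form is needed because $\cos^2\theta$ can vanish at a contact point, so the strict-inequality version alone does not close). The crux you isolate, non-negativity of the bracket on $\I$, is exactly the paper's assertion $s_2\geq0$, which the paper does not justify; your case analysis is sound, and the sub-case you flag as delicate is in fact easier than you anticipate: by the definition of $d_{m_0,m_1}^{+}$ and monotonicity of $\beta_{m_1,m_0}$ one has $2\beta_{m_1,m_0}(x)\geq-2\beta_{m_0,m_1}(d_{m_0,m_1}^{+})\geq-1$ on $\I$, so the crude bound $\lvert\sin(2y)/(2y)\rvert\leq1$ already gives $\lvert2\beta_{m_1,m_0}(x)\tfrac{\sin2\varphi_v}{2\varphi_v}\rvert\leq1<m_0$ there, without the sharper constant $c\approx0.435$. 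For (ii) your route is genuinely different. The paper works directly with the Riccati-type equation $\theta_L'=h(\theta_L,x)$, locates $x_0$ with $\alpha_{m_1,m_0}(x_0)=2\sqrt{m_0}$ and an angle $t_0$ with $h(t_0,x_0)=0$, and traps $\theta_L$ between the levels $t_0+k\pi$ (where $h>0$ for $x>x_0$) and $\tfrac{\pi}{2}+k\pi$ (where $h<0$), extracting convergence from a monotonicity case split. You instead apply asymptotic integration (Levinson) to the linear equation, using that $\alpha_{m_1,m_0}(x)-(m_0-1)$ decays exponentially (hence is integrable) at $+\infty$ and that the limiting characteristic roots are real, distinct and positive. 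The two arguments hinge on the same inequality in different guises: your discriminant condition $(m_0-1)^2>4m_0$ is precisely the condition $m_0-1>2\sqrt{m_0}$, i.e.\ $m_0>3+2\sqrt{2}$, needed for the paper's $x_0$ to exist. Your route buys a sharper conclusion ($\theta_L(x)\to\arctan\lambda_{\pm}$ explicitly, which would also tighten the bound on $\Omega_L$ in the next lemma) at the cost of importing an asymptotic-integration theorem; the paper's argument is self-contained but yields only the existence of the limit.
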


\begin{proof}
We start by proving the first part. Its proof is similar to that of Lemma\,6 in \cite{BC}.
By (\ref{odez}) we obtain
\begin{align*}
\theta_v'(x)=-\sin^2\theta_v(x)+\tfrac{1}{2}\alpha_{m_1,m_0}(x)\sin2\theta_v(x)-2\beta_{m_1,m_0}(x)\cos^2\theta_v(x)\tfrac{\sin2\varphi_v(x)}{2\varphi_v(x)}.
\end{align*}
Similarly, (\ref{varphil}) yields
\begin{align}
\label{tl}
\theta_L'(x)=-\sin^2\theta_L(x)+\tfrac{1}{2}\alpha_{m_1,m_0}(x)\sin2\theta_L(x)-m_0\cos^2\theta_L(x).
\end{align}
Consequently, $u:=\theta_v-\theta_L$ satisfies $u'(x)=s_1(x)u(x)+s_2(x)$, with
\begin{align*}
&s_1(x)=\tfrac{1}{2}\alpha_{m_1,m_0}(x)\tfrac{\sin2\theta_v(x)-\sin2\theta_L(x)}{\theta_v(x)-\theta_L(x)}+
(m_0-1)\tfrac{\sin^2\theta_v(x)-\sin^2\theta_L(x)}{\theta_v(x)-\theta_L(x)},\\
&s_2(x)=\big(m_0-2\beta_{m_1,m_0}(x)\tfrac{\sin2\varphi_v(x)}{2\varphi_v(x)}\big)\cos^2\theta_v(x).
\end{align*}
Variation of parameters yields
\begin{align*}
u(x)=\exp(F(x))\left(\int_{-d_{m_0,m_1}^{+}}^xs_2(\xi)\exp(-F(\xi))d\xi+c\right),
\end{align*}
where $F(x)=\int_{-d_{m_0,m_1}^{+}}^xs_1(\xi)d\xi$ and $c\in\R$. The condition $u(-d_{m_0,m_1}^{+})\geq 0$ implies $c\geq 0$.
Since $s_2\geq 0$ we get $u(x)\geq 0$ for all $x\geq -d_{m_0,m_1}^{+}$, completing the proof of the first part.

\smallskip

Below we prove the second claim of this lemma.
Introduce the function $h:\R^2\rightarrow\R$ by $h(t,x)=-\sin^2t+\tfrac{1}{2}\alpha_{m_1,m_0}(x)\sin2t-m_0\cos^2t.$
For $x_0:=\mbox{artanh}(\tfrac{4\sqrt{m_0}+m_1-m_0}{m_0+m_1-2})$ and $t_0=-\mbox{arccos}(-\tfrac{1}{\sqrt{1+m_0}})$ we have $h(t_0,x_0)=0$.
Since $\sin2t_0>0$, $\alpha_{m_1,m_0}(x_0)>0$ and $\alpha_{m_1,m_0}$ is strictly increasing, we get
\begin{align}
\label{pref}
h(t_0+k\pi,x)=h(t_0,x)>h(t_0,x_0)=0\hspace{0.2cm}\mbox{for all}\hspace{0.2cm}x>x_0, k\in\Z.
\end{align}
Either $\theta'_L(x)\leq 0$ for all $x\geq x_0$ or there exists $x_1\geq x_0$ such that $\theta'_L(x_1)> 0$.
In the first case (\ref{tl}) and (\ref{pref}) imply that $\theta_L$ is bounded. Hence the limit $\lim_{x\rightarrow\infty}\theta_L(x)$
exists and is finite, whence the claim. In the second case either (a) $\theta'_L(x)\geq 0$ for all $x\geq x_1$, or (b) there exists a point $x_4>x_1$ such that $\theta'_L(x_4)<0$.

\smallskip

Since $h(\tfrac{\pi}{2}+k\pi,x)<0$ for $k\in\x$ and $x\geq x_0$, differential equation (\ref{tl}) implies that $\theta_L$ is bounded in case (a). Consequently, $\lim_{x\rightarrow\infty}\theta_L(x)$ exists and is finite. On the other hand (b) cannot occur: since $\theta'_L$ is continuous there exist $x_2,x_3\in(x_1,x_4)$ with $x_2<x_3$ such that $\theta_L'(x_2)>0$, $\theta_L'(x_3)<0$ and $\theta_L(x_2)=\theta_L(x_3)$. Using differential equation (\ref{tl}) we find that $\theta_L'(x_2)>0$ implies $\sin2\theta_L(x_2)>0$. 
Since $\alpha_{m_1,m_0}(x)$ is positive for $x\geq x_0$ and increasing we get
\begin{align*}
0<\theta_L'(x_2)=-\sin^2\theta_L(x_3)+\tfrac{1}{2}\alpha_{m_1,m_0}(x_2)\sin2\theta_L(x_3)-m_0\cos^2\theta_L(x_3)\leq\theta_L'(x_3)<0, 
\end{align*} 
a contradiction.
\end{proof}

The preceding lemma implies that $\Omega_L=-\tfrac{1}{\pi}(\theta_L(\infty)-\theta_L(-d_{m_0,m_1}^{+}))$ is well-defined.
 
\begin{lemma}
 \label{rot} 
  \label{tbound} 
Let $m_0\geq 6$.
\begin{enumerate}
\renewcommand{\labelenumi}{(\roman{enumi})}
\item For $v\in\R$ with $\theta_v(-d_{m_0,m_1}^{+})=\theta_L(-d_{m_0,m_1}^{+})$ we have $\Omega_v\leq\Omega_L$.
\item$\Omega_L$ is bounded from above by a constant which only depends on $m_0$ and $m_1$.
\end{enumerate}
\end{lemma}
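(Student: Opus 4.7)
\emph{Part (i)} follows at once from Lemma \ref{thetal}(i). Under the hypothesis $\theta_v(-d^+_{m_0,m_1}) = \theta_L(-d^+_{m_0,m_1})$, that lemma gives $\theta_v \geq \theta_L$ throughout $\I$. Both limits $\theta_v(\infty)$ and $\theta_L(\infty)$ exist and are finite (respectively from the definition of $\Omega_v$ and from Lemma \ref{upos}(ii)), so passing to the limit and using the equal boundary values yields
\begin{equation*}
\Omega_v - \Omega_L = -\tfrac{1}{\pi}\bigl(\theta_v(\infty)-\theta_L(\infty)\bigr) \leq 0.
\end{equation*}

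\emph{Part (ii).} The plan is to reduce the bound on $\Omega_L$ to a bound on the number of zeros of $\varphi_L$ on $\I$, and then control the latter via Sturm oscillation theory. Two solutions of (\ref{varphil}) with proportional initial data at $-d^+_{m_0,m_1}$ differ only by a nonzero scalar, so they induce the same $\theta_L$; hence $\Omega_L$ depends on $\varphi_L$ only through the initial angle $\theta_L(-d^+_{m_0,m_1}) \in (-\tfrac{\pi}{2},\tfrac{\pi}{2}]$. Removing the first-order term via $\varphi_L = \exp\!\bigl(\tfrac{1}{2}\int_{-d^+_{m_0,m_1}}^{x}\alpha_{m_1,m_0}(\xi)\,d\xi\bigr)\,u$ converts (\ref{varphil}) into the Schr\"odinger form $u''+Q\,u=0$ with
\begin{equation*}
Q(x) = m_0 + \tfrac{1}{2}\alpha_{m_1,m_0}'(x) - \tfrac{1}{4}\alpha_{m_1,m_0}(x)^2.
\end{equation*}
Since $\alpha_{m_1,m_0}(\infty)= m_0-1$ and $\alpha_{m_1,m_0}'(\infty)=0$, we have $Q(\infty) = m_0 - \tfrac{1}{4}(m_0-1)^2 < 0$ precisely when $m_0\geq 6$ (the elementary inequality $(m_0-1)^2 > 4m_0$, equivalent to the discriminant condition $m_0^2 - 6m_0 + 1 > 0$). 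Thus there exists $X=X(m_0,m_1)$ such that $Q<0$ on $[X,\infty)$, and so $\varphi_L$ has at most one zero on $[X,\infty)$.

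On the compact interval $[-d^+_{m_0,m_1},X]$ the function $Q$ is uniformly bounded, so Sturm comparison with the constant-coefficient equation $u''+(\sup Q)\,u = 0$ yields an upper bound $N=N(m_0,m_1)$ on the number of zeros of $\varphi_L$ there. Hence $\varphi_L$ has at most $N+1$ zeros on $\I$. At each such zero $z$, Picard-Lindel\"of applied to (\ref{varphil}) forces $\varphi_L'(z)\neq 0$, and the identity $\theta_L' = \bigl(\varphi_L''\varphi_L-(\varphi_L')^2\bigr)/\bigl(\varphi_L^2+(\varphi_L')^2\bigr)$ yields $\theta_L'(z)=-1$. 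Consequently the continuous branch of $\theta_L$ passes through $-\tfrac{\pi}{2}\pmod{\pi}$ with slope $-1$ at each zero, and between consecutive zeros $\theta_L$ is confined to a translate of $(-\tfrac{\pi}{2},\tfrac{\pi}{2})$. Combining these observations, $\theta_L(\infty) - \theta_L(-d^+_{m_0,m_1}) \geq -(N+1)\pi$, so $\Omega_L \leq N+1$, a constant depending only on $m_0$ and $m_1$.

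The main obstacle is identifying the correct threshold for $Q(\infty)<0$, which isolates precisely the regime $m_0\geq 6$ of Theorem C and thereby clarifies why the construction of Section \ref{sec3} breaks down here. Once this sign is established, the remainder is a routine combination of Sturm-Liouville oscillation theory with the standard computation of the Pr\"ufer angle at zeros of the underlying wave function.
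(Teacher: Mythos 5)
Part (i) of your proposal is exactly the paper's argument: Lemma \ref{upos}(i) gives $\theta_v\geq\theta_L$ on $\I$, and passing to the limit with equal initial angles yields $\Omega_v\leq\Omega_L$. For part (ii), however, you take a genuinely different and self-contained route. The paper bounds $\theta_L'$ pointwise from the Pr\"ufer--Riccati equation (\ref{tl}) to control $\theta_L$ on the compact initial interval up to the explicit point $x_0$, and then uses the sign information on the auxiliary function $h(t,x)$ (from the proof of Lemma \ref{thetal}) to show that the total variation of $\theta_L$ on $[x_0,\infty)$ is at most $2\pi$, via a case analysis on the monotonicity of $\theta_L$. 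You instead pass to the Schr\"odinger normal form $u''+Qu=0$ and invoke Sturm oscillation theory: $Q$ bounded on the compact piece gives finitely many zeros there, and $Q(\infty)=m_0-\tfrac14(m_0-1)^2<0$ gives at most one zero near infinity, after which the standard fact that the Pr\"ufer angle crosses each level $\equiv\tfrac{\pi}{2}\pmod\pi$ with slope $-1$ converts the zero count into a bound on $\Omega_L$. Your computation of $Q$ and of the threshold $(m_0-1)^2>4m_0$ is correct, and it is worth noting that this is literally the same inequality that makes the paper's point $x_0=\operatorname{artanh}\bigl(\tfrac{4\sqrt{m_0}+m_1-m_0}{m_0+m_1-2}\bigr)$ exist, so the two arguments isolate the regime $m_0\geq6$ for the same underlying reason. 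Your version is arguably cleaner and makes the role of the threshold more transparent; the paper's version avoids the change of variables and stays entirely at the level of the angle equation. The only blemish is a harmless off-by-one: with at most $N+1$ zeros the confinement argument gives $\theta_L(\infty)-\theta_L(-d_{m_0,m_1}^{+})\geq-(N+2)\pi$, hence $\Omega_L\leq N+2$ rather than $N+1$, which of course does not affect the statement.
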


\begin{proof}
We start by proving (i).
The first part of Lemma\,\ref{upos} implies $\theta_v-\theta_L\geq 0$ on $\I$ which is equivalent to the inequality
$$ -\tfrac{1}{\pi}(\theta_v(x)-\theta_v(-d_{m_0,m_1}^{+}))\leq -\tfrac{1}{\pi}(\theta_L(x)-\theta_L(-d_{m_0,m_1}^{+}))$$
 for all $x\geq -d_{m_0,m_1}^{+}$. Hence we get $\Omega_v\leq\Omega_L$ which establishes the claim. 

\smallskip

Next we prove (ii).
The differential equation (\ref{tl}) yields $l_1\leq \theta_L'(x)\leq l_2$ where $l_1=-\tfrac{1}{2}(2m_0+m_1+1)$ and $l_2=\tfrac{1}{2}(m_0-1)$.
Hence 
\begin{align}
\label{ab}
(x+d_{m_0,m_1}^{+})l_1\leq\theta_L(x)-\theta_L(-d_{m_0,m_1}^{+})\leq (x+d_{m_0,m_1}^{+})l_2
\end{align}
for $x\in\R$. Let $x_0\in\R$ be as in the proof of Lemma\,\ref{thetal}. Then either\\ (a) $\theta_L'(x)\leq 0$ for all $x\geq x_0$, or\\ (b) 
there exists $x_1>x_0$ such that $\theta_L'(x)\geq 0$ for all $x\geq x_1$ and $\theta_L'(x)\leq 0$ for $x_0\leq x<x_1$.

\smallskip

In case (a) the proof of Lemma\,\ref{thetal} yields $\lvert\theta_L(x_0)-\theta_L(x)\lvert\leq\pi$ for $x\geq x_0$. Combining this inequality
with (\ref{ab}) applied to $x=x_0$ we get
$$(x_0+d_{m_0,m_1}^{+})l_1-\pi\leq\theta_L(x)-\theta_L(-d_{m_0,m_1}^{+})\leq (x_0+d_{m_0,m_1}^{+})l_2+\pi\hspace{0.2cm}\mbox{for all}\hspace{0.2cm}x\geq x_0.$$ 
Taking the limit as $x$ approaches infinity we obtain
$$-1-\tfrac{1}{\pi}(x_0+d_{m_0,m_1}^{+})l_2\leq\Omega_L\leq 1-\tfrac{1}{\pi}(x_0+d_{m_0,m_1}^{+})l_1.$$
Since the right hand side of the previous inequality obviously depends on $m_0$ and $m_1$ only, this establishes the claim in case (a).

\smallskip

In case (b) the proof of Lemma\,\ref{thetal} implies $\lvert\theta_L(x)-\theta_L(x_0)\lvert\leq\pi$ for $x_0\leq x<x_1$. Furthermore, since $h(\tfrac{\pi}{2}+k\pi,x)<0$
we have $\lvert\theta_L(x)-\theta_L(x_1)\lvert\leq\pi$ for $x\geq x_1$.
Consequently, $\lvert\theta_L(x)-\theta_L(x_0)\lvert\leq 2\pi$ for all $x\geq x_0$. Now proceed as in case (a).
\end{proof}

Finally, we are now ready to prove Theorem\,C.

\begin{theorem}
For $m_0\geq6$ the nodal number of $r_v$, $v\in\R$, is bounded from above by a constant which only depends on $m_0$ and $m_1$.
\end{theorem}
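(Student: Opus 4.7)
The plan is to synthesize the three preparatory results of this section into a short chain of inequalities, since essentially all of the technical work has already been done in the proofs of Lemmas \ref{thetal} and \ref{rot}. First I would make precise the remark stated in the strategy discussion before Lemma \ref{thetal} that Lemma \ref{streifen} applied to $\varphi_v(x)=r_v(-x)-\tfrac{\pi}{2}$ yields $|\frak{N}(r_v)-\lfloor\Omega_v\rfloor|\leq 1$. Indeed, in the regime $x\geq d_{m_0,m_1}^{+}$ of the original variable, Lemma \ref{streifen} forces $r_v$ either to remain in a strip of height $\pi$, in which case it can cross $\tfrac{\pi}{2}$ at most once more, or to escape monotonically to $\pm\infty$, in which case it again crosses $\tfrac{\pi}{2}$ only finitely many additional times; on the complementary half-line the zero count of $\varphi_v$ is controlled by the rotation of $(\varphi_v,\varphi_v')$ up to the usual off-by-one discrepancy between winding and intersection numbers. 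Thus it suffices to bound $\Omega_v$ uniformly in $v$ by a quantity depending only on $m_0$ and $m_1$.

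Second I would exploit the two-dimensional solution space of the linearized equation (\ref{varphil}) to produce a reference function $\varphi_L$ adapted to the given $v$. The initial angle $\theta_v(-d_{m_0,m_1}^{+})$ is a fixed real number, and by prescribing the ratio $\varphi_L'(-d_{m_0,m_1}^{+})/\varphi_L(-d_{m_0,m_1}^{+})$ I can choose a solution $\varphi_L$ of (\ref{varphil}) with the matching initial angle $\theta_L(-d_{m_0,m_1}^{+})=\theta_v(-d_{m_0,m_1}^{+})$. Lemma \ref{rot}(i) then yields $\Omega_v\leq\Omega_L$ and Lemma \ref{rot}(ii) yields $\Omega_L\leq C(m_0,m_1)$. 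Combining these two inequalities with the first step gives $\frak{N}(r_v)\leq C(m_0,m_1)+1$, which is the conclusion of the theorem.

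The only point that requires a small verification is the uniformity of the bound in Lemma \ref{rot}(ii) over the one-parameter family of linearized solutions one is forced to introduce in step two; this is the step I would expect to be the main (though minor) obstacle. A glance at the proof of Lemma \ref{rot}(ii) shows that the constants $l_1,l_2$ and the transition point $x_0$ are extracted from the ODE coefficients $\alpha_{m_1,m_0},\beta_{m_1,m_0}$ alone, and the subsequent case analysis bounding $\theta_L(\infty)-\theta_L(-d_{m_0,m_1}^{+})$ proceeds identically regardless of the chosen $\varphi_L$. Hence $\Omega_L\leq C(m_0,m_1)$ holds uniformly, and the theorem follows.
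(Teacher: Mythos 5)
Your proposal is correct and follows essentially the same route as the paper: match the initial angle $\theta_L(-d_{m_0,m_1}^{+})=\theta_v(-d_{m_0,m_1}^{+})$ by choosing initial data for the linearized equation, then chain $\frak{N}(r_v)\leq\lfloor\Omega_v\rfloor+1\leq\Omega_L+1$ via Lemma~\ref{streifen} and Lemma~\ref{rot}. Your additional remarks spelling out the off-by-one comparison between nodal and winding numbers and the uniformity of the bound on $\Omega_L$ over the choice of $\varphi_L$ are correct elaborations of points the paper leaves implicit.
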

\begin{proof}
Choose $\varphi_L(-d_{m_0,m_1}^{+})$ and $\varphi_L'(-d_{m_0,m_1}^{+})$
such that $\theta_v(-d_{m_0,m_1}^{+})=\theta_L(-d_{m_0,m_1}^{+})$. Consequently, $\frak{N}(r_v)\leq\lfloor\Omega_v\rfloor+1$ and Lemma\,\ref{rot} imply
$$\frak{N}(r_v)\leq\lfloor\Omega_v\rfloor+1\leq\Omega_L+1\leq N_0+1,$$ which establishes the claim. 
\end{proof}

\begin{remark}
The estimates used in this section are not optimal.  For example the bound on $\theta_L$ in the proof of Lemma\,\ref{tbound}
can easily be improved.  
\end{remark}

\section{Behaviour for large initial velocities} 
\label{sec4}
Throughout this section let $m_1\geq m_0\geq2$, $r_v:\R\rightarrow\R$ as in Lemma\,\ref{asy} and set $\varphi_v=r_v-\frac{\pi}{2}$.
We show that the solutions $r_v$ of the $(m_0,m_1)$-BVP converge to a limiting configuration as $v$ goes to infinity, namely, for large enough initial velocities $r_v$ becomes arbitrarily close to $\frac{\pi}{2}$
on each open interval in $(-\infty,\infty)$. As a byproduct we prove that for $2\leq m\leq 5$ there are infinitely many solutions of the $(m,m)$-BVP with nodal number zero.

\smallskip

The following two lemma are used in the proof of Theorem\,D.
In the next lemma we show that for every interval of the form $\lbrack x_0, d_{m_0,m_1}^{-}\rbrack$, the energy $V_{m_0,m_1}^{r_v}$ becomes arbitrarily small on this interval
if we chose the velocity $v$ to be "large enough".

\begin{lemma}
\label{bigv}
For $\epsilon>0$ and $x_0\leq d_{m_0,m_1}^{-}$ there exists $v_0>0$ such that $V_{m_0,m_1}^{r_v}(x)<\epsilon$ for $x_0\leq x\leq d_{m_0,m_1}^{-}$ and $v\geq v_0$.
\end{lemma}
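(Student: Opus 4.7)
The plan is to reduce, via monotonicity of the Lyapunov function, to a single-point estimate, and then to establish that estimate by a rescaling argument with passage to an autonomous limit. First I would note that $d_{m_0,m_1}^{-} \leq Z_{m_0,m_1}^{\alpha}$, which is implicit in the proof of Theorem \ref{streifenneg} (there $d_{m_0,m_1}^{-} = -C_{m_0,m_1}$ with $C_{m_0,m_1} \geq Z_{m_1,m_0}^{\alpha} = -Z_{m_0,m_1}^{\alpha}$). Lemma \ref{bound2} then gives that $V_{m_0,m_1}^{r_v}$ is monotonically decreasing on $(-\infty, d_{m_0,m_1}^{-}]$, so it suffices to produce, for each $v$ sufficiently large, a single point $x^{*}(v) \leq x_0$ with $V_{m_0,m_1}^{r_v}(x^{*}(v)) < \epsilon$; the required bound then propagates forward to all of $[x_0, d_{m_0,m_1}^{-}]$.

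To construct such an $x^{*}$, I introduce the rescaling $\tilde{r}_v(y) := r_v(y - \log v)$. Since $r_v(t) = v t + O(v t^3)$ near $t = 0$ and $t \sim e^x$ near $x = -\infty$, this family satisfies the universal asymptotic $\tilde{r}_v(y_*) \to e^{y_*}$ and $\tilde{r}_v'(y_*) \to e^{y_*}$ as $v \to \infty$ at every fixed $y_*$. In terms of $y$ the $(m_0,m_1)$-ODE reads
\begin{align*}
\tilde{r}_v''(y) - \alpha_{m_0,m_1}(y - \log v)\,\tilde{r}_v'(y) + \beta_{m_0,m_1}(y - \log v)\sin 2\tilde{r}_v(y) = 0,
\end{align*}
with coefficients converging uniformly on compact $y$-intervals to the constants $1 - m_0$ and $-\tfrac{m_0}{2}$ as $v \to \infty$. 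Anchoring at a very negative $y_*$, standard $C^1$ continuous dependence on parameters and initial data yields $\tilde{r}_v \to \tilde{r}_\infty$ in $C^1_{\mathrm{loc}}$, where $\tilde{r}_\infty$ is the unique solution of the autonomous limit ODE
\begin{align*}
\tilde{r}_\infty''(y) + (m_0 - 1)\tilde{r}_\infty'(y) - \tfrac{m_0}{2}\sin 2\tilde{r}_\infty(y) = 0
\end{align*}
with $\tilde{r}_\infty(y) \sim e^y$ at $-\infty$ (the positive branch of the unstable manifold at the saddle $(0,0)$, whose linearization has eigenvalues $1$ and $-m_0$).

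To control $\tilde{r}_\infty$ as $y \to \infty$, I use the autonomous energy $E := \tfrac{1}{2}(\tilde{r}_\infty')^2 + \tfrac{m_0}{4}\cos 2\tilde{r}_\infty$, which satisfies $\dot E = -(m_0 - 1)(\tilde{r}_\infty')^2 \leq 0$ and equals $m_0/4$ at $y = -\infty$. Since $E(k\pi, 0) = m_0/4$ for every $k \in \Z$, strict decrease of $E$ along the nonconstant trajectory prevents $\tilde{r}_\infty$ from ever attaining value $k\pi$; the trajectory is thus trapped in the strip $(0, \pi)$, and a LaSalle argument yields $(\tilde{r}_\infty(y), \tilde{r}_\infty'(y)) \to (\pi/2, 0)$ as $y \to \infty$, since $(\pi/2, 0)$ is the only equilibrium in the strip. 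Concluding: pick $C$ so large that $\tfrac{1}{2}(\tilde{r}_\infty'(C))^2 + \tfrac{m_0 + 1}{2}\cos^2 \tilde{r}_\infty(C) < \epsilon/2$, then $v_0$ so large that for $v \geq v_0$ we have $-\log v + C \leq x_0$ and
\begin{align*}
V_{m_0,m_1}^{r_v}(-\log v + C) = \tfrac{1}{2}(\tilde{r}_v'(C))^2 - \beta_{m_0,m_1}(-\log v + C)\cos^2 \tilde{r}_v(C) < \epsilon;
\end{align*}
setting $x^{*}(v) := -\log v + C$ and invoking the monotonicity reduction completes the argument. The main obstacle is the LaSalle step: ruling out that $\tilde{r}_\infty$ oscillates toward another attractor is exactly where the damping hypothesis $m_0 \geq 2$ (so $m_0 - 1 > 0$) enters essentially.
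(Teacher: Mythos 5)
Your proposal is correct and follows essentially the same route as the paper: reduce to a single-point estimate via the monotonicity of $V^{r_v}_{m_0,m_1}$ on $(-\infty,d^{-}_{m_0,m_1}]$, rescale by $\log v$ to obtain convergence to the autonomous profile solving $\psi''+(m_0-1)\psi'+\tfrac{1}{2}m_0\sin2\psi=0$, and use that this profile tends to the equilibrium to make $V^{r_v}_{m_0,m_1}(C-\log v)$ small. The only differences are that the paper imports both convergence facts from Lemma\,3.3 of \cite{ga} (which you reprove via the limit ODE and a LaSalle argument) and controls the term $\tfrac{1}{2}r_v'(x)^2$ separately using $\lim_{x\rightarrow-\infty}r_v'(x)=0$ rather than through the $C^1_{\mathrm{loc}}$ convergence of the rescaled family that you invoke.
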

\begin{proof}

From $\lim_{x\rightarrow -\infty}r_v'(x)=0$ we have that there exists $x_1\leq d_{m_0,m_1}^{-}$ such that $r_v'(x)^2<\epsilon$ for $x\leq x_1$.
Furthermore, by the proof of Lemma\,3.3 in \cite{ga} we get
\begin{align}
\label{limit}
\lim_{v\rightarrow\infty}\varphi_v(x-\log v)=\psi(x)
\end{align}
for all $x\in\R$, where
$\psi:\R\rightarrow\R$ denotes the unique solution of 
\begin{align*}
\psi''(x)+(m_0-1)\psi'(x)+\tfrac{1}{2}m_0\sin2\psi(x)=0,
\end{align*}
satisfying $\psi(x)\simeq -\tfrac{\pi}{2}+\exp(x)$ as $x\rightarrow -\infty$.
From \cite{ga} we further have $\lim_{x\rightarrow\infty}\psi(x)=0$.
Consequently, for a given $\epsilon_0>0$ there exists $x_2\in\R$ such that $2\lvert\psi(x_2)\lvert<\epsilon_0$. 
By (\ref{limit}) there
exists an $v_0\in\R$ such that $\lvert \varphi_v(x_2-\log v)\lvert<\epsilon_0$ for all $v\geq v_0$.
Since $\beta_{m_0,m_1}$ is bounded, we can choose $\epsilon_0>0$ so small that 
\begin{align*}
2\lvert\beta_{m_0,m_1}(x_2-\log v)\sin^2\varphi_v(x_2-\log v)\lvert<\epsilon
\end{align*}
for all $v\geq v_0$. We may assume that $v_0$ is so large that $x_2-\log v_0\leq\min(x_0,x_1)$.
Thus we get $V_{m_0,m_1}^{r_v}(x_2-\log v)<\epsilon$ for $v\geq v_0$.
Since $V_{m_0,m_1}^{r_v}$ decreases strictly on the interval $(-\infty,d_{m_0,m_1}^{-}]$
this implies the claim.
\end{proof}

Following the proof of Lemma\,4 in \cite{BC} we show that $\left(\varphi_v(x),\varphi_v'(x)\right)$ stays close to zero for bounded $x\geq d_{m_0,m_1}^{-}$ provided that $v$ is chosen large enough.
As in \cite{BC} we introduce the distance function 
$\rho_v:\R\rightarrow\R$, $x\mapsto\sqrt{\varphi_v(x)^2+\varphi_v'(x)^2}$, which satisfies $\rho_v>0$ by Lemma\,\ref{asy}.

\begin{lemma}
\label{stayclose}
For any $x_0,x_1\in\R$ with $x_0\leq x_1$ and $\eta>0$, there exists $v_0\in\R$ such that $v\geq v_0$ implies $\rho_v(x)<\eta$ for $x_0\leq x\leq x_1$.
\end{lemma}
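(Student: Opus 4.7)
The plan is to follow the approach of Lemma 4 in \cite{BC}, splitting $[x_0,x_1]$ into a ``far left'' region, where smallness of $\rho_v$ will be propagated via the Lyapunov function $V_{m_0,m_1}^{r_v}$ from Lemma \ref{bound2}, and a bounded region, where a Gronwall-type inequality derived from the $(m_0,m_1)$-ODE for $\varphi_v$ suffices.

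First I would pick a base point $x^*(v)\le x_0$ at which $\rho_v$ is already small. Equation (\ref{limit}) from the proof of Lemma \ref{bigv}, together with continuous dependence on initial data, gives $\varphi_v(y-\log v)\to\psi(y)$ and $\varphi_v'(y-\log v)\to\psi'(y)$ as $v\to\infty$ for every $y\in\R$. Since $\psi(y),\psi'(y)\to 0$ as $y\to\infty$, for any $\epsilon_1>0$ one picks $Y$ with $\sqrt{\psi(Y)^2+\psi'(Y)^2}<\epsilon_1/2$, and then $v_0^{(1)}$ so that for $v\ge v_0^{(1)}$ both $x^*(v):=Y-\log v<x_0$ and $\rho_v(x^*(v))<\epsilon_1$ hold.

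Next I would propagate this smallness up to $d_{m_0,m_1}^-$ using $V_{m_0,m_1}^{r_v}$. On $(-\infty,d_{m_0,m_1}^-]$, $V_{m_0,m_1}^{r_v}$ is a sum of two nonnegative terms (since $\beta_{m_0,m_1}<0$ there) and is decreasing by Lemma \ref{bound2}, so $V_{m_0,m_1}^{r_v}(x)\le V_{m_0,m_1}^{r_v}(x^*(v))=O(\epsilon_1^2)$ on $[x^*(v),d_{m_0,m_1}^-]$. This forces $|\varphi_v'|$ and $|\sin\varphi_v|$ to be $O(\epsilon_1)$ throughout. Since $|\sin\varphi_v|$ small only locates $\varphi_v$ near a multiple of $\pi$, a continuity argument anchored at $\varphi_v(x^*(v))\approx\psi(Y)\approx 0$ (and noting that crossing $\pm\pi/2$ would yield $|\sin\varphi_v|=1$) traps $\varphi_v$ in a small neighborhood of $0$; in particular $\rho_v(d_{m_0,m_1}^-)$ is small. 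Finally, on the bounded interval $[d_{m_0,m_1}^-,x_1]$, differentiating $\rho_v^2=\varphi_v^2+\varphi_v'^2$, applying the ODE for $\varphi_v$, and using $|\sin 2\varphi_v|\le 2|\varphi_v|$ yields $|\rho_v'|\le(1+|\alpha_{m_0,m_1}|+2|\beta_{m_0,m_1}|)\rho_v$; integration gives $\rho_v(x)\le\rho_v(d_{m_0,m_1}^-)\,e^{C(x_1-d_{m_0,m_1}^-)}$ for some $C=C(m_0,m_1,x_1)$. Choosing $\epsilon_1$ small enough then forces $\rho_v<\eta$ on $[x_0,x_1]$.

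The main obstacle will be the middle step. The interval $[x^*(v),d_{m_0,m_1}^-]$ has length of order $\log v$, and although both $|\varphi_v'|$ and $|\sin\varphi_v|$ are $O(\epsilon_1)$ on it, $|\sin\varphi_v|$ alone only pins $\varphi_v$ down modulo $\pi$. The delicate point is to rule out $\varphi_v$ migrating across $\pi/2$ to a neighborhood of $\pi$ (or $-\pi$) despite the small derivative; this is precisely where the anchoring at the small base value $\varphi_v(x^*(v))$, obtained in the first step, becomes essential.
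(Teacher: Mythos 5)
Your proposal is correct and follows essentially the same route as the paper (which itself adapts Lemma 4 of \cite{BC}): smallness at a far-left base point via the limit profile $\psi$, propagation through the decreasing Lyapunov function $V_{m_0,m_1}^{r_v}$ up to $d_{m_0,m_1}^{-}$ (the paper packages this as Lemma \ref{bigv}), and a Gronwall estimate $\rho_v'/\rho_v\leq c$ on the remaining bounded interval. The only cosmetic difference is that you resolve the mod-$\pi$ ambiguity in $\sin^2\varphi_v$ by a connectedness argument anchored at the base value, whereas the paper invokes Theorem \ref{streifenneg} together with $v>0$ for the same purpose.
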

\begin{proof}
The $(m_0,m_1)$-ODE implies
\begin{align*}
\rho_v(x)\rho_v'(x)&=\varphi_v(x)\varphi_v'(x)+\alpha_{m_0,m_1}(x)\varphi_v'(x)^2+2\beta_{m_0,m_1}(x)\tfrac{\sin2\varphi_v(x)}{2\varphi_v(x)}\varphi_v(x)\varphi_v'(x)\\
\notag &\leq (m_1+1)\lvert\varphi_v(x)\varphi_v'(x)\lvert+(m_1-1)\varphi_v'(x)^2\leq c\rho_v(x)^2,
\end{align*}
where we use $\varphi_v'(x)^2\leq \rho_v(x)^2$, $2\lvert\varphi_v(x)\varphi_v'(x)\lvert\leq \rho_v(x)^2$ and set $c=\tfrac{1}{2}(3m_1\!-\!1)$.
Thus $\tfrac{\rho_v'(x)}{\rho_v(x)}\leq c$. Integrating this inequality from a given $T_-\leq\min(x_0,d_{m_0,m_1}^{-})$ to a point $x\geq T_-$ yields
\begin{align}
\label{ine}
\rho_v(x)\leq \exp(c(x-T_{-}))\rho_v(T_-).
\end{align}
Lemma\,\ref{bigv} guarantees
for every $\epsilon>0$ the existence of a velocity $v_1>0$ such that $V_{m_0,m_1}^{r_v}(T_{-})<\epsilon$ for all $v\geq v_1$. 
Since for $x\leq d_{m_0,m_1}^{-}$ both summands in the definition of $V^r_{m_0,m_1}(x)$ are positive we get 
\begin{align*}
\lvert \varphi_v'(T_{-})\lvert<\sqrt{2 \epsilon}\hspace{0.2cm}\,\mbox{and}\hspace{0.2cm}\sin^2\varphi_v(T_{-})<\tfrac{\epsilon}{\lvert\beta_{m_0,m_1}(T_{-})\lvert}
\end{align*}
for all $v\geq v_1$. Since $v>0$ Theorem\,\ref{streifenneg} implies
 $\rho_v(T_{-})$ becomes arbitrarily small if $\epsilon$ converges to zero.
Consequently, for any $T_+\geq\max(x_1,d_{m_0,m_1}^{-})$ and $\eta>0$ there exists a velocity $v_2>0$ such that $$\rho_v(T_{-})<\exp(-c(T_+-T_{-}))\eta$$ for all $v\geq v_2$. Substituting this into (\ref{ine}) yields $\rho_v(x)<\eta$ for $T_{-}\leq x\leq T_+$ and $v\geq v_0:=\max(v_1,v_2)$, whence the claim.
\end{proof}


We are now ready to prove Theorem\,D, i.e., that for each $x_0\in\R$ there exists a velocity $v_0$ such that $r_v$ becomes arbitrarily close to $\tfrac{\pi}{2}$ on the interval $(x_0,\infty)$.

\begin{theorem}
\label{grossev}
Let  $\rho_v$ be the distance function associated to a solution $r_v$ of the $(m_0,m_1)$-BVP.
For $\epsilon>0$ and $x_0\in\R$ there exists $v_0\in\R$ such that $\rho_v(x)<\epsilon$ for $x\geq x_0$, $v\geq v_0$.
\end{theorem}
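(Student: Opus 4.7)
The plan is to handle separately the compact part $[x_0, x_1]$ and the tail $[x_1,\infty)$, using Lemma~\ref{stayclose} on the former and a Lyapunov/compactness argument on the latter. Choose $x_1 \geq \max(x_0, d_{m_0,m_1}^{+})$ so large that $\beta_{m_0,m_1}(x_1) > B^2$, where $B = m_1/(2(m_1-1))$ is the constant from Lemma~\ref{bounded23}; this is possible because $\beta_{m_0,m_1}(x) \to m_1/2 > B^2$ for $m_1 \geq 2$. Fix $\eta > 0$ so small that $\beta_{m_0,m_1}(x_1)\cos^2\eta > B^2/2$, and let $v_0^{(1)}$ be as in Lemma~\ref{stayclose} so that $\rho_v(x) < \eta$ on $[x_0,x_1]$ whenever $v \geq v_0^{(1)}$. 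By Lemma~\ref{increase} the Lyapunov function $W^{r_v}_{m_0,m_1}(x) = \tfrac12\varphi_v'(x)^2 + \beta_{m_0,m_1}(x)\cos^2\varphi_v(x)$ is monotone increasing, so $W^{r_v}_{m_0,m_1}(x) \geq W^{r_v}_{m_0,m_1}(x_1) \geq \beta_{m_0,m_1}(x_1)\cos^2\eta > B^2/2$ for all $x \geq x_1$. If $r_v$ crossed $0$ or $\pi$ at some $x^{\ast} \geq x_1$, then $|r_v'(x^{\ast})| = \sqrt{2 W^{r_v}_{m_0,m_1}(x^{\ast})} > B$, and Lemma~\ref{bounded23} would force $r_v \to \pm\infty$, contradicting the BVP condition. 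Hence $\varphi_v(x) \in (-\pi/2,\pi/2)$ for every $x \geq x_1$ and $r_v(x) \to \pi/2$.

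To upgrade this trapping to the quantitative bound $\rho_v < \epsilon$, I argue by contradiction, assuming $\epsilon < \pi/2$ without loss of generality. Suppose a sequence of BVP solutions with $v_n \to \infty$ has $\rho_{v_n}$ attaining $\epsilon$ on $[x_1,\infty)$, and let $\tau_n \geq x_1$ denote the first such point. If a subsequence of $\tau_n$ is bounded by some $T$, Lemma~\ref{stayclose} applied on $[x_0,T]$ with tolerance $\epsilon/2$ yields $\rho_{v_n}(\tau_n) < \epsilon/2$ for $n$ large, a contradiction. Otherwise $\tau_n \to \infty$, and I consider the shifted functions $\tilde\varphi_n(s) := \varphi_{v_n}(\tau_n + s)$. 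They satisfy the ODE whose coefficients $\alpha_{m_0,m_1}(\tau_n + s), \beta_{m_0,m_1}(\tau_n + s)$ converge uniformly on compacta to $m_1-1$ and $m_1/2$, and both $\tilde\varphi_n$ and $\tilde\varphi_n'$ are uniformly bounded (the former by $\pi/2$ from the trapping, the latter via Lemma~\ref{bound}). Arzel\`a--Ascoli with a diagonal extraction produces a subsequence converging on compact sets to a bounded $\tilde\varphi \in C^2(\R)$ with $|\tilde\varphi| \leq \pi/2$, $\tilde\varphi(0)^2 + \tilde\varphi'(0)^2 = \epsilon^2$, and
\begin{equation*}
\tilde\varphi''(s) = (m_1-1)\tilde\varphi'(s) + \tfrac{m_1}{2}\sin 2\tilde\varphi(s).
\end{equation*}

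Such a limit cannot exist. Its energy $\tilde E(s) := \tfrac12 \tilde\varphi'(s)^2 - \tfrac{m_1}{2}\sin^2\tilde\varphi(s)$ satisfies $\tilde E' = (m_1-1)(\tilde\varphi')^2 \geq 0$ and is bounded, so $\tilde\varphi' \in L^2(\R)$; Barbalat's lemma (using boundedness of $\tilde\varphi''$) together with the limiting ODE forces $\tilde\varphi'(\pm\infty) = 0$ and $\tilde\varphi(\pm\infty) \in \{0, \pm\pi/2\}$. Crucially, by the definition of $\tau_n$ one has $|\tilde\varphi_n(s)| \leq \rho_{v_n}(\tau_n + s) < \epsilon$ on $(x_1 - \tau_n, 0]$; passing to the limit gives $|\tilde\varphi(s)| \leq \epsilon < \pi/2$ for all $s \leq 0$, hence $\tilde\varphi(-\infty) = 0$ and $\tilde E(-\infty) = 0$. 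Monotonicity of $\tilde E$ then gives $\tilde E \geq 0$ on $\R$, which combined with $\tilde E(+\infty) \in \{0, -m_1/2\}$ forces $\tilde E \equiv 0$, $\tilde\varphi' \equiv 0$, and $\tilde\varphi \equiv 0$, contradicting $\tilde\rho(0) = \epsilon$. The main obstacle is precisely this tail analysis: the naive Gronwall-type bound $\rho_v(x) \leq e^{c(x-x_1)}\rho_v(x_1)$ from the proof of Lemma~\ref{stayclose} blows up on $[x_1,\infty)$, so one is forced to exploit the BVP hypothesis globally via the compactness argument and the autonomous limiting equation to rule out a ``late'' excursion whose location $\tau_n$ escapes to infinity.
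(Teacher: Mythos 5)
Your proof is correct in substance and shares the paper's overall decomposition --- Lemma~\ref{stayclose} on a compact initial segment, then a Lyapunov-function argument on the tail --- but the tail argument itself is genuinely different. The paper stays quantitative: it shows $W^{r_v}_{m_0,m_1}(T)\geq\mu\tfrac{m_1}{2}$ for large $v$, so that the remaining ``energy budget'' $(1-\mu)\tfrac{m_1}{2}$ on $[T,\infty)$ is small, and then shows that any crossing of a level $k_0\pi+\delta$ would force $\tfrac{d}{dx}W^{r_v}_{m_0,m_1}$ to stay bounded below by a fixed positive constant on an interval of fixed length $\epsilon_0$ (using boundedness of the second derivative of $W^{r_v}_{m_0,m_1}$), overspending the budget; a final bootstrap with $\lambda\epsilon$ controls $r_v'$ as well. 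You instead first trap $r_v$ in $(0,\pi)$ on the tail via Lemma~\ref{bounded23} (essentially re-deriving the mechanism of Lemma~\ref{streifen}), and then rule out a late excursion by a soft argument: blow-up along first-crossing times $\tau_n\to\infty$, Arzel\`a--Ascoli, passage to the autonomous limit equation at $x=+\infty$, and a Barbalat/energy analysis of the limit, with the crucial input $|\tilde\varphi|\leq\epsilon$ on $(-\infty,0]$ coming from the first-crossing property. Your route avoids the paper's delicate bookkeeping of the constants $\mu$, $\delta$, $\epsilon_0$ at the cost of heavier (but standard) compactness machinery; both are valid. One slip to fix: the intermediate requirement $\beta_{m_0,m_1}(x_1)>B^2$ is unattainable for $m_1=2$, since there $B^2=1=\tfrac{m_1}{2}$ and $\beta_{m_0,m_1}<\tfrac{m_1}{2}$ everywhere. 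Fortunately the only condition you actually use is $\beta_{m_0,m_1}(x_1)\cos^2\eta>\tfrac{B^2}{2}$, and since $\tfrac{m_1}{2}>\tfrac{B^2}{2}$ is equivalent to $4(m_1-1)^2>m_1$, which holds for all $m_1\geq2$, this is achievable; just replace the intermediate claim by $\beta_{m_0,m_1}(x_1)>\tfrac{B^2}{2}$. You should also record explicitly that $\eta\leq\epsilon$ so that the first crossing time $\tau_n$ lies in $(x_1,\infty)$, and note that the degenerate case $W^{r_v}_{m_0,m_1}\equiv0$ (constant solutions) is trivial.
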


\begin{proof}
Since $\lim_{x\rightarrow\infty}\alpha_{m_0,m_1}(x)=m_1-1$ and $\alpha_{m_0,m_1}$ is strictly increasing there exists $x_0\in\R$ such that $\alpha_{m_0,m_1}(x)\geq\tfrac{m_1-1}{2}$ for $x\geq x_0$. Moreover, since $\lim_{x\rightarrow\infty}\beta_{m_0,m_1}(x)=\frac{m_1}{2}$, for each $0<\lambda<1$ there exists $x_1\in\R$ such that $\beta_{m_0,m_1}(x)\geq\lambda\frac{m_1}{2}$ for $x\geq x_1$.
Set $T=\mbox{max}(x_0,x_1,d_{m_0,m_1}^{+})$. Lemma\,\ref{stayclose} implies the existence of $v_0\in\R$ such that
$\rho_v(x)<\epsilon$ for $d^{-}_{m_0,m_1}\leq x\leq T$ and $v\geq v_0$. In particular $\lvert\varphi_v(x)\lvert<\epsilon$ for $v\geq v_0$ and $d^{-}_{m_0,m_1}\leq x\leq T$.

\smallskip

Let $\mu\in(0,1)$ and $\lambda>\mu$ be given.
Since $W_{m_0,m_1}^{r_v}(T)\geq \lambda\frac{m_1}{2}\sin^2r_v(T)$ we can assume that $0<\epsilon<\tfrac{\pi}{2}$ is so small that
$\lvert\varphi_v(T)\lvert<\epsilon$ for $v\geq v_0$ implies $W_{m_0,m_1}^{r_v}(T)\geq \mu\frac{m_1}{2}$ for all $v\geq v_0$. Since $\lim_{x\rightarrow\infty}W_{m_0,m_1}^{r_v}(x)=\frac{m_1}{2}$ and $W_{m_0,m_1}^{r_v}$ increases strictly on the interval $[T,\infty)$, we get 
\begin{align}
\label{wm2}
0\leq W_{m_0,m_1}^{r_v}(x)-W_{m_0,m_1}^{r_v}(T)\leq (1-\mu)\tfrac{m_1}{2}\hspace{0.2cm}\mbox{for all}\hspace{0.2cm}x\geq T, v\geq v_0.
\end{align}
Let $\delta\in\R$ with $\lvert\delta\lvert<\tfrac{\pi}{2}$ be given.
Furthermore, consider a fixed $\mu$ with $$\max(\tfrac{1}{2}(1+\sin^2\delta), 1-\tfrac{1}{4}(m_1-1)(1-\sin^2\delta)\epsilon_0))<\mu<1.$$ 
In what follows we assume $r_v(x_3)=k_0\pi+\delta$ for an $k_0\in\Z$, $x_3\geq T$ and an $v\geq v_0$. 
Hence we obtain $W^{r_v}_{m_0,m_1}(x_3)=\tfrac{1}{2}r_v'(x_3)^2+\beta_{m_0,m_1}(x_3)\sin^2\delta\geq\tfrac{1}{2}m_1\mu$ and thus 
\begin{align*}
\tfrac{d}{dx}W^{r_v}_{m_0,m_1}(x_3)\geq\alpha_{m_0,m_1}(x_3)r_v'(x_3)^2&\geq (m_1-1)\left(\tfrac{1}{2}m_1\mu\!-\!\beta_{m_0,m_1}(x_3)\sin^2\delta\right)\\&\geq\tfrac{1}{4}(m_1-1)m_1(1-\sin^2\delta).
\end{align*}
Lemma\,\ref{bound} and Lemma\,\ref{bound2} imply that
the absolute value of the second derivative of $W^{r_v}_{m_0,m_1}$ is bounded.
Consequently, there exists an $\epsilon_0>0$ which depends only on $\delta$, $m_0$ and $m_1$ such that $\tfrac{d}{dx}W^{r_v}_{m_0,m_1}(x)\geq\tfrac{1}{8}(m_1-1)m_1(1-\sin^2\delta)$ for $x\in[x_2,x_2+\epsilon_0]$. Thus
$$W^{r_v}_{m_0,m_1}(x_3+\epsilon_0)-W^{r_v}_{m_0,m_1}(T)\geq W^{r_v}_{m_0,m_1}(x_3+\epsilon_0)-W^{r_v}_{m_0,m_1}(x_3)\geq\tfrac{m_1}{8}(m_1-1)(1-\sin^2\delta)\epsilon_0.$$
On the other hand inequality (\ref{wm2}) implies
\begin{align*}
W_{m_0,m_1}^{r_v}(x_3+\epsilon)-W_{m_0,m_1}^{r_v}(T)\leq (1-\mu)\tfrac{m_1}{2}<\tfrac{m_1}{8}(m_1-1)(1-\sin^2\delta)\epsilon_0.
\end{align*}
Hence a point $x_3\in\R$ with the properties stated above cannot exist.

\smallskip

Since $\lvert \varphi_v(x)\lvert<\epsilon$ for $v\geq v_0$ and $d^{-}_{m_0,m_1}\leq x\leq T$ we have: for $k_0=0$ and $\delta=\tfrac{\pi}{2}-\epsilon$, there exists 
an $v_0^{1}$ such that $r_v(x)>\tfrac{\pi}{2}-\epsilon$ for all $v\geq v_0^{1}$ and $x\geq T$.
For $k_0=1$ and $\delta=-(\tfrac{\pi}{2}-\epsilon)$, there exists 
an $v_0^{2}$ such that $r_v(x)<\tfrac{\pi}{2}+\epsilon$ for all $v\geq v_0^{2}$ and $x\geq T$.
Thus $\lvert \varphi_v(x)\lvert<\epsilon$ for all $v\geq v_1:=\max(v_0^1,v_0^{2})$ and $x\geq T$.

\smallskip

Let $0<\lambda\leq\tfrac{1}{2}$. By applying the preceding considerations to $\lambda\epsilon$ instead of $\epsilon$, there exist $T\in\R$ and $v_1\in\R$ such that $\lvert r_v(x)-\tfrac{\pi}{2}\lvert<\lambda\epsilon$ for all $v\geq v_1$ and $x\geq T$. We may assume that $\lambda$ and $T$ are chosen such that $(m_1-2\beta_{m_0,m_1}(x)\sin^2r_v(x))^{\frac{1}{2}}<\tfrac{\epsilon}{2}$ 
for $v\geq v_1$, $x\geq T$. Since $W_{m_0,m_1}^{r_v}$ is increasing on the interval $[T,\infty)$ and $\lim_{x\rightarrow\infty}W_{m_0,m_1}^{r_v}(x)=\tfrac{m_1}{2}$ for any solution $r_v$ of the $(m_0,m_1)$-BVP, this implies $\lvert r_v'(x)\lvert<\tfrac{\epsilon}{2}$ for $v\geq v_1$, $x\geq T$.
Consequently, $\rho_v(x)<\epsilon$ for $v\geq v_1$, $x\geq T$. Combining this result with Lemma\,\ref{stayclose} establishes the claim.
\end{proof}

Below we apply the above theorem in order to prove that for $2\leq m\leq 5$ there exists an infinite family of harmonic self-maps of $\Sph^{2m+1}$ with nodal number zero
and thereby establish Theorem\,B.

\begin{theorem}
\label{nz}
Let $m=m_0=m_1$.
For $2\leq m\leq 5$ there exist an infinite family of harmonic self-maps of $\Sph^{2m+1}$ with nodal number zero. 
\end{theorem}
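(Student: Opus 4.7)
The plan is to produce, from each large-nodal-number BVP solution supplied by Theorem~A, a new BVP solution of nodal number zero via the reflection symmetry of the $(m,m)$-ODE, and to check that different $k$ yield pairwise distinct such solutions.

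First I would record the relevant symmetry: for $m_0=m_1=m$, a direct substitution shows that the map $r(t)\mapsto \pi/2 - r(\pi/2 - t)$ sends solutions of the $(m,m)$-ODE to solutions, and in particular sends any BVP solution with $r(\pi/2)=\pi/2$ (that is, $\ell=0$) to another BVP solution. By Theorem~A there exists, for each $k\in\N$, a solution $r_{v_k}$ of the $(m,m)$-BVP with nodal number $k$; the shooting parameters must satisfy $v_k\to\infty$, since otherwise continuous dependence on $v$ together with Lemma~\ref{asy} would yield a limit solution with infinite nodal number. Theorem~\ref{grossev} then makes $|r_{v_k}(x)-\pi/2|$ arbitrarily small on $[x_0,\infty)$ for $k$ large, and because the boundary value $r_{v_k}(\pi/2)=(2\ell_k+1)\pi/2$ is discrete, this forces $\ell_k=0$ for all sufficiently large $k$. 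The reflection $s_k(t):=\pi/2-r_{v_k}(\pi/2-t)$ is therefore itself a solution of the $(m,m)$-BVP.

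Next I would count the nodal number of $s_k$. Since $s_k(t)=\pi/2$ if and only if $r_{v_k}(\pi/2-t)=0$, it equals the number of interior zeros of $r_{v_k}$ on $(0,\pi/2)$, and the key step is to prove this is zero for $k$ large. I would use the scaling limit $\varphi_{v_k}(x-\log v_k)\to\psi(x)$ from the proof of Lemma~\ref{bigv}, where $\psi$ solves $\psi''+(m-1)\psi'+\tfrac{m}{2}\sin 2\psi=0$ with $\psi(-\infty)=-\pi/2$ and $\psi(+\infty)=0$. The Lyapunov functional $E(x):=\tfrac{1}{2}\psi'(x)^2+\tfrac{m}{2}\sin^2\psi(x)$ satisfies $E'=-(m-1)\psi'^2\le 0$ and $E(-\infty)=m/2$; if $\psi(x_0)=-\pi/2$ for a finite $x_0$, then $(\psi,\psi')(x_0)=(-\pi/2,0)$ is a stationary point of the first-order system, forcing $\psi\equiv -\pi/2$ by uniqueness and contradicting $\psi(+\infty)=0$. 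Hence $\psi>-\pi/2$ everywhere; combining uniform convergence on compacta with the left asymptotic $\psi(x)\simeq -\pi/2+e^x$ (to handle $y\ll -\log v_k$) and Theorem~\ref{grossev} (to handle $y\gg -\log v_k$) gives $r_{v_k}>0$ on the interior, so $s_k$ has nodal number zero.

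Finally, since the reflection is an involution on the solution set and the $r_{v_k}$ have pairwise distinct nodal numbers, the $s_k$ are pairwise distinct, yielding the required countably infinite family of BVP solutions with nodal number zero, hence the corresponding harmonic self-maps of $\Sph^{2m+1}$. The step I expect to be the main obstacle is the exclusion of interior zeros of $r_{v_k}$: a priori, in the narrow transition window where $r_{v_k}$ jumps from $0$ to a neighborhood of $\pi/2$, the subsequent oscillations could plunge $r_{v_k}$ back below $0$, and ruling this out rigorously requires the Lyapunov argument on the limiting damped-pendulum solution $\psi$ together with uniform control of the convergence $\varphi_{v_k}(\cdot-\log v_k)\to\psi$ on compacta large enough to encompass the entire oscillation region.
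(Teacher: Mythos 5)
Your overall architecture coincides with the paper's: reflect the solutions supplied by Theorem~\ref{infam} through $(\tfrac{\pi}{4},\tfrac{\pi}{4})$, observe that their initial velocities $v_k$ must tend to infinity (the paper does this via the sequence $a_k=\inf\lbrace c\,\lvert\,\frak{N}(r_v)\geq k\ \mbox{whenever}\ v>c\rbrace$ and the finiteness of each nodal number; your compactness argument is a sketch of the same point), and then show that for large $v$ the solution $r_v$ has no interior zeros, so that the reflected solution has nodal number zero. The identification of $\frak{N}(s_k)$ with the number of zeros of $r_{v_k}$ on $(0,\tfrac{\pi}{2})$ and the distinctness of the $s_k$ are both fine.

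The genuine gap is in your execution of the positivity step. In the variable $x=\log\tan t$ you control three regions: $x\geq x_0$ for a \emph{fixed} $x_0$ via Theorem~\ref{grossev}, a compact window $[-\log v_k-M,-\log v_k+M]$ via locally uniform convergence of $\varphi_{v_k}(\cdot-\log v_k)$ to $\psi$, and the far left via the asymptotics of $\psi$. But the window drifts to $-\infty$ as $k\to\infty$, so the intermediate region $(-\log v_k+M,\,x_0)$, of length roughly $\log v_k$, is covered by neither tool: Theorem~\ref{grossev} fixes $x_0$ \emph{before} choosing $v_0$, and no fixed compactum in the rescaled variable reaches $x_0$. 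At the right edge of the window you only know $(\varphi_{v_k},\varphi_{v_k}')\approx(\psi(M),\psi'(M))$, which is small but does not decay in $k$, so a Gronwall estimate as in Lemma~\ref{stayclose} cannot by itself propagate smallness across an interval of length $\log v_k$. The paper closes exactly this region with the monotone Lyapunov function $V^{r}_{m_0,m_1}$ (Lemma~\ref{bigv}): smallness of $V$ at one point of the window persists on all of $(-\infty,d^{-}_{m,m}]$ because $V$ decreases there. For the nodal-number count itself the paper then bypasses the scaling limit entirely: $\lvert s_v\rvert<\epsilon$ on $(-\infty,d^{+}_{m,m}]$ by Theorem~\ref{grossev}, and Lemma~\ref{streifen} confines the BVP solution $s_v$ to the strip $[-\tfrac{\pi}{2},\tfrac{\pi}{2}]$ for all $x\geq d^{+}_{m,m}$, excluding any crossing of $\tfrac{\pi}{2}$. (Equivalently, Theorem~\ref{streifenneg} plus Picard--Lindel\"of already give $r_v>0$ on $(-\infty,d^{-}_{m,m}]$ for every non-constant solution with $v>0$, with no asymptotic analysis.) So the obstacle you flagged at the end is real, and the tools you list do not suffice to overcome it; you need either the monotone Lyapunov function or the confinement lemmas to bridge the growing middle region.
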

\begin{proof}
By Theorem\,\ref{infam} there exists a countably infinite family of solutions of the $(m,m)$-BVP.
If we reflect each member of the infinite family on the point $\left(0,\frac{\pi}{4}\right)$,
we obtain again an infinite family of solutions of the $(m,m)$-BVP. Indeed, if $r$ is a solution of the $(m,m)$-ODE, so are the functions defined by $x\mapsto \frac{(2k+1)\pi}{2}-r(-x)$, $k\in\Z$.

\smallskip

Theorem\,\ref{grossev} implies that for $\epsilon>0$ there exists  an $v_0\in\R$ such that $\lvert r_v(x)-\tfrac{\pi}{2}\lvert<\epsilon$ for all solutions $r_v$ of the $(m,m)$-BVP with $v\geq v_0$ and $x\geq d^{-}_{m,m}$. For a solution $r_v$ of the $(m,m)$-BVP we denote by $s_v$ the solution which we obtain by reflection of $r_v$ on the point $\left(0,\frac{\pi}{4}\right)$.
Hence $\lvert s_v(x)\lvert<\epsilon$ for all solutions $s_v$ of the $(m,m)$-BVP with $v\geq v_0$ and $x\leq -d^{-}_{m,m}=d^{+}_{m,m}$. 
Lemma\,\ref{streifen} implies $\frak{N}(s_v)=0$. 

\smallskip

The claim follows as soon as we know there exists infinitely many solutions $s_v$ of the $(m,m)$-BVP with $v\geq v_0$.
This is an easy consequence of Theorem\,\ref{infam}: set $a_k=\inf\lbrace c\,\lvert\, \frak{N}(r_v)\geq k\,\mbox{whenever}\,v>c\rbrace$ which is well-defined by Lemma\,\ref{nullstellen}. Clearly, $a_k$ is an increasing sequence.
If $A=\lim_{k\rightarrow\infty}a_k<\infty$ then $\frak{N}(r_{v})=\infty$ for $v\geq A$.
However, Lemma\,\ref{streifen} implies that each $r_v$ has finite nodal number. Consequently, $\lim_{k\rightarrow\infty}a_k=\infty$ and thus the proof of Theorem\,\ref{infam}
implies that if $2\leq m_0\leq 5$ for each $v_0\in\R$ there exist infinitely many solutions of the $(m_0,m_1)$-BVP with $v\geq v_0$.
\end{proof}   

\subsection*{Application: infinite families of harmonic self-maps of the special orthogonal group}
By Theorem\,6.2 in \cite{ps} any solution of the $(m,m)$-BVP yields a harmonic self-map of $\SO(m+2)$.
Thus Theorems\,\ref{infam} and \ref{nz} imply the following result, Theorem\,G.

\begin{theorem}
For each of the special orthogonal groups $\SO(4),\SO(5),\SO(6)$ and $\SO(7)$ there exists two infinite families of harmonic self-maps.
\end{theorem}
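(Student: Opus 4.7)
The plan is to assemble this theorem directly from three ingredients that are already available: Theorem\,6.2 of \cite{ps}, Theorem\,A, and Theorem\,B. The bridge from the boundary value problem to representation theory is Theorem\,6.2 of \cite{ps}, which guarantees that every solution of the $(m,m)$-BVP produces a harmonic self-map of $\SO(m+2)$. Thus to obtain two infinite families of harmonic self-maps of each of $\SO(4),\SO(5),\SO(6),\SO(7)$, it suffices to exhibit, for each $m\in\{2,3,4,5\}$, two disjoint infinite families of solutions of the $(m,m)$-BVP.

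First I would apply Theorem\,A with $m_0=m_1=m$ for $m\in\{2,3,4,5\}$. This immediately yields, for each such $m$, a solution $r^{(k)}$ of the $(m,m)$-BVP with nodal number $k$, for every $k\in\N$. Since the nodal number is an invariant of the solution, the maps $r^{(k)}$ are pairwise distinct, and under Theorem\,6.2 of \cite{ps} they yield an infinite family $\mathcal{F}_1^m$ of pairwise distinct harmonic self-maps of $\SO(m+2)$.

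Next I would invoke Theorem\,B, which in the same range $2\leq m\leq 5$ produces a countably infinite family of solutions of the $(m,m)$-BVP, all of which have nodal number $0$. Again passing through Theorem\,6.2 of \cite{ps}, this gives a second infinite family $\mathcal{F}_2^m$ of harmonic self-maps of $\SO(m+2)$. The two families $\mathcal{F}_1^m$ and $\mathcal{F}_2^m$ are genuinely different: all but one member of $\mathcal{F}_1^m$ has strictly positive nodal number (indeed the nodal numbers in $\mathcal{F}_1^m$ are unbounded), whereas every member of $\mathcal{F}_2^m$ has nodal number zero. Hence for each $m\in\{2,3,4,5\}$, corresponding exactly to the groups $\SO(4),\SO(5),\SO(6),\SO(7)$, we obtain two infinite families of harmonic self-maps, which is the claim.

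There is no real obstacle here once Theorems\,A and B and the passage in \cite{ps} are granted; the only point worth being careful about is confirming that the correspondence $r\mapsto\psi_r$ of \cite{ps} is injective enough on the nose, or at least that solutions with different nodal numbers give non-equivalent harmonic self-maps, so that the two families really are infinite and distinct. This is handled by the nodal number, which is a well-defined invariant of $\psi_r$ read off from $r$.
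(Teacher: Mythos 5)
Your proposal is correct and follows essentially the same route as the paper: combine Theorem\,6.2 of \cite{ps} with Theorems\,A and B for $m_0=m_1=m$, $2\leq m\leq 5$, and distinguish the two families by the nodal number, noting that the family from Theorem\,A contains at most one solution with nodal number zero while every member of the family from Theorem\,B has nodal number zero.
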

\begin{proof}
While the solutions constructed in Theorems\,\ref{nz} all have nodal number $0$, only one member of the infinite family constructed in Theorem\,\ref{infam} has nodal number $0$.
Thus there are two families of harmonic self-maps of $\SO(m)$, $4\leq m\leq 7$, which have at most one element in common.
\end{proof}

\section{Restrictions on the Brouwer degree}
\label{sec5}
In the first subsection we prove that the Brouwer degree of each solution $r$ of the $(m_0,m_1)$-BVP with $m_0\geq 2$ is either $\pm 1$ or $\pm 3$.
In the second subsection we show that the Brouwer degree of $r_v$ is given by $\pm 1$ if we chose $v$ "sufficiently large", i.e., for all $m_0,m_1\in\N$ with $m_0\leq m_1$ there exists a velocity $v_0$ such that the Brouwer degree of 
each solution $r_v$ of the $(m_0,m_1)$-BVP with $v\geq v_0$ is given by $\pm 1$. 
Throughout this section we assume that $r$ satisfies the $(m_0,m_1)$-ODE. 

\subsection{Possible Brouwer degrees of the solutions $r$ of the $(m_0,m_1)$-BVP}
\label{brouwerdegree}
\label{leq}
The next lemma provides several estimates which we use in the proof of Theorem\,E. 
Introduce the abbreviations $R_{m_0,m_1}=d_{m_0,m_1}^{+}-Z_{m_0,m_1}^{\alpha}$ and $L_{m_0,m_1}=Z_{m_0,m_1}^{\alpha}-d_{m_0,m_1}^{-}$.

\begin{lemma}
\label{absch1}
For $m_0\geq 2$ we have
\renewcommand{\labelenumi}{(\alph{enumi})}
\begin{enumerate}
	\item $R_{m_0,m_1}\leq \mbox{artanh}(\tfrac{5}{8m_0-3})$ for all $m_1\geq\max\left(m_0,4\right)$,
	\item $\sqrt{m_0}\,L_{m_0,m_1}\leq\sqrt{m_0}\,\mbox{artanh}(\tfrac{1}{3m_0-4})<\frac{\pi}{2}$ for all $m_1\geq m_0$,
	\item $L_{m_0,m_1}\geq \mbox{artanh}(\tfrac{1+\sqrt{17}}{16m_0-(17+\sqrt{17})})$ for all $m_1\geq 3m_0-4$.
\end{enumerate}	
\end{lemma}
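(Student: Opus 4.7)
The plan is to reduce each of the three estimates to an elementary algebraic inequality in $m_0$ and $m_1$ by obtaining closed-form expressions for $Z_{m_0,m_1}^\alpha$, $d_{m_0,m_1}^+$ and $d_{m_0,m_1}^-$ as $\mbox{artanh}$ of explicit rational functions of $m_0,m_1$, and then applying the subtraction formula $\mbox{artanh}(a)-\mbox{artanh}(b)=\mbox{artanh}\bigl(\tfrac{a-b}{1-ab}\bigr)$ to get closed-form rational expressions for $\tanh R_{m_0,m_1}$ and $\tanh L_{m_0,m_1}$. Solving the defining equations of Section~\ref{sec2} directly gives $\tanh Z_{m_0,m_1}^\alpha=(m_0-m_1)/(m_0+m_1-2)$ and $\tanh d_{m_0,m_1}^+=(2B^2+m_0-m_1)/(m_0+m_1)$ with $B=m_1/(2(m_1-1))$. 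For $d_{m_0,m_1}^-$, recall from Theorem~\ref{streifenneg} that $d_{m_0,m_1}^-=-C_{m_0,m_1}$ where $C_{m_0,m_1}$ satisfies $2\beta_{m_1,m_0}=q_{m_1,m_0}^2$; rewriting this as $\beta_{m_1,m_0}(2\alpha_{m_1,m_0}^2-\beta_{m_1,m_0})=0$ and discarding the spurious root $\beta_{m_1,m_0}=0$ leaves a quadratic in $T=\tanh x$ with discriminant $\Delta(m_0,m_1)=(m_0+m_1)^2-16(m_0+m_1-2)(m_0-m_1)$, whose larger root $T_+$ gives $d_{m_0,m_1}^-=-\mbox{artanh}(T_+)$.

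For part (a), I would show that the closed-form expression for $\tanh R_{m_0,m_1}$ (obtained by feeding the formulas above into the subtraction formula) is increasing in $m_1$ on $m_1\geq\max(m_0,4)$ with limit $\lim_{m_1\to\infty}\tanh R_{m_0,m_1}=5/(8m_0-3)$; this limit is obtained by expanding the numerator and denominator of $(p-q)/(1-pq)$ to leading order in $1/m_1$, where both $\tanh d_{m_0,m_1}^+$ and $\tanh Z_{m_0,m_1}^\alpha$ approach $-1$ with explicit corrections $(2m_0+\tfrac12)/m_1$ and $(2m_0-2)/m_1$ respectively, producing the ratio $(5/2)/(4m_0-\tfrac32)=5/(8m_0-3)$. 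Hence the bound $\tanh R_{m_0,m_1}\leq 5/(8m_0-3)$ is the limiting value itself; verifying that $\tanh R$ approaches this monotonically from below reduces to a polynomial sign check, and the auxiliary hypothesis $m_1\geq 4$ for $m_0\in\{2,3\}$ is precisely what is needed to avoid a small region in which monotonicity fails (e.g.\ $\tanh R_{2,2}=1/2>5/13$).

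Part (b) splits into two pieces. The first inequality follows from (i) the closed form $\tanh L_{m_0,m_0}=m_0/(4(m_0-1)^2)$ in the equal-multiplicity case, which combined with the algebraic identity $4(m_0-1)^2-m_0(3m_0-4)=(m_0-2)^2\geq 0$ gives $\tanh L_{m_0,m_0}\leq 1/(3m_0-4)$ with equality at $m_0=2$, and (ii) monotonicity of $\tanh L_{m_0,m_1}$ in $m_1$ on $m_1\geq m_0$. The second inequality is a one-variable bound: $\sqrt{m_0}\,\mbox{artanh}(1/(3m_0-4))$ is decreasing in $m_0$ for $m_0\geq 2$ (as seen from the elementary estimate $\mbox{artanh}(y)\leq y/(1-y^2)$), so it suffices to check the base case $\sqrt{2}\,\mbox{artanh}(1/2)\approx 0.777<\pi/2$.

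Part (c) is the most delicate. Expanding the discriminant $\Delta$ as $m_1\to\infty$ yields $\Delta=17\,m_1^2+O(m_1)$, which explains the appearance of $\sqrt{17}$; a careful asymptotic expansion of the closed form for $\tanh L_{m_0,m_1}$ then yields $\lim_{m_1\to\infty}\tanh L_{m_0,m_1}=(1+\sqrt{17})/(16m_0-17-\sqrt{17})$. The stated lower bound would follow from the monotonicity statement that $\tanh L_{m_0,m_1}$ is decreasing in $m_1$ on $m_1\geq 3m_0-4$, so that the asymptotic value serves as a lower bound throughout this range. The hypothesis $m_1\geq 3m_0-4$ is sharp: it fails, for instance, for $m_0=m_1=3$ where $\tanh L_{3,3}=3/16\approx 0.188$ lies strictly below the asymptotic value. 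The main obstacle lies in proving this monotonicity cleanly, since the explicit formula for $\tanh L_{m_0,m_1}$ contains $\sqrt{\Delta(m_0,m_1)}$ with $\Delta$ quadratic in $m_1$; differentiating in $m_1$ and establishing the correct sign reduces to an inequality whose critical factorization pinpoints $m_1=3m_0-4$ as the threshold, and identifying this factorization is the technical heart of the argument.
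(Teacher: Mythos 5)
Your closed forms are correct (I checked $\tanh Z_{m_0,m_1}^{\alpha}=\tfrac{m_0-m_1}{m_0+m_1-2}$, $\tanh d_{m_0,m_1}^{+}=\tfrac{2B^2+m_0-m_1}{m_0+m_1}$, the reduction of $2\beta_{m_1,m_0}=q_{m_1,m_0}^2$ to $2\alpha_{m_1,m_0}^2=\beta_{m_1,m_0}$, and the asymptotics $\tfrac{5/2}{4m_0-3/2}=\tfrac{5}{8m_0-3}$ and $\Delta=17m_1^2+O(m_1)$), and for parts (a) and (c) your route is essentially the paper's: treat $m_1$ as a real variable, establish monotonicity of $R_{m_0,\cdot}$ on $[\max(m_0,4),\infty)$ and of $L_{m_0,\cdot}$ on $[3m_0-4,\infty)$, and compare with the limits as $m_1\to\infty$.

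Part (b), however, has a genuine gap. You bound $\tanh L_{m_0,m_0}=\tfrac{m_0}{4(m_0-1)^2}\leq\tfrac{1}{3m_0-4}$ (correct, via $(m_0-2)^2\geq 0$) and then invoke ``monotonicity of $\tanh L_{m_0,m_1}$ in $m_1$ on $m_1\geq m_0$'' to extend the bound to all $m_1\geq m_0$. But $m_1\mapsto L_{m_0,m_1}$ is \emph{not} monotone on $[m_0,\infty)$ for $m_0\geq 3$: it increases on $[m_0,3m_0-4]$ and decreases on $[3m_0-4,\infty)$, so its maximum is attained at the interior point $m_1=3m_0-4$, where it equals exactly $\mbox{artanh}(\tfrac{1}{3m_0-4})$ --- that is precisely where the constant in (b) comes from, and the value at $m_1=m_0$ lies strictly below it. Your own discussion of (c) already exhibits the failure of monotone decrease: you note $\tanh L_{3,3}=\tfrac{3}{16}$ is strictly below the limit $\tfrac{1+\sqrt{17}}{48-17-\sqrt{17}}$, while $\tanh L_{3,5}=\tfrac{1}{5}$ lies above it, so $L_{3,\cdot}$ first increases and then decreases. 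If instead you meant monotone \emph{increase}, the value at $m_1=m_0$ gives a lower bound, not the upper bound you need. The paper's proof of (b) is exactly the missing unimodality statement: writing $f_{m_0}(x)=\sqrt{m_0}\,L_{m_0,x}$, one shows $f_{m_0}'(x)>0$ for $m_0\leq x<3m_0-4$, $f_{m_0}'(3m_0-4)=0$ and $f_{m_0}'(x)<0$ for $x>3m_0-4$, whence $f_{m_0}(x)\leq f_{m_0}(3m_0-4)=\sqrt{m_0}\,\mbox{artanh}(\tfrac{1}{3m_0-4})$. Since you already need the sign change of $\partial L/\partial m_1$ at $m_1=3m_0-4$ for part (c), the fix costs you nothing: prove the full sign pattern once and read off both (b) and (c) from it. The remaining one-variable estimate $\sqrt{m_0}\,\mbox{artanh}(\tfrac{1}{3m_0-4})<\tfrac{\pi}{2}$ (decreasing in $m_0$, check $m_0=2$) is fine as you have it.
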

\begin{proof} 
Replacing $m_1$ with a real number we can consider $d^+,d^-, Z^{\alpha}$ and thus $R$ and $L$ as real valued functions on $\N\times I$ for a suitable interval $I$, e.g., $Z_{m_0,x}^{\alpha}=\mbox{artanh}(\frac{m_0-x}{m_0+x-2})$ for $x\in\R$ with $x\geq m_0$.

\smallskip

Proof of $(a)$: using the addition theorem for the hyperbolic tangent function we prove that the function $h_{m_0}:\left[m_0,\infty\right)\rightarrow\R$, $x\mapsto R_{m_0,x}$
increases strictly on the interval $[\max\left(m_0,4\right),\infty)$.
Since $\lim_{x\rightarrow\infty}h_{m_0}(x)=\mbox{artanh}(\tfrac{5}{8m_0-3})$ we obtain $R_{m_0,m_1}\leq \mbox{artanh}(\tfrac{5}{8m_0-3})$
for all $m_1\geq\max\left(m_0,4\right)$. 

\smallskip

Proof of $(b)$: for $f_{m_0}:\left[m_0,\infty\right)\rightarrow\R$, $x\mapsto\sqrt{m_0}\,L_{m_0,x}$ we have $f'_{m_0}(x)>0$ for $m_0\leq x<3m_0-4$, $f'_{m_0}(3m_0-4)=0$ and $f'_{m_0}(x)<0$ for $x>3m_0-4$. Hence 
\begin{align*}
f_{m_0}(x)\leq f_{m_0}(3m_0-4)\Leftrightarrow\sqrt{m_0}\,L_{m_0,x}\leq \sqrt{m_0}\,\mbox{artanh}(\tfrac{1}{3m_0-4})\hspace{0.2cm}\mbox{for}\hspace{0.2cm}x\geq m_0.
\end{align*}
The right hand side of this estimate is decreasing in $m_0$ and smaller than $\tfrac{\pi}{2}$ for $m_0=2$
and therefore smaller than $\tfrac{\pi}{2}$ for all $m_0\geq 2$.

\smallskip

Proof of $(c)$: $f'_{m_0}(x)<0$ for $x\geq 3m_0-4$
and $\lim_{x\rightarrow\infty}f_{m_0}(x)=\mbox{artanh}(\tfrac{1+\sqrt{17}}{16m_0-(17+\sqrt{17})})$ yield the claim.
\end{proof}

Next we prove the following extended version of Theorem\,E.

\begin{theorem}
\label{brodeg}
Let $r$ be a solution of the $(m_0,m_1)$-BVP with $m_0\geq 2$. Then there exists $\ell_0\in\left\{-1,0,1\right\}$ such that
$(2\ell_0-1)\frac{\pi}{2}\leq r(x)\leq (2\ell_0+1)\frac{\pi}{2}$ for all $x\geq d_{m_0,m_1}^{+}$.
Furthermore, $\lim_{x\rightarrow\infty}r(x)=\pm\tfrac{\pi}{2}$ or $\lim_{x\rightarrow\infty}r(x)=\pm\tfrac{3\pi}{2}$ and the Brouwer degree of the self-map $\psi_r$ of $\Sph^{m_0+m_1+1}$ is $\pm 1$ or $\pm 3$.
\end{theorem}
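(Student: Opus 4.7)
By the strip theorem (Lemma \ref{streifen}), the BVP solution $r$ satisfies $(2k+1)\tfrac{\pi}{2}\leq r(x)\leq(2k+3)\tfrac{\pi}{2}$ for $x\geq d_{m_0,m_1}^{+}$ and some $k\in\mathbb{Z}$, with $\lim_{x\to\infty}r(x)$ equal to one of the two endpoints. Setting $\ell_0:=k+1$, the conclusions of the theorem reduce to $k\in\{-2,-1,0\}$: the limit $\lim r$ then lies in $\{\pm\tfrac{\pi}{2},\pm\tfrac{3\pi}{2}\}$, and the corresponding $\ell\in\{-2,-1,0,1\}$ (so $|2\ell+1|\leq 3$) yields $|\deg(\psi_r)|\in\{1,3\}$ via the Brouwer-degree formula of the introduction. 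Since $r\mapsto -r$ maps BVP solutions to BVP solutions, it suffices to prove the one-sided bound $r(d_{m_0,m_1}^{+})\leq\tfrac{3\pi}{2}$, which together with Lemma \ref{streifen} rules out $k\geq 1$.

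I would propagate an upper bound on $r$ through three regions. On $(-\infty,d_{m_0,m_1}^{-}]$ Theorem \ref{streifenneg} already gives $r(x)\leq\pi$. On $[d_{m_0,m_1}^{-},Z_{m_0,m_1}^{\alpha}]$ the pointwise bound $|r'|\leq\sqrt{m_0}$ from Lemma \ref{bound2}, together with $\sqrt{m_0}\,L_{m_0,m_1}<\tfrac{\pi}{2}$ from Lemma \ref{absch1}(b), yields $r(Z_{m_0,m_1}^{\alpha})<r(d_{m_0,m_1}^{-})+\tfrac{\pi}{2}\leq\tfrac{3\pi}{2}$. On the remaining interval $[Z_{m_0,m_1}^{\alpha},d_{m_0,m_1}^{+}]$ I would combine two bounds on $|r'|$ of rather different nature. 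On one hand, the monotonicity of $W^r$ with $\lim_\infty W^r=\tfrac{m_1}{2}$ (Lemmas \ref{increase}, \ref{bound}) together with $|\beta_{m_0,m_1}|\leq\tfrac{1}{2}$ on this interval gives $|r'|\leq\sqrt{m_1+1}$. On the other hand, the contrapositive of Lemma \ref{bounded23} applied to a BVP solution — which does not diverge — provides the much sharper $|r'|\leq B:=\tfrac{m_1}{2(m_1-1)}\leq 1$ on $[c_{m_0,m_1},\infty)$. Splitting at $c_{m_0,m_1}$, the right subinterval contributes at most $B\cdot R_{m_0,m_1}\leq\mbox{artanh}(\tfrac{5}{8m_0-3})$ by Lemma \ref{absch1}(a), while on $[Z_{m_0,m_1}^{\alpha},c_{m_0,m_1}]$ the initial condition $|r'(Z_{m_0,m_1}^{\alpha})|\leq\sqrt{m_0}$ (from $V^r$) propagated by a short-interval Gronwall estimate based on $|\beta|\leq\tfrac{1}{2}$ keeps the contribution of order $\sqrt{m_0}(c_{m_0,m_1}-Z_{m_0,m_1}^{\alpha})$, a quantity explicitly computable from $q_{m_0,m_1}(c_{m_0,m_1})=-B$. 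Summing the three contributions yields $r(d_{m_0,m_1}^{+})\leq\tfrac{3\pi}{2}$.

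The hard part is the quantitative estimate on $[Z_{m_0,m_1}^{\alpha},d_{m_0,m_1}^{+}]$: neither single bound on $|r'|$ is strong enough, as $\sqrt{m_1+1}$ diverges with $m_1$ while the sharper $B$-bound only applies past $c_{m_0,m_1}$. The argument therefore uses both bounds on the correct subintervals, with Lemma \ref{absch1}(a) controlling the right piece and Lemma \ref{absch1}(c) ensuring that the length $c_{m_0,m_1}-Z_{m_0,m_1}^{\alpha}$ is compatible with the margin left after the middle-left estimate. Once the strip bound $r(d_{m_0,m_1}^{+})\leq\tfrac{3\pi}{2}$ is established, the remaining assertions of the theorem follow immediately from Lemma \ref{streifen}(i) and the reduction above.
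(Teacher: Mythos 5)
Your overall skeleton (reduce by the symmetry $r\mapsto -r$ to the one-sided bound $r(d_{m_0,m_1}^{+})\leq\tfrac{3\pi}{2}$, then propagate $r\leq\pi$ from Theorem \ref{streifenneg} through $[d_{m_0,m_1}^{-},Z_{m_0,m_1}^{\alpha}]$ via $\lvert r'\rvert\leq\sqrt{m_0}$ and Lemma \ref{absch1}(b)) is exactly the paper's. Where you genuinely diverge is on $[Z_{m_0,m_1}^{\alpha},d_{m_0,m_1}^{+}]$. The paper does \emph{not} use the contrapositive of Lemma \ref{bounded23}; it first records the crude bound $\lvert r(d_{m_0,m_1}^{+})\rvert\leq\pi+\sqrt{m_0}\,L_{m_0,m_1}+\sqrt{m_1+1}\,(Z_{m_0,m_1}^{\beta}-Z_{m_0,m_1}^{\alpha})+\sqrt{m_1}\,(d_{m_0,m_1}^{+}-Z_{m_0,m_1}^{\beta})$, which only works for $m_1\leq m_1^{max}(m_0)$ (Table \ref{tabelle1}), and for larger $m_1$ runs a contradiction argument: assuming $r$ reaches $\tfrac{3\pi}{2}$ inside the interval, it integrates the ODE twice while retaining the restoring term $-\int\beta_{m_0,m_1}\sin 2r$, with separate and rather delicate treatments of $m_0\geq 3$ and $m_0=2$. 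Your replacement — a BVP solution has a finite limit, so by Lemma \ref{bounded23} it must satisfy $\lvert r'\rvert\leq B\leq 1$ on $(c_{m_0,m_1},\infty)$ — is a legitimate reading of that lemma and is an attractive idea: it converts the non-divergence hypothesis into a derivative bound that is uniform in $m_1$, which is precisely what the naive $\sqrt{m_1+1}$ bound fails to provide. My spot checks of the resulting arithmetic (e.g.\ $m_0=2$ with $m_1=2,3,5$ and $m_1\to\infty$) do close below $\tfrac{\pi}{2}$ with some room, so the route looks viable and, if completed, would be shorter than the paper's.

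That said, the proposal stops exactly where the work is. First, the entire content of the theorem is a quantitative inequality, and "summing the three contributions yields $r(d_{m_0,m_1}^{+})\leq\tfrac{3\pi}{2}$" is asserted, not verified; you must check that $\sqrt{m_0}\,L_{m_0,m_1}+(\text{contribution on }[Z_{m_0,m_1}^{\alpha},c_{m_0,m_1}])+B\,(d_{m_0,m_1}^{+}-c_{m_0,m_1})\leq\tfrac{\pi}{2}$ uniformly over all admissible $(m_0,m_1)$, including the small cases $(2,2),(2,3),(3,3)$ where Lemma \ref{absch1}(a) does not apply and $R_{m_0,m_1}$ must be computed directly. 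Second, your appeal to Lemma \ref{absch1}(c) is misplaced: that item is a \emph{lower} bound on $L_{m_0,m_1}$ and says nothing about $c_{m_0,m_1}-Z_{m_0,m_1}^{\alpha}$; the paper contains no estimate for that length, so you would have to supply one (the inclusion $c_{m_0,m_1}\in[Z_{m_0,m_1}^{\alpha},d_{m_0,m_1}^{+}]$ together with \ref{absch1}(a) suffices where the latter applies). Third, on $[Z_{m_0,m_1}^{\alpha},c_{m_0,m_1}]$ neither the $\sqrt{m_0}$ bound of Lemma \ref{bound2} (valid only for $x\leq Z_{m_0,m_1}^{\alpha}$) nor the $B$ bound applies, so the Gronwall step is essential and its constant — governed by $\alpha_{m_0,m_1}(d_{m_0,m_1}^{+})\leq\tfrac{5}{4}$ and $\lvert\beta_{m_0,m_1}\rvert\leq\tfrac{1}{2}$ — must actually be tracked; "of order $\sqrt{m_0}(c_{m_0,m_1}-Z_{m_0,m_1}^{\alpha})$" hides a multiplicative factor that eats into a margin of only a few tenths. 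None of these is a conceptual obstruction, but until they are carried out the proof is not complete.
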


The strategy of the proof is as follows (considering the picture at the beginning of Section\,\ref{sec2} helps to understand the idea):
\begin{itemize}
\item By Theorem\,\ref{streifenneg} there exists a constant $d_{m_0,m_1}^{-}\in\R$ such that either $0\leq r(x)\leq\pi$ or $-\pi\leq r(x)\leq 0$ for all $x\leq d_{m_0,m_1}^{-}$. 
\item By Lemma\,\ref{streifen} there exists an integer $\ell_0\in\Z$ such that $(2\ell_0-1)\frac{\pi}{2}\leq r(x)\leq (2\ell_0+1)\frac{\pi}{2}$ for all $x\geq d_{m_0,m_1}^{+}$.
\item Since the first derivative of $r$ is bounded we find
$\lvert r(d_{m_0,m_1}^{-})-r(d_{m_0,m_1}^{+})\lvert\leq\tfrac{\pi}{2}$, which implies $\ell_0\in\left\{-1,0,1\right\}$. By Subsection\,\ref{brouwer} the Brouwer degree of $r$ can thus only attain the values $\pm 1$ or $\pm 3$. 
\end{itemize}

\begin{proof}
By Theorem\,\ref{streifenneg} either $0\leq r(x)\leq\pi$ or $-\pi\leq r(x)\leq 0$ for all $x\leq d_{m_0,m_1}^{-}$.
Furthermore, Lemma\,\ref{bound} and Lemma\,\ref{bound2} yield
\begin{align*}
\lvert r(d_{m_0,m_1}^{+})\lvert \leq\pi+\sqrt{m_0}\,L_{m_0,m_1}+\sqrt{m_1+1}\,(Z_{m_0,m_1}^{\beta}-Z_{m_0,m_1}^{\alpha})+\sqrt{m_1}\,(d_{m_0,m_1}^{+}-Z_{m_0,m_1}^{\beta}).
\end{align*}
For each $2\leq m_0\leq 5$ let $m_{1}^{max}\in\N$ be such that
$\lvert r(d_{m_0,m_1}^{+})\lvert \leq\frac{3\pi}{2}$ for all $m_1$ with $m_0\leq m_1\leq m_{1}^{max}$. The following table gives $m_1^{max}$ for $2\leq m_0\leq 5$. 
\begin{table}[h]
\begin{center}
\begin{tabular}{|c||c|c|c|c|}
\hline
$m_0$&2&3&4&5\\ \hline
$m_{1}^{max}$&4&27&60&106 \\ \hline
\end{tabular}\\
\end{center}
\caption{$m_1^{max}$ for the cases $2\leq m_0\leq 5$}
\label{tabelle1}
\end{table}
Next we sharpen the above estimate for $\lvert r(d_{m_0,m_1}^{+})\lvert$ by improving it on $\I=\left[Z_{m_0,m_1}^{\alpha},d_{m_0,m_1}^{+}\right]$. We may assume $r(Z_{m_0,m_1}^{\alpha})\geq 0$
and $r'(x)\geq 0$ for $x\in\I$, or similarly for $-r$, since otherwise the estimates become even better. Below we assume the first possibility.

\smallskip

The $(m_0,m_1)$-ODE yields $r''(x)\leq \alpha_{m_0,m_1}(d_{m_0,m_1}^{+})r'(x)-\beta_{m_0,m_1}(x)\sin2r(x)$ for all $x\in\I$. By integrating once we thus obtain
\begin{multline}
\label{fd}
r'(x)\leq r'(Z_{m_0,m_1}^{\alpha})+\alpha_{m_0,m_1}(d_{m_0,m_1}^{+})\big(r(x)-r(Z_{m_0,m_1}^{\alpha})\big)\\-\int^x_{\substack{Z_{m_0,m_1}^{\alpha}}}\beta_{m_0,m_1}(\xi)\sin2r(\xi) d\xi.
\end{multline}
Lemma\,\ref{bound2} and Lemma\,\ref{absch1} imply $r(Z_{m_0,m_1}^{\alpha})\leq\pi+\sqrt{m_0}\,L_{m_0,m_1}<\tfrac{3\pi}{2}$. In what follows we assume that there exists an $x_0\in\I$ such that
$r(x_0)=\frac{3\pi}{2}$ and $r(x)<\tfrac{3\pi}{2}$ for all $x\in\I_0=\left[Z_{m_0,m_1}^{\alpha},x_0\right]$. Consequently,
\begin{align*}
r'(x)\leq r'(Z_{m_0,m_1}^{\alpha})+\alpha_{m_0,m_1}(d_{m_0,m_1}^{+})\big(\tfrac{3\pi}{2}-r(Z_{m_0,m_1}^{\alpha})\big)
-\int^{Z_{m_0,m_1}^{\beta}}_{Z_{m_0,m_1}^{\alpha}}\beta_{m_0,m_1}(\xi)d\xi=:A,
\end{align*}
for all $x\in\I_0$.
Thus $r(x)\leq r(Z_{m_0,m_1}^{\alpha})+A(x-Z_{m_0,m_1}^{\alpha})$ for $x\in\I_0$.
Therefore (\ref{fd}) yields
\begin{align*}
r'(x)\leq r'(Z_{m_0,m_1}^{\alpha})+\alpha_{m_0,m_1}(d_{m_0,m_1}^{+})A(x-Z_{m_0,m_1}^{\alpha})-\int^{Z_{m_0,m_1}^{\beta}}_{Z_{m_0,m_1}^{\alpha}}\beta_{m_0,m_1}(\xi)d\xi
\end{align*}
for $x\in\I_0$. By integrating we thus obtain the following inequality for all $x\in\I_0$
\begin{multline}
\label{absch}
r(x)\leq r(Z_{m_0,m_1}^{\alpha})+r'(Z_{m_0,m_1}^{\alpha})(x-Z_{m_0,m_1}^{\alpha})
+\tfrac{1}{2}\alpha_{m_0,m_1}(d_{m_0,m_1}^{+})A(x-{Z_{m_0,m_1}^{\alpha}})^2 \\-(x-Z_{m_0,m_1}^{\alpha})\int^{Z_{m_0,m_1}^{\beta}}_{Z_{m_0,m_1}^{\alpha}}\beta_{m_0,m_1}(\xi)d\xi.
\end{multline}
In what follows we show that the right hand side of (\ref{absch}) is smaller than $\frac{3\pi}{2}$ for all $x\in\I$, which contradicts the existence  of $x_0$: the both inequalities $-\tfrac{1}{2}\leq\beta_{m_0,m_1}(Z_{m_0,m_1}^{\alpha})$, $\beta_{m_0,m_1}(d_{m_0,m_1}^{+})\leq\tfrac{1}{2}$ and the fact that $\beta$ increases strictly imply $\lvert\beta_{m_0,m_1}(x)\lvert\leq\tfrac{1}{2}$
for $x\in\I$. Thus
\begin{align*}
-\int^{Z_{m_0,m_1}^{\beta}}_{Z_{m_0,m_1}^{\alpha}}\beta_{m_0,m_1}(\xi)d\xi\leq\tfrac{1}{2}\,R_{m_0,m_1}.
\end{align*}
Since the right hand side of (\ref{absch}) is strictly increasing in $x$ it is sufficient to prove $r(d_{m_0,m_1}^{+})<\tfrac{3\pi}{2}$.
Using $\alpha_{m_0,m_1}(d_{m_0,m_1}^{+})\leq\tfrac{5}{4}$ and $A\geq 0$, inequality (\ref{absch}) implies
\begin{align*}
r(d_{m_0,m_1}^{+})\leq r(Z_{m_0,m_1}^{\alpha})\!+\!\sqrt{m_0}\,R_{m_0,m_1}\!+\!\tfrac{1}{2}\,R_{m_0,m_1}^2(1\!+\!\tfrac{5}{4}A).
\end{align*}
By Lemma\,\ref{bound2} and the above considerations we have
\begin{align}
\label{a}
A\leq \sqrt{m_0}+\tfrac{5}{4}(\tfrac{3\pi}{2}-r(Z_{m_0,m_1}^{\alpha}))
+\tfrac{1}{2}\,R_{m_0,m_1}.
\end{align}
Combining the two preceding estimates we get
\begin{multline}
\label{combi}
r(d_{m_0,m_1}^{+})\leq\sqrt{m_0}\,R_{m_0,m_1}\!+\!r(Z_{m_0,m_1}^{\alpha})(1-\tfrac{5^2}{2^5}R_{m_0,m_1}^2)\\
+\tfrac{1}{2}R_{m_0,m_1}^2(1\!+\!\tfrac{5}{4}\sqrt{m_0}+\tfrac{3\pi5^2}{2^5}+\tfrac{5}{2^3}R_{m_0,m_1}).
\end{multline}
By Lemma\,\ref{absch1} the coefficient of $r(Z_{m_0,m_1}^{\alpha})$ is non-negative for $m_1\geq\max(m_0, 4)$.
Furthermore, Lemma\,\ref{absch1} implies
$r(Z_{m_0,m_1}^{\alpha})\leq\pi+\sqrt{m_0}\,\mbox{artanh}(\tfrac{1}{3m_0-4})<\tfrac{3\pi}{2}$. Therefore
\begin{multline*}
r(d_{m_0,m_1}^{+})\leq\sqrt{m_0}\,R_{m_0,m_1}+\tfrac{1}{2}R_{m_0,m_1}^2\big(1+\tfrac{5}{4}\sqrt{m_0}+\tfrac{\pi5^2}{2^5}\\-\tfrac{5^2}{2^4}\sqrt{m_0}\,\mbox{artanh}(\tfrac{1}{3m_0-4})+\tfrac{5}{2^3}R_{m_0,m_1}\big)+\pi+\sqrt{m_0}\,\mbox{artanh}(\tfrac{1}{3m_0-4}).
\end{multline*}

\smallskip

\setlength{\parindent}{0pt}
\textbf{Case 1}: $m_0\geq 3$. Since in the preceding inequality the expression in the bracket after $R_{m_0,m_1}^2$ is non-negative, Lemma\,\ref{absch1} yields
\begin{multline*}
r(d_{m_0,m_1}^{+})\leq\pi+\sqrt{m_0}\,\mbox{artanh}(\tfrac{23}{24m_0-17})+\tfrac{1}{2}\mbox{artanh}(\tfrac{5}{8m_0-3})^2\big(1+\tfrac{5}{4}\sqrt{m_0}+\tfrac{\pi5^2}{2^5}\\-\tfrac{5^2}{2^4}\sqrt{m_0}\,\mbox{artanh}(\tfrac{1}{3m_0-4})+\tfrac{5}{2^3}\mbox{artanh}(\tfrac{5}{8m_0-3})\big)
\end{multline*}
for $m_1\geq\max(m_0, 4)$ and $m_0\geq 3$, where we also use the addition theorem for $\mbox{artanh}$. 
From $r(d_{3,m_1}^+)<\frac{3\pi}{2}$ and the fact that the right hand side of the preceding inequality is decreasing in $m_0$ 
we get $r(d_{m_0,m_1}^+)<\frac{3\pi}{2}$ for $m_0\geq 3$ and $m_1\geq 4$, which contradicts our assumption. Hence there does not exist a point $x_0\in\I$ with $r(x_0)=\frac{3\pi}{2}$. Similarly, we prove that there cannot exist a point $x_1\in\I$ with $r(x_1)=-\frac{3\pi}{2}$. We thus obtain:
for $m_0\geq 3$ there exists $\ell_0\in\left\{-1,0,1\right\}$ such that
$(2\ell_0-1)\frac{\pi}{2}\leq r(x)\leq (2\ell_0+1)\frac{\pi}{2}$ for all $x\geq d_{m_0,m_1}^{+}$. 
Note that the case $m_0=m_1=3$ is covered by Table\,\ref{tabelle1}.

\smallskip

\setlength{\parindent}{0pt}
\textbf{Case 2}: $m_0=2$.  In what follows we restrict ourselves to $m_1\geq 4$ since Table\,\ref{tabelle1} covers the cases $m_1=2$ and $m_1=3$.

\smallskip

First we assume $r(Z_{2,m_1}^{\alpha})\leq\pi$. By (\ref{a}) we have $A\leq\sqrt{2}+\tfrac{5}{4}(\tfrac{3\pi}{2}-r(Z_{2,m_1}^{\alpha}))+\tfrac{1}{2}R_{2,m_1}$.
Thus $r(d_{2,m_1}^{+})\leq r(Z_{2,m_1}^{\alpha})+A R_{2,m_1}$ yields
\begin{align*}
r(d_{2,m_1}^{+})\leq r(Z_{2,m_1}^{\alpha})+\big(\sqrt{2}+\tfrac{5}{4}(\tfrac{3\pi}{2}-r(Z_{2,m_1}^{\alpha}))
+\tfrac{1}{2}R_{2,m_1}\big)R_{2,m_1}.
\end{align*}
One proves easily that the resulting coefficient of $r(Z_{2,m_1}^{\alpha})$, namely $1-\tfrac{5}{4}R_{2,m_1}$, is non-negative for all $m_1\geq 2$.
Thus we may assume $r(Z_{2,m_1}^{\alpha})=\pi$. Consequently,
\begin{align*}
r(d_{2,m_1}^{+})\leq\pi+\big(\sqrt{2}+\tfrac{5\pi}{2^3}+\tfrac{1}{2}R_{2,m_1}\big)R_{2,m_1}.
\end{align*}
Using part (i) of Lemma\,\ref{absch1} we get
$r(d_{2,m_1}^{+})<\frac{3\pi}{2}$ for all $m_1\geq 4$. 
However, this contradicts our assumption that there exist a point $x_0\in\I$ with $r(x_0)=\frac{3\pi}{2}$.

\smallskip

Next we assume $r(Z_{2,m_1}^{\alpha})\geq\pi$. Since $\lim_{x\rightarrow -\infty}V_{2,m_1}^r(x)=1$ and $V_{2,m_1}^r$ decreases on the interval $(-\infty,Z_{2,m_1}^{\alpha}]$ we have
\begin{align}
\label{arz}
r'(x)^2\leq 2+2\beta_{2,m_1}(x)\cos^2(r(x)),
\end{align}
for all $x\in(-\infty,Z_{2,m_1}^{\alpha}]$. From this we obtain an upper bound for $r'(Z_{2,m_1}^{\alpha})$.\\
We may assume $r(Z_{2,m_1}^{\alpha})=\pi+\sqrt{2} L_{2,m_1}$: suppose that $r(Z_{2,m_1}^{\alpha})$ attains a smaller value $\widetilde{r}(Z_{2,m_1}^{\alpha})$ between
$\pi$ and $\tfrac{3\pi}{2}$, i.e., $r(Z_{2,m_1}^{\alpha})=\widetilde{r}(Z_{2,m_1}^{\alpha})+\Delta r$ with $\pi\leq \widetilde{r}(Z_{2,m_1}^{\alpha})<\tfrac{3\pi}{2}$ and $\Delta r>0$. Since $\beta_{2,m_1}(Z_{2,m_1}^{\alpha})\leq 0$ the upper bound for $r'(Z_{2,m_1}^{\alpha})$ becomes smaller,
while $A$ increases by $\alpha_{2,m_1}(d_{2,m_1}^{+})\Delta r$. If we neglect the fact that the upper bound for $r'(Z_{2,m_1}^{\alpha})$ becomes smaller, the right hand side of inequality (\ref{absch}) changes by
$c:=(\frac{1}{4}\alpha_{2,m_1}^2(d_{2,m_1}^{+})\,R_{2,m_1}^2-1)\Delta r$.
Since $\alpha_{2,m_1}(d_{2,m_1}^{+})\leq\frac{5}{4}$ for all $m_1\geq 2$, the first statement of Lemma\,\ref{absch1} implies
 $c<0$ for $m_1\geq 4$. In other words, in these cases the estimate (\ref{absch}) becomes even better.\\
Plugging $r(Z_{2,m_1}^{\alpha})=\pi+\sqrt{2} L_{2,m_1}$ into (\ref{arz}) and using Lemma\,\ref{absch1} (ii) we obtain the inequality
$\lvert r'(Z_{2,m_1}^{\alpha})\lvert\leq u_{m_1}$, where $u_{m_1}:=\big(2+\tfrac{2-m_1}{m_1}\cos^2(\mbox{artanh}(\tfrac{1}{2})\sqrt{2})\big)^{\frac{1}{2}}$.

\smallskip

We now proceed similar as above, where we use the estimate $\lvert r'(Z_{2,m_1}^{\alpha})\lvert\leq u_{m_1}$ instead of $\lvert r'(Z_{2,m_1}^{\alpha})\lvert\leq\sqrt{2}$.
Consequently, instead of (\ref{combi}) we obtain
\begin{align*}
r(d_{2,m_1}^{+})\leq u_{m_1}R_{2,m_1}\!+\!r(Z_{2,m_1}^{\alpha})(1-\tfrac{5^2}{2^5}R_{2,m_1}^2)+\tfrac{1}{2}R_{2,m_1}^2(1\!+\!\tfrac{5}{4}u_{m_1}+\tfrac{3\pi5^2}{2^5}+\tfrac{5}{2^3}R_{2,m_1}).
\end{align*}
Next we find an upper estimate for $r(Z_{2,m_1}^{\alpha})$: we may assume $r'\geq 0$ on $\I_1=[d_{2,m_1}^{-},Z_{2,m_1}^{\alpha}]$ since otherwise the estimates become even better. From $r'\leq\sqrt{2}$ on $\I_1$ and Theorem\,\ref{streifenneg} we deduce $r(x)\leq\pi+\sqrt{2}(x-d_{2,m_1}^{-})$ for all $x\in\I_1$.
Hence (\ref{arz}) implies
$$r'(x)\leq (2+2\beta_{2,m_1}\cos^2(\sqrt{2}(x-d_{2,m_1}^{-})))^{\tfrac{1}{2}}=:v_{m_1}(x)$$
for all $x\in\I_1$. This result together with Theorem\,\ref{streifenneg} implies
\begin{align*}
r(Z_{2,m_1}^{\alpha})\leq\pi+\int_{d_{2,m_1}^{-}}^{Z_{2,m_1}^{\alpha}}v_{m_1}(x)\,dx=:w_{m_1}.
\end{align*}
Substituting this into the preceding estimate for $r(d_{2,m_1}^{+})$ yields
\begin{align*}
r(d_{2,m_1}^{+})\leq u_{m_1}R_{2,m_1}+w_{m_1}+\tfrac{1}{2}R_{2,m_1}^2(1\!+\!\tfrac{5}{4}u_{m_1}+\tfrac{3\pi5^2}{2^5}+\tfrac{5}{2^3}R_{2,m_1}-\tfrac{5^2}{2^4}w_{m_1}).
\end{align*}
Since the expression in the bracket after $R_{2,m_1}^2$ is positive for all $m_1\geq 2$ we can apply Lemma\,\ref{absch1} and thus obtain
\begin{multline*}
r(d_{2,m_1}^{+})\leq u_{m_1}\mbox{artanh}(\tfrac{5}{13})+w_{m_1}\\+\tfrac{1}{2}\mbox{artanh}(\tfrac{5}{13})^2(1\!+\!\tfrac{5}{4}u_{m_1}+\tfrac{3\pi5^2}{2^5}+\tfrac{5}{2^3}\mbox{artanh}(\tfrac{5}{13})-\tfrac{5^2}{2^4}w_{m_1}.
\end{multline*}
For $m_1=3$ the right hand side of this inequality is smaller than $\tfrac{3\pi}{2}$. Furthermore, it is decreasing in $m_1$.
Consequently, we get $r(d_{2,m_1}^{+})<\tfrac{3\pi}{2}$ for $m_1\geq 3$. 
However, this contradicts our assumption that there exist a point $x_0\in\I$ with $r(x_0)=\frac{3\pi}{2}$.

\smallskip

Since the case $m_0=m_1=2$ is covered by Table\,\ref{tabelle1} we thus obtain: for $m_0=2$ and each $m_1\geq 2$ there exists an integer $\ell_0\in\left\{-1,0,1\right\}$ such that
$(2\ell_0-1)\frac{\pi}{2}\leq r(x)\leq (2\ell_0+1)\frac{\pi}{2}$ for all $x\geq d_{m_0,m_1}^{+}$.
\end{proof}

\begin{remark}
The result of Theorem\,\ref{brodeg} is optimal in the sense that numerical results show that for each $\ell_0\in\left\{-1,0,1\right\}$ there exist solutions 
of the $(m_0,m_1)$-ODE with $\lim_{x\rightarrow -\infty}r(x)=0$ and $(2\ell_0-1)\frac{\pi}{2}\leq r(x)\leq (2\ell_0+1)\frac{\pi}{2}$
 for $x\geq d_{m_0,m_1}^{+}$. However, we only found solutions with $\lim_{x\rightarrow\infty}r(x)=\pm\frac{\pi}{2}$, i.e.,
 solutions with Brouwer degree $\pm 1$.  
\end{remark}

\begin{question*}
Do all solutions of the $(m_0,m_1)$-BVP have Brouwer degree $\pm 1$?
\end{question*}

\subsection{Brouwer degree for large initial velocities}
In what follows we show that the Brouwer degree of $r_v$ is given by $\pm 1$ if we chose $v$ "sufficiently large" and thereby establish Theorem\,F.

\begin{theorem}
\label{bigvel}
For $m_1\geq 2$ there exists $v_0>0$ such that each solution $r_v$ of the $(m_0,m_1)$-BVP with $v\geq v_0$ has Brouwer degree $\pm 1$.
\end{theorem}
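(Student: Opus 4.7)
The plan is to combine the extended version of Theorem~E, namely Theorem~\ref{brodeg}, with Theorem~D (Theorem~\ref{grossev}) to pin down the limit $\lim_{x\to\infty} r_v(x)$ for sufficiently large initial velocity $v$.

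By Theorem~\ref{brodeg}, any solution $r$ of the $(m_0,m_1)$-BVP satisfies $\lim_{x\to\infty} r(x) \in \{\pm\tfrac{\pi}{2},\pm\tfrac{3\pi}{2}\}$, corresponding to $\ell\in\{-2,-1,0,1\}$ in the relation $r(\tfrac{\pi}{2})=(2\ell+1)\tfrac{\pi}{2}$. Inspecting the Brouwer degree formula displayed at the start of Section~\ref{sec5} (reproduced from \cite{puttmann}), degree $\pm 3$ can arise only when $|\ell|=1$. Hence it suffices to show that for all sufficiently large $v$ the limit must be $\tfrac{\pi}{2}$, i.e.\ $\ell=0$.

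To achieve this, I would apply Theorem~\ref{grossev} with $\epsilon=\tfrac{\pi}{4}$ and a fixed choice such as $x_0=0$. This produces a velocity $v_0\in\R$ with the property that for every $v\ge v_0$ and every $x\ge x_0$ one has
$$|r_v(x)-\tfrac{\pi}{2}|\le\rho_v(x)<\tfrac{\pi}{4}.$$
Consequently, $r_v(x)\in(\tfrac{\pi}{4},\tfrac{3\pi}{4})$ throughout $[x_0,\infty)$, and passing to the limit as $x\to\infty$ yields $\lim_{x\to\infty}r_v(x)\in[\tfrac{\pi}{4},\tfrac{3\pi}{4}]$. Intersecting with the quantized set $\{\pm\tfrac{\pi}{2},\pm\tfrac{3\pi}{2}\}$ forces $\lim_{x\to\infty}r_v(x)=\tfrac{\pi}{2}$, so $\ell=0$.

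Substituting $\ell=0$ back into the degree formula yields $\deg(\psi_{r_v})=+1$ irrespective of the parities of $m_0$ and $m_1$, establishing the claim. The argument is short precisely because the heavy lifting is already in the two invoked theorems; the one delicate point to highlight is that Theorem~\ref{grossev} provides control uniformly on the half-line $[x_0,\infty)$ rather than only on compact subsets, and this uniformity is exactly what permits one to transfer the smallness of $\varphi_v$ into a statement about the asymptotic value at $+\infty$. Had one only the compact-interval version of Theorem~D, one could not a priori rule out that a solution $r_v$ climbs out of the stripe $(\tfrac{\pi}{4},\tfrac{3\pi}{4})$ at some very large $x$ and eventually approaches $\pm\tfrac{3\pi}{2}$.
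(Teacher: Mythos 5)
Your argument is correct, but it takes a genuinely different (and shorter) route than the paper. You invoke Theorem~\ref{grossev} as a black box: its uniform control $\rho_v(x)<\epsilon$ on the whole half-line $[x_0,\infty)$ immediately traps $\lim_{x\to\infty}r_v(x)$ in $[\tfrac{\pi}{2}-\epsilon,\tfrac{\pi}{2}+\epsilon]$, and since this limit is quantized to $(2\ell+1)\tfrac{\pi}{2}$ (already by the definition of the BVP, so your appeal to Theorem~\ref{brodeg} is not even needed), you get $\ell=0$ and degree $+1$. The paper instead uses only the compact-interval control of Lemma~\ref{stayclose} together with Lemma~\ref{streifen} to reduce to the three possible limits $\tfrac{3\pi}{2},-\tfrac{\pi}{2},\tfrac{\pi}{2}$, and then excludes the first two by a Lyapunov-function estimate: if $r_v$ reached $k_0\pi$ at some $x_2\ge T$, then $W^{r_v}_{m_0,m_1}$ would have to jump by a definite amount on $[x_2,x_3]$, contradicting $W^{r_v}_{m_0,m_1}(x)-W^{r_v}_{m_0,m_1}(T)\le(1-\mu)\tfrac{m_1}{2}$. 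That exclusion argument is essentially a rerun, at $\delta=0$, of the step inside the proof of Theorem~\ref{grossev} that forbids $r_v(x_3)=k_0\pi+\delta$; so your version correctly recognizes that the heavy lifting is already packaged in Theorem~D and avoids duplicating it, at the cost of being less self-contained. Your identification of where your argument could break --- if Theorem~D only gave control on compact intervals --- is exactly right, and it is also precisely the situation the paper's proof is designed to handle. One trivial slip: degree $\pm3$ corresponds to $\ell\in\{1,-2\}$ (giving $2\ell+1=\pm3$), not to $|\ell|=1$; this does not affect your proof since you pin down $\ell=0$ regardless.
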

\begin{proof}
Let $T$ be as in the proof of Theorem\,\ref{grossev}, $\epsilon>0$ and $\mu\in\lbrack\tfrac{1}{2},1)$.
Lemma\,\ref{stayclose} implies the existence of $v_0\in\R$ such that $v\geq v_0$ yields $\rho_v(x)<\epsilon$ for $d_{m_0,m_1}^{-}\leq x\leq T$. Thus
\begin{align}
\label{aa}
\lvert\varphi_v(x)\lvert<\epsilon\hspace{0.1cm}\mbox{for}\hspace{0.1cm}v\geq v_0\hspace{0.1cm}\mbox{and}\hspace{0.1cm}d_{m_0,m_1}^{-}\leq x\leq T.
\end{align}
Furthermore, the proof of Theorem\,\ref{grossev} yields $W_{m_0,m_1}^{r_v}(T)\geq \mu\frac{m_1}{2}$ for all $v\geq v_0$ and
\begin{align}
\label{ab}
W_{m_0,m_1}^{r_v}(x)-W_{m_0,m_1}^{r_v}(T)\leq (1-\mu)\tfrac{m_1}{2}\hspace{0.1cm}\mbox{for all}\hspace{0.1cm}x\geq T, v\geq v_0.
\end{align}
In what follows we may assume $\epsilon<\pi$. Hence by combining inequality (\ref{aa}) and Lemma\,\ref{streifen} we have that each solution $r_v$ of the $(m_0,m_1)$-BVP with $v\geq v_0$ satisfies either
(i) $\lim_{x\rightarrow\infty}r_v(x)=\tfrac{3\pi}{2}$ or (ii) $\lim_{x\rightarrow\infty}r_v(x)=-\tfrac{\pi}{2}$ or (iii) $\lim_{x\rightarrow\infty}r_v(x)=\tfrac{\pi}{2}$.

\smallskip

Below we prove that the first two cases cannot occur if the initial velocity is chosen big enough: let $v\geq v_0$ be given.
By (\ref{aa}) the choice $\epsilon=\frac{\pi}{2}$ yields $0< r_v(T)<\pi$. Assume either (i) or (ii). In each of theses cases there exist $x_2\geq T$ and $k_0\in\lbrace 0,1\rbrace$ such that $r_v(x_2)=k_0\pi$. 
By Lemma\,\ref{increase} we get $\frac{1}{2}r_v'(x_2)^2=W_{m_0,m_1}^{r_v}(x_2)\geq W_{m_0,m_1}^{r_v}(T)\geq\mu\frac{m_1}{2}$. Hence
\begin{align*}
\tfrac{d}{dx}W_{m_0,m_1}^{r_v}(x_2)=\alpha_{m_0,m_1}(x_2)r_v'(x_2)^2\geq\mu\tfrac{m_1(m_1-1)}{2}.
\end{align*}
Lemma\,\ref{bound} and Lemma\,\ref{bound2} imply that the absolute value of the second derivative of $W^{r_v}_{m_0,m_1}$ is bounded.
Consequently, there exists an point $x_3>x_2$ which depends on $\mu$, $m_0$ and $m_1$ only such that
\begin{align*}
\tfrac{d}{dx}W_{m_0,m_1}^{r_v}(x)\geq\mu\tfrac{m_1(m_1-1)}{4} 
\end{align*}
for all $x\in\left[x_2,x_3\right]$. 
Thus integrating yields
\begin{align*}
W_{m_0,m_1}^{r_v}(x_3)-W_{m_0,m_1}^{r_v}(x_2)\geq \mu\tfrac{m_1(m_1-1)}{4}(x_3-x_2)>0. 
\end{align*}
Therefore we have
\begin{align*}
W_{m_0,m_1}^{r_v}(x_3)-W_{m_0,m_1}^{r_v}(T) &\geq W_{m_0,m_1}^{r_v}(x_2)-W_{m_0,m_1}^{r_v}(T)+\mu\tfrac{m_1(m_1-1)}{4}(x_3-x_2)\\
&\geq \mu\tfrac{m_1(m_1-1)}{4}(x_3-x_2)>0.
\end{align*}
Clearly, there exists a constant $c\in\R$ such that $x_3-x_2>c>0$ for all $\mu\in\lbrack\tfrac{1}{2},1)$.
If we choose $\mu$ sufficiently near to $1$ we thus obtain a contradiction to (\ref{ab}).
\end{proof}

\section*{References}

\bibliography{mybibfile}

\nocite{*}

\end{document}